\documentclass[12pt]{amsart}

\usepackage[margin = 1in]{geometry}

\usepackage{tikz}
\usepackage{tikz-cd}
\usepackage{url, hyperref}
\usepackage{float}

\tikzset{->-/.style={decoration={
  markings,
  mark=at position .45 with {\arrow{>}}},postaction={decorate}}}
  
\usetikzlibrary{shapes.arrows}
\usetikzlibrary{decorations.pathreplacing}
\usepackage[all]{xy}

\tikzset{->-/.style={decoration={
  markings,
  mark=at position .45 with {\arrow{>}}},postaction={decorate}}}

\usepackage{amsmath}
\usepackage{amssymb}
\usepackage{enumitem}
\usepackage{graphicx}
\usepackage{mathdots}
\usepackage{color}
\usepackage{diagbox}
\usepackage{array, makecell}
\usepackage{rotating}
\usepackage{amsmath}
\usepackage{tikz-cd}

\def\cD{\mathcal{D}}
\def\cV{\mathcal{V}}
\def\cW{\mathcal{W}}

\def\cO{\mathcal{O}}

\def\cM{{\mathcal{M}}}
\def\barQ{\overline{Q}}

\def\Z{\mathbb{Z}}
\def\C{\mathbb{C}}

\def\b1{{\bf 1}}

\def\E{\mathsf{E}}

\def\H{\mathsf{H}}

\def\P{\mathbb{P}}
\def\bP{\mathbb{P}}

\def\vir{{\mathsf{vir}}}
\def\cP{\mathcal{P}}

\def\uW{\underline{W}}

\def\cE{\mathcal{E}}

\def\cL{\mathcal{L}}

\def\barP{\overline{\mathcal{P}}}

\newcount\colveccount
\newcommand*\colvec[1]{
        \global\colveccount#1
        \begin{pmatrix}
        \colvecnext
}
\def\colvecnext#1{
        #1
        \global\advance\colveccount-1
        \ifnum\colveccount>0
                \\
                \expandafter\colvecnext
        \else
                \end{pmatrix}
        \fi
}

\usepackage{amsthm}
\newtheorem{definition}{Definition}[subsection]
\newtheorem{theorem}[definition]{Theorem}
\newtheorem{example}[definition]{Example}
\newtheorem{proposition}[definition]{Proposition}
\newtheorem{ingredient}[definition]{Ingredient}
\newtheorem{corollary}[definition]{Corollary}
\newtheorem{lemma}[definition]{Lemma}
\newtheorem{reduction}[definition]{Reduction}

\newtheorem{conjecture}[definition]{Conjecture}
\newtheorem{question}[definition]{Question}

\newtheorem{data}[definition]{Data}

\newcommand{\Pic}{\mathsf{Pic}}
\newcommand{\Sym}{\mathsf{Sym}}

\newcommand{\Gr}{\mathsf{Gr}}
\newcommand{\Fl}{\mathsf{Fl}}

\newcommand{\cI}{\mathcal{I}}
\newcommand{\cJ}{\mathcal{J}}

\newcommand{\Bl}{{\mathsf{Bl}}}
\newcommand{\Inc}{{\mathsf{Inc}}}

\newcommand{\Tev}{{\mathsf{Tev}}}

\newcommand{\Coll}{{\mathsf{Coll}}}
\newcommand{\Hom}{{\text{Hom}}}

\newcommand{\im}{\text{im}}

\newcommand{\rank}{\text{rank}}
\DeclareMathOperator{\coker}{\text{coker}}

\DeclareMathOperator{\expdim}{expdim}
\DeclareMathOperator{\Supp}{Supp}

\newcommand{\ev}{{\text{ev}}}

\newcommand{\barM}{{\overline{\mathcal{M}}}}

\usepackage{amsmath,calligra,mathrsfs}
\DeclareMathOperator{\cHom}{\mathscr{H}\text{\kern -3pt {\calligra\large om}}\,}

\newcommand{\wtQ}{\widetilde{Q}}

\def\cE{\mathcal{E}}

\def\cL{\mathcal{L}}

\def\barD{\overline{D}}
\def\barE{\overline{\mathsf{E}}}
\def\bark{\overline{k}}

\title{Complete quasimaps to $\Bl_{\P^s}(\P^r)$}

\author{Alessio Cela}
\address{ University of Cambridge, Department of pure mathematics and mathematical statistics
\hfill \newline\texttt{}
 \indent Centre for Mathematical Sciences, Wilberforce Road Cambridge, UK} \email{{\tt ac2758@cam.ac.uk}}

 \author{Carl Lian}
\address{Washington University in St. Louis, Department of Mathematics, 1 Brookings Drive
\hfill \newline\texttt{}
 \indent  St. Louis, MO 63130} \email{{\tt clian@wustl.edu}}

\date{\today}

\usepackage{graphicx}

\begin{document}

\maketitle

\begin{abstract}
We introduce a moduli space of ``complete quasimaps'' to $X=\Bl_{\P^s}(\P^r)$. The construction, following previous work for curves on projective spaces, essentially proceeds by blowing up Ciocan-Fontanine--Kim's space of quasimaps at loci where sections of line bundles are linearly dependent. We conjecture that tautological intersection numbers on these moduli spaces give enumerative counts of curves of fixed complex structure on $X$ subject to general incidence conditions, in contrast with traditional compactifications of the moduli spaces of maps. A result of Farkas guarantees that these spaces are pure of expected dimension. The conjecture is proven in dimension 2, where the main input is a Brill-Noether theorem for general curves on toric surfaces.
\end{abstract}

\setcounter{tocdepth}{1}

\tableofcontents

\section{Introduction}

\subsection{Counting curves}

Let $X$ be a geometric space, such as a complex projective variety. How many curves $C$ lie in $X$, perhaps subject to incidence conditions of the form $C\ni x$, where $x\in X$ is a fixed point? Traditionally, such problems are solved via intersection theory on a compact moduli space $\cM$ of maps $f:C\to X$, such as the moduli space of stable maps in Gromov-Witten theory. Incidence conditions are encoded by closed subvarieties $\cM_x\subset \cM$, and the question of understanding the intersection of these closed subvarieties is converted into an integral of the product of their corresponding cohomology classes. 

However, it is often the case that the intersection in question fails to be transverse, owing to excess intersection at the boundary of $\cM$. As such, the cohomological calculation produces only a \emph{virtual count}, which typically enumerates not only maps $f:C\to X$ out of a smooth curve, but rather includes contributions from a possibly infinite set of maps $f:C\to X$ with singular domain. Isolating only the former maps is difficult.

This paper is concerned with confronting this problem. Namely, we aim to construct compact moduli spaces of maps to $X$ on which integrals of products of cycle classes corresponding to incidence conditions are \emph{enumerative}, that is, give an actual count of smooth curves on $X$. Let us now state the precise motivating problem.

\begin{definition}\label{def:tev}
Let $X$ be a smooth, projective variety. Let $\cM_{g,n}(X,\beta)$ be the moduli space of maps $f:C\to X$ from a smooth, pointed curve $(C,p_1,\ldots,p_n)$ such that $f_{*}[C]=\beta\in H_2(X,\Z)$. Assume
\begin{equation}\label{dim_constraint}
    \beta\cdot K_X^\vee=\dim(X)\cdot (n+g-1),
\end{equation}
and furthermore that every component of $\cM_{g,n}(X,\beta)$ dominating the target under
\begin{equation*}
    \tau:\cM_{g,n}(X,\beta)\to \cM_{g,n}\times X^n
\end{equation*}
is generically smooth of expected dimension. Then, the \emph{geometric Tevelev degree} $\Tev^X_{g,n,\beta}$ of $X$ is by definition the degree of $\tau$.
\end{definition}

The morphism $\tau$ is defined by $\tau(f)=([(C,p_1,\ldots,p_n)],f(p_1),\ldots,f(p_n))$. The numerical hypothesis \eqref{dim_constraint} amounts to the assumption that $\tau$ has expected relative dimension zero. The transversality hypothesis is guaranteed when $n\ge g+1$ by \cite[Proposition 13]{lian_pand}. We assume that $2g-2+n>0$ so that $\cM_{g,n}$ is Deligne-Mumford.

Given a fixed, general curve $(C,p_1,\ldots,p_n)$ and general points $x_1,\ldots,x_n\in X$, the geometric Tevelev degree $\Tev^X_{g,n,\beta}$ counts the number of maps $f:C\to X$ in class $\beta$ with $f(p_i)=x_i$ for all $i$. One can more generally impose incidence conditions with respect to higher-dimensional subvarieties of $X$. The anachronistic term ``Tevelev degree'' was introduced in \cite{cps} after work of Tevelev \cite{tev}, but in fact, \emph{virtual} formulations of the question go back to the study of Vafa-Intriligator formulas in the mathematical literature, see for example \cite{bdw,st,mo} or \cite{bp,Cela2023} for more recent work.

The moduli space of maps $\cM_{g,n}(X,\beta)$ is essentially never proper, so cannot support an intersection-theoretic calculation that computes $\Tev^X_{g,n,\beta}$. The principal difficulty, then, is that standard compactifications of $\cM_{g,n}(X,\beta)$, such as the space $\barM_{g,n}(X,\beta)$ of stable maps (or, as will be more pertinent here, the spaces of quasimaps), may introduce extra points in the generic fiber of $\tau$, so the enumerative counts $\Tev^X_{g,n,\beta}$ are not immediately accessible. See \cite{lian_pand,bllrst} for detailed investigations into this phenomenon, and in particular \cite[Example 1.3]{bllrst} for the simplest example of the failure of enumerativity. As a result, many existing calculations of $\Tev^X_{g,n,\beta}$ are restricted to large degree, e.g. \cite{lian_hyp,cl2}.

Therefore, alternate compactifications of moduli spaces of maps are needed to compute $\Tev^X_{g,n,\beta}$ and related invariants. In this paper, we construct a candidate compactification $\wtQ_{\beta}(C,X)$ of the moduli of maps to the blow-up $X=\Bl_{\P^s}(\P^r)$ of $\P^r$ along a linear subspace, which we call the moduli space of \emph{complete quasimaps}.

\begin{conjecture}\label{conj:main_intro}
Let $X=\Bl_{\P^s}(\P^r)$, and let $C$ be a general curve of genus $g$. Let $\wtQ_\beta(C,X)$ be the moduli space of complete quasimaps $u:C\to X$. If $n\ge r+1$, then
\begin{equation*}
    \Tev^X_{g,n,\beta}=\int_{\wtQ_\beta(C,X)}[\Inc(p,x)]^n,
\end{equation*}
where $\Inc(p,x)\subset \wtQ_\beta(C,X)$ is the closure of the locus of non-degenerate maps $f$ with $f(p)=x$.
\end{conjecture}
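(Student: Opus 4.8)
The plan is to prove the identity by establishing \emph{enumerativity} directly. Fix general points $p_1,\dots,p_n\in C$ and general $x_1,\dots,x_n\in X$. I would show that the scheme-theoretic intersection $\Inc(p_1,x_1)\cap\cdots\cap\Inc(p_n,x_n)$ inside $\wtQ_\beta(C,X)$ is a finite, reduced collection of points, each of which lies in the open locus parametrizing honest (non-degenerate, base-point-free) maps $f\colon C\to X$. Granting this, the integral $\int_{\wtQ_\beta(C,X)}[\Inc(p,x)]^n$ literally counts the maps $f\colon C\to X$ in class $\beta$ with $f(p_i)=x_i$ for all $i$, and this is exactly $\Tev^X_{g,n,\beta}$ by Definition \ref{def:tev}. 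The dimension bookkeeping is consistent: the numerical constraint \eqref{dim_constraint} forces $\dim\wtQ_\beta(C,X)=n\dim X$, while each $\Inc(p_i,x_i)$ has codimension $\dim X$ (the condition $f(p_i)=x_i$ imposes $\dim X$ equations), so the $n$ incidence loci cut out a zero-dimensional intersection.

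The first task is to stratify the boundary of $\wtQ_\beta(C,X)$ complementary to the honest locus into locally closed pieces indexed by the degeneration type: the degree and support of the base divisor of the quasimap, together with the rank-drop data of the two linear systems (the sections defining the map to $\P^r$, and the sub-system cutting out the $\P^{r-s-1}$ factor of $X\subset\P^r\times\P^{r-s-1}$) that are recorded by the iterated blow-ups in the construction of $\wtQ_\beta(C,X)$. Because $\wtQ_\beta(C,X)$ is pure of expected dimension by Farkas's theorem, each stratum has codimension equal to the number of conditions its degeneration imposes. I would then bound, stratum by stratum, the dimension of the locus of degenerate quasimaps compatible with a single incidence condition, and deduce that for general $x_i$ no boundary stratum meets all $n$ of the $\Inc(p_i,x_i)$ at once.

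The main obstacle, and the crux of the argument, is precisely this boundary dimension estimate, and it is where a Brill-Noether theorem for general curves on the toric surface $X$ enters. On a degenerate quasimap the incidence conditions $f(p_i)=x_i$ translate into the requirement that prescribed linear series on $C$ --- cut out from the sections of the pulled-back line bundles after removing the base divisor and restricting to the rank-dropped subspaces --- pass through fixed points; the Brill-Noether theorem bounds the dimensions of such series on a general $C$, forcing the degenerate loci to be too small to absorb $n$ incidence conditions once $n\ge r+1$. The hypothesis $n\ge r+1$ is what rigidifies the map to the $\P^r$ factor: with at least $r+1$ general incidence conditions the evaluation of the defining sections at the $p_i$ is forced to have maximal rank, so a quasimap carrying a nontrivial base divisor cannot satisfy all conditions for general $x_i$. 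I expect the most delicate point to be the bookkeeping of the rank-drop strata introduced by the blow-ups, and checking that the excess intersection they would otherwise contribute is genuinely resolved away rather than merely relabeled.

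Finally, at each honest map $f$ in the intersection I would verify transversality by identifying the tangent space to $\Inc(p_i,x_i)$ with the kernel of the evaluation $\ev_{p_i}\colon H^0(C,f^*TX)\to T_{x_i}X$, so that reducedness of $\Inc(p_1,x_1)\cap\cdots\cap\Inc(p_n,x_n)$ at $f$ amounts to surjectivity of the total evaluation $H^0(C,f^*TX)\to\bigoplus_{i=1}^n T_{x_i}X$. For a general curve $C$ this interpolation statement is again governed by the toric Brill-Noether input in dimension $2$. Combining the emptiness of the boundary intersection with this transversality yields the claimed equality $\Tev^X_{g,n,\beta}=\int_{\wtQ_\beta(C,X)}[\Inc(p,x)]^n$.
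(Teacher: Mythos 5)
The statement you are proving is stated in the paper as a \emph{conjecture}; the paper proves it only in dimension $2$, for $X=\Bl_q(\P^2)$ (Theorem \ref{thm:main_intro}), and explicitly identifies what is missing in higher dimension. Your outline does follow the general shape of the paper's strategy --- stratify $\wtQ_\beta(C,X)$ by bi-rank and base-divisor data, show each stratum meets the incidence loci in expected codimension, and conclude that the intersection is a reduced finite set of honest non-degenerate maps --- but the two places where you defer to ``a Brill--Noether theorem for general curves on the toric surface $X$'' are exactly where the content lies, and what is needed is stronger than what you invoke. The key boundary estimate is not that the degenerate strata of $\wtQ_\beta(C,X)$ are small (purity of the ambient space does \emph{not} give each stratum its expected codimension), but that the loci of inclusions of linear series whose sections have prescribed extra vanishing along subdivisors of $p_1+\cdots+p_n$ have expected codimension. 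In dimension $2$ the paper proves this (Propositions \ref{prop:g_x2_expdim} and \ref{prop:g_x'1_expdim}) by identifying these loci with spaces of maps to \emph{further} toric blow-ups $X_2$ and $X_1'$ of $\P^2$ and applying the Severi-variety-based Theorem \ref{thm:BN_surface}; in higher dimension the analogous pointed/secant statement for $G^{r,r-s-1}_{d,k}(C)$ is open (Question \ref{question:pointed_BN_secant}), and even the generic smoothness of $\cM^\circ_\beta(C,X)$ --- the $n=0$ case of your final transversality step --- is not known. So your proposal cannot close in general; at best it reproduces the skeleton of the dimension-$2$ proof while omitting its substance.

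Two further genuine errors. First, you assign the hypothesis $n\ge r+1$ the role of rigidifying the quasimap and forcing base divisors to vanish; that is not its role. Excess intersections from low bi-rank quasimaps occur for arbitrarily large $n$ (Example \ref{eg:excess} exhibits them on $Q^+_\beta(C,X)$ for all $n\ge 5$); they are eliminated by the blow-up together with the Brill--Noether estimates, not by the incidence conditions. The hypothesis $n\ge r+1$ only guarantees that every honest map counted by $\Tev^X_{g,n,\beta}$ is non-degenerate, so that nothing is lost by compactifying only $\cM^\circ_\beta(C,X)$. (The mechanism you describe --- many incidence conditions forcing maximal rank of evaluation and killing base divisors --- is the paper's \emph{large-degree} argument, Proposition \ref{tev_P}, valid under the much stronger hypothesis $n>d+1$.) Second, your reducedness criterion via surjectivity of $H^0(C,f^{*}T_X)\to\bigoplus_{i}T_{x_i}X$ is itself an unproven interpolation statement beyond dimension $2$; the paper instead deduces generic smoothness of the intersection from generic smoothness of the Severi-type moduli space together with the incidence-correspondence argument, which again is available only for surfaces.
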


A non-degenerate map is one whose image does not lie in any hyperplane, see Definition \ref{def:nondegen}. The mild hypothesis $n\ge r+1$ ensures that the maps being counted are automatically non-degenerate. Thus, the conjecture predicts that geometric Tevelev degrees of $X=\Bl_{\P^s}(\P^r)$ are given by tautological intersection numbers on $\wtQ_\beta(C,X)$ (and in particular well-defined). See Conjecture \ref{conj:main} for a more general statement. We prove the conjecture in the first interesting case.

\begin{theorem}\label{thm:main_intro}
Conjecture \ref{conj:main_intro} holds in dimension 2, for $X=\Bl_q(\P^2)$.
\end{theorem}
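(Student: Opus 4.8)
The moduli space $\wtQ_\beta(C,X)$ is proper (being a blow-up of the proper Ciocan-Fontanine--Kim quasimap space), and by the result of Farkas cited above it is pure of expected dimension, so $\int_{\wtQ_\beta(C,X)}[\Inc(p,x)]^n$ is a well-defined integer. The plan is to show that, for a general choice of points $x_1,\dots,x_n\in X$, the incidence loci $\Inc(p_1,x_1),\dots,\Inc(p_n,x_n)$ meet transversally in a finite set of reduced points, all of which lie in the interior locus of honest non-degenerate maps $f\colon C\to X$ and constitute precisely the fiber of $\tau$ over $([(C,p_1,\dots,p_n)],x_1,\dots,x_n)$. Granting this, the integral equals the degree of $\tau$, which is $\Tev^X_{g,n,\beta}$ by Definition \ref{def:tev}; here the hypothesis $n\ge 3$ guarantees that every map in the fiber is non-degenerate, hence lies in the interior of each $\Inc(p_i,x_i)$.

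First I would make the interior count explicit in Cox coordinates. Writing $\beta=dH-mE$ with $H^2=1$, $H\cdot E=0$, $E^2=-1$, so that the numerical constraint reads $3d-m=2(n+g-1)$, an honest map $f\colon C\to \Bl_q(\P^2)$ in class $\beta$ is the datum of two line bundles $A$ (of class $E$, degree $m$) and $B$ (of class $H-E$, degree $d-m$) on $C$, together with sections $s_1,s_2\in H^0(C,B)$, $s_3\in H^0(C,A\otimes B)$, and $s_4\in H^0(C,A)$, subject to the stability conditions that $(s_1,s_2)$ and $(s_3,s_4)$ each vanish only at finitely many points, taken modulo the torus $(\C^*)^2$. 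Composing with the blow-down $X\to\P^2$ gives $\bar f=[s_1s_4:s_2s_4:s_3]\colon C\to\P^2$, and the exceptional divisor is met exactly along $\mathrm{div}(s_4)$, of degree $m$. A short computation shows this space of maps has dimension $3d-m-2g+2=2n$, matching the codimension $2n$ of the $n$ point conditions; and for $x_i\notin E$ general, the condition $f(p_i)=x_i$ is two independent linear conditions on $(s_1(p_i),s_2(p_i),s_3(p_i),s_4(p_i))$.

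The crux is to prove that the intersection $\bigcap_i\Inc(p_i,x_i)$ avoids the boundary of $\wtQ_\beta(C,X)$. The boundary consists of the quasimap strata carrying base points, together with the exceptional divisors introduced by blowing up the loci where the relevant sections become linearly dependent. I would estimate, stratum by stratum, the dimension of the locus of boundary points satisfying a prescribed subset of the incidence conditions, and show that it drops faster than the naive codimension of each condition predicts. Because the classes $[\Inc(p_i,x_i)]$ are pulled back along $\tau$ from general points of $X^n$, a Kleiman-type general-position argument then removes the boundary from the intersection once these stratum dimensions are under control. This dimension bookkeeping on the iterated blow-up $\wtQ_\beta(C,X)$ is where I expect the principal technical difficulty to lie.

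Finally, with the boundary excluded, the integral counts honest maps, and it remains to verify that this interior count is finite, reduced, and transverse. This is the step that consumes the Brill--Noether theorem for general curves on the toric surface $X$: for a general curve $C$, it guarantees that the loci of pairs $(A,B)$ admitting sections with the prescribed incidence and vanishing behavior have the expected dimension and meet transversally, so that the scheme-theoretic fiber of $\tau$ is reduced of the predicted length. The subtlety here, beyond the single-line-bundle situation familiar from $\P^r$, is that two line bundles $A$ and $B$ are constrained simultaneously through the product $A\otimes B$ governing $\bar f$; controlling their joint Brill--Noether behavior, rather than that of a single linear series, is the second place where I anticipate the argument to require real work.
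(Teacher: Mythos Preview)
Your high-level architecture---stratify the boundary, bound dimensions stratum by stratum, conclude via an incidence-correspondence argument that general incidence loci meet only in the interior---matches the paper's. But two points are misplaced in a way that would derail the argument if carried out as written.

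First, $\wtQ_\beta(C,X)$ is \emph{not} a blow-up of the Ciocan--Fontanine--Kim quasimap space $\barQ_\beta(C,X)$; it is a blow-up of the \emph{augmented} quasimap space $Q^+_\beta(C,X)$, which is a tower of projective bundles over the moduli of inclusions of linear series $G^{2,0}_{d,k}(C)$. This matters: $\barQ_\beta(C,X)$ can have components of larger than expected dimension (Example \ref{eg:bad_components}), so ``Farkas implies pure of expected dimension'' is false for it. The Farkas input is that $G^{2,0}_{d,k}(C)$ has expected dimension, and the purity of $\wtQ_\beta(C,X)$ follows because $Q^+_\beta(C,X)\to G^{2,0}_{d,k}(C)$ is a projective bundle tower.

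Second, you have the role of the surface Brill--Noether theorem backwards. The \emph{interior} transversality is essentially automatic once the stratum dimension estimates are in hand; the surface Brill--Noether theorem (Theorem \ref{thm:BN_surface}) is what controls the \emph{boundary} strata. Concretely, on the bi-rank $(1,2)$ strata and on the exceptional divisor of the blow-up, a complete quasimap with $\pi(u)$ bpf determines, after recording the extra vanishing divisor of the distinguished section $v$, a point of a moduli space $G^\circ_{d,(k_1,k_2)}(C)$ or $G'^{\circ}_{d,(k,\bar k)}(C)$ governing non-degenerate maps to the toric surfaces $X_2=\Bl_{2\text{ pts}}(\P^2)$ or $X'_1=\Bl_{\text{pt}\in E}(\Bl_q\P^2)$. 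Theorem \ref{thm:BN_surface}, via Lemmas \ref{lem:x2_deg3} and \ref{lem:x'1_deg3}, shows these auxiliary spaces have expected dimension, which is exactly the input needed to bound the substrata $Q_{k_2},Q_{k'}$ where the incidence conditions impose fewer than two constraints apiece. Without this, your stratum-by-stratum dimension count has no teeth. A further ingredient you omit is an explicit set-theoretic description of $\Inc(p,x)$ on the exceptional divisor (Proposition \ref{prop:inc_exceptional}), which is how one knows what conditions each $\Inc(p_i,x_i)$ actually imposes on the strata $E^\circ,E^V,E^W$.
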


The surface $X=\Bl_q(\P^2)$ is the only Hirzebruch surface for which $\Tev^X_{g,n,\beta}$ is not fully determined by the Tevelev degrees of $\P^1$ \cite{cl_hirzebruch}, see also \cite{cil} for related calculations. Conjecture \ref{conj:main_intro} and Theorem \ref{thm:main_intro} should be viewed as transversality statements: the closed subvarieties of $\wtQ_\beta(C,X)$ corresponding to the constraints $f(p_i)=x_i$ intersect transversely, and only in the open locus of honest maps $f:C\to X$ in class $\beta$. Our complete quasimaps construction builds on a previous construction for $X=\bP^r$ \cite{lian_pr}, and suggests a path forward for many other targets $X$. We discuss this in the next section.

\subsection{Brill-Noether theorems and complete quasimaps}

We propose the following basic approach to access the enumerative counts $\Tev^X_{g,n,\beta}$.

\begin{enumerate}
\item Fix a general curve $C$, and begin with the space of quasimaps $\barQ_\beta(C,X)$, defined for GIT quotients $X$ of an affine variety by a reductive group \cite{cfkm}, including Grassmannians \cite{mop}, toric varieties \cite{cfk}, and hypersurfaces.
\item Prove a Brill-Noether theorem for (quasi)maps $f:C\to X$. That is, identify conditions under which $f:C\to X$ necessarily deforms in a family of expected dimension.
\item Blow up the loci on $\barQ_\beta(C,X)$ where the conditions of the previous step are \emph{not} satisfied, and prove that tautological intersection numbers of the blown up moduli space are enumerative.
\end{enumerate}

Strictly speaking, we do not follow this program exactly in this paper. Rather, we start with a moduli space of \emph{augmented quasimaps} $Q^+(C,X)$ refining the moduli space of quasimaps, which is already known to be of expected dimension. Then, we blow up the space $Q^+(C,X)$. We expect that our moduli spaces can be equivalently obtained by blowing up the ordinary space of quasimaps, having the effect of blowing up components of larger than expected dimension ``out of existence,'' but do not give a proof. We will ignore this distinction in the remainder of the introduction.

This program has been fully carried out for $X=\P^r$ in \cite{lian_pr}; we give a brief review. Writing $\beta=d$ for the class of $d$ times a line, the space of quasimaps $\barQ_d(C,\P^r)$ parametrizes $(r+1)$-tuples of sections $[f_0:\cdots:f_r]$ of a degree $d$ line bundle on $C$. The classical Brill-Noether theorem implies that these data move in a family of expected dimension as long as $f_0,\ldots,f_r$ are linearly independent. Geometrically, the moduli space of (quasi)maps to $f:C\to \P^r$ has expected dimension near any $f$ whose image does not lie in any hyperplane. Therefore, the blow-ups in step (3) are at the loci on $\barQ_d(C,\P^r)$ where the $f_j$ are dependent. The resulting moduli space $\wtQ_d(C,\P^r)$ is fibered over the moduli space $G^r_d(C)$ of linear series on $C$, and the fibers are moduli spaces of \emph{complete collineations} \cite{vainsencher,thaddeus}.

The blow-up has the effect that excess intersections of loci corresponding to the imposed incidence conditions are resolved. Indeed, it is proven \cite[Theorem 1.7, \S 4.3]{lian_pr} that
\begin{equation*}
    \Tev^{\P^r}_{g,n,d}=\int_{\wtQ_d(C,\P^r)}[\Inc(p,x)]^n,
\end{equation*}
where $\Inc(p,x)\subset \wtQ_d(C,\P^r)$ is the closure of the locus of non-degenerate maps $f$ with $f(p)=x$. The integral on the right hand side is computed by degeneration techniques, generalizing the geometric calculations of \cite{tev,cps,fl,cl1} for $r=1$, and refining the virtual calculations of \cite{bdw,st,mop,bp} for projective spaces.

The starting point of this paper is an analog of the Brill-Noether theorem for maps $f:C\to X=\Bl_{\P^s}(\P^r)$, which follows from a result of Farkas \cite[Theorem 0.1]{farkas}. Namely, $f$ moves in a family of expected dimension as long as its image does not lie in any hyperplane upon blow-down to $\P^r$, see Corollary \ref{cor:maps_expdim}. Therefore, the space of quasimaps $\barQ_\beta(C,X)$ is blown up along the loci where the sections defining $f$ are linearly dependent, see \S\ref{sec:complete} for the precise definition. A new phenomenon in the case $X=\Bl_{\P^s}(\P^r)$ is the need to keep track of a subspace of sections corresponding to a secant of $C$ in the course of the blow-ups. Our space of complete quasimaps $\wtQ_\beta(C,X)$ is the resulting iterated blow-up. \footnote{Strictly speaking, the blow-ups are performed relatively to the moduli space of inclusions of linear series, see Definition \ref{def:G}. This way, the moduli space of \emph{augmented quasimaps} (\S\ref{sec:augmented}) which we start with already has expected dimension. }

A key feature of our construction of $\wtQ_\beta(C,X)$ is that it is uniform: the blow-up loci make no reference to the points of $C,X$ at which incidence conditions are imposed. While it true for abstract reasons that excess intersections can always be resolved by sufficiently many blow-ups, the content of Conjecture \ref{conj:main_intro} is that there is a single sequence of blow-ups that suffices for all enumerative problems in question.

As we will see, the main obstruction to proving Conjecture \ref{conj:main_intro} in full generality is that, in order to ensure that unwanted intersections in the boundary are fully resolved, we need slightly stronger Brill-Noether statements for maps $f:C\to X$, roughly that additional vanishing conditions on the underlying sections impose the expected number of conditions. When $X=\P^r$, the \emph{pointed} Brill-Noether theorem suffices. When $X=\Bl_q(\P^2)$, we invoke a Brill-Noether theorem for maps to further (toric) blow-ups of $X$ developed in our work \cite{cl_bn_surface}, see \S\ref{sec:surface_BN}-\ref{sec:toric_surface_BN}. In higher dimensions, a suitable analog is missing, but we have raised the question of establishing Brill-Noether theorems for maps to toric varieties in \cite[Question 1.5]{cl_bn_surface}.

We defer calculations of integrals on our spaces $\wtQ_\beta(C,X)$ to later work. As we will see, our construction in dimension 2 is sufficiently concrete, involving degeneracy loci of expected dimension inside well-understood moduli spaces, that a direct approach is plausible. In higher dimension, degeneration techniques along the lines of \cite{lian_pr} may be more robust.

We expect constructions of compactified moduli spaces of maps and enumerative calculations in this vein to be possible for other target varieties. Step (2) above is also of independent interest for many other varieties $X$.

The roadmap of the paper is as follows. The moduli spaces in question are constructed in \S\ref{sec:moduli_spaces}. Theorem \ref{thm:main_intro} is proven in \S\ref{sec:set-theoretic}-\ref{sec:dim2_proof}. We discuss how to extend the method of proof (and what is missing) to higher dimemsion in \S\ref{sec:higher_dim}. Finally, we discuss in \S\ref{sec:other_spaces} the relationship between our new space $\wtQ_\beta(C,X)$ and other compactifications of moduli of maps $C\to X$, including the moduli spaces used in our previous work on \emph{large degree} counts of curves on $X=\Bl_q(\P^r)$ \cite{cl2}.

\subsection{Conventions}

\begin{itemize}
\item We work over $\C$.
\item The projective bundle $\P(\cE)$ is the moduli space of \emph{lines} in the fibers of $\cE$.
\item Angle brackets $\langle - \rangle$ denote linear span.
\item The support of an effective divisor $D\subset C$ on a curve is denoted $\Supp(D)$. 
\item If $X$ is a projective variety and $D$ is an effective Cartier divisor, a `defining section'' $s\in H^0(X,\cO(D))$ is one with $\text{div}(s)=D$.
\end{itemize}

\subsection{Acknowledgments}
Portions of this work were carried out during the authors' visits at ETH Z\"{u}rich, University of Illinois at Urbana-Champaign, Tufts University, and University of Cambridge. We are grateful to these institutions for their hospitality. We thank Gavril Farkas, Alina Marian, Davesh Maulik, George McNinch, Rahul Pandharipande, Dhruv Ranganathan, Montserrat Teixidor i Bigas, and Richard Thomas for conversations related to this work. C.L. has been supported by NSF Postdoctoral Fellowship DMS-2001976 and an AMS-Simons travel grant. A.C. is supported by SNF-P500PT-222363.

\section{Moduli spaces}\label{sec:moduli_spaces}

\subsection{Maps}\label{sec:maps}

Fix, once and for all, a general curve $C$ of genus $g$, by which we mean a general point $[C]\in \cM_g$. In particular, $C$ is smooth, projective, and connected. While our constructions will be valid in families of curves, our transversality statements will only hold for fixed, general curves. We thus stick to the setting of a fixed, general curve throughout this paper.

Let $X=\Bl_{\P^s}(\P^r)$ be the blow-up of $\P^r$ along a linear subspace $P\cong\P^s$ of dimension $s\le r-2$.  Let $g:X\to\P^{r-s-1}$ be the projection from $P$, let $b:X\to\P^r$ be the blow-up, and let $E\subset X$ be the exceptional divisor. Without loss of generality, we may assume that $P\subset\P^s$ is torus-invariant, so that $X$ is toric. Write $\H,\E\in H^2(X)$ for the divisor classes of the pullback of the hyperplane from $\P^r$ and the exceptional divisor, respectively. 

Let $\beta\in H_2(X,\Z)$ be a curve class. From the point of view of enumerative problems, the $\beta$ of interest are classes of irreducible, effective curves contained generically in the (toric) interior of $X$. We thus consider only non-zero classes $\beta=d\H^\vee+k\E^\vee$, where $d\ge k\ge 0$. The image under $b$ of a curve in $X$ in class $\beta$ has degree $d$, and intersects $P$ with multiplicity $k$. We fix such a class $\beta$ throughout this section.

\begin{definition}
    Let $\cM_\beta(C,X)$ be the moduli space of maps $f:C\to X$ with $f_{*}[C]=\beta$. We say that such an $f$ is ``in class $\beta$.'' The \emph{expected dimension} of $\cM_\beta(C,X)$ is
    \begin{equation*}
        \chi(C,f^{*}T_X)=\beta\cdot K_X^\vee - r(g-1)= (r+1)d-(r-s-1)k-r(g-1).
    \end{equation*}
\end{definition}

The expected dimension is a lower bound for the actual dimension of every component. A map $f\in\cM_\beta(C,X)$ is given by the following data. (See  \cite{cox} for a description of maps to any toric variety.)
\begin{data}\label{data:map_to_X}
\quad
\begin{itemize}
    \item line bundles $\cL_d\in \Pic^d(C)$ and $\cL_k\in \Pic^k(C)$,
    \item a section $u_0\in H^0(C,\cL_k)$
    \item sections $u_1,\ldots,u_{r-s}\in H^0(C,\cL_d\otimes \cL_k^{-1})$, and
    \item sections $u_{r-s+1},\ldots,u_{r+1}\in H^0(C,\cL_d)$.
\end{itemize}
Given fixed $\cL_d,\cL_k$, the sections $u_j$ are taken up to the $(\mathbb{C}^{*})^2$ action given by
\begin{equation*}
    (\lambda_1,\lambda_2)\cdot (u_0,u_1,\ldots,u_{r-s},u_{r-s+1},u_{r+1})=(\lambda_1u_0,\lambda_2u_1,\ldots,\lambda_2u_{r-s},\lambda_1\lambda_2u_{r-s+1},\ldots,\lambda_1\lambda_2u_{r+1}).
\end{equation*}
\end{data}

Data \ref{data:map_to_X} give rise to a map $f$ if and only if the following ``base-point-free'' conditions hold:
\begin{enumerate}
    \item[(BPF1)] $u_1,\ldots,u_{r-s}\in H^0(C,\cL_d\otimes \cL_k^{-1})$ have no common vanishing point on $C$,
    \item[(BPF2)] $u_0\in H^0(C,\cL_k)$ and $u_{r-s+1},\ldots,u_{r+1}\in H^0(C,\cL_d)$ have no common vanishing point on $C$.
\end{enumerate}

 Under conditions (BPF1) and (BPF2), the composition $g\circ f:C\to\P^{r-s-1}$ is given by the tuple $[u_1:\cdots:u_{r-s}]$, and the composition $b\circ f:C\to\P^r$ is given by the tuple
\begin{equation*}
    [u_0u_1:\ldots:u_0u_{r-s}:u_{r-s+1}:\ldots:u_{r+1}].
\end{equation*}
The sections $u_0,\ldots,u_{r+1}$ are pullbacks under $f:C\to X$ of particular torus-invariant sections on $X$. The section $u_0$ is the pullback of a section $1_E$ of $\cO_X(\E)$, the sections $u_1,\ldots,u_{r-s}$ are pullbacks of sections $y_1,\ldots,y_{r-s}$ of $\cO_X(\H-\E)$, and $u_{r-s+1},\ldots,u_{r+1}$ are pullbacks of sections $y_{r-s+1},\ldots,y_{r+1}$ of $\cO_X(\H)$. The sections $1_E,y_j$ are fixed throughout this paper.

The following example shows that $\cM_\beta(C,X)$ may have larger than expected dimension.

\begin{example}\label{eg:bad_components}
 The locus on $\cM_\beta(C,\Bl_{\P^1}(\P^3))$ where $u_{4}=0$ is isomorphic to $\cM_\beta(C,\Bl_{q}(\bP^{2}))$, which (if non-empty) has dimension \emph{at least} $3d-k-2g+2$. This exceeds $\expdim \cM_\beta(C,X)=4d-k-3g+3$ if $d<g-1$.
\end{example}
The non-emptiness of $\cM_\beta(C,\Bl_{q}(\bP^{2}))$ can be arranged by \cite[Theorem 0.5]{farkas}.

\subsection{Quasimaps}\label{sec:quasimaps}

In order to do intersection theory, we consider compactifications of $\cM_\beta(C,X)$. The starting point is the space of ($\epsilon$-stable) quasimaps of Ciocan-Fontanine--Kim \cite{cfk}.

\begin{definition}\label{def:quasimaps}
A \emph{quasimap} $u:C\to X$ \footnote{It is slightly abusive to write $u:C\to X$, as $u$ is not an actual map. We will reserve the letter $f$ for honest morphisms $f:C\to X$, and allow ourselves the abuse of notation in dealing with quasimaps.} in class $\beta$ is given by Data \ref{data:map_to_X} satisfying:
\begin{enumerate}
    \item[(NZ1)] $u_1,\ldots,u_{r-s}\in H^0(C,\cL_d\otimes \cL_k^{-1})$ are not all zero,
    \item[(NZ2)] $u_0\in H^0(C,\cL_k)$ and $u_{r-s+1},\ldots,u_{r+1}\in H^0(C,\cL_d)$ are not all zero.
\end{enumerate}
Let $\barQ_\beta(C,X)$ be the moduli space of quasimaps in class $\beta$. We say that $u\in \barQ_\beta(C,X)$ is \emph{bpf} if $u$ lies in the open subset $\cM_\beta(C,X)\subset \barQ_\beta(C,X)$ where conditions (BPF1), (BPF2) hold.
\end{definition}

$\barQ_\beta(C,X)$ is projective: it admits a natural morphism to $\Pic^d(C)\times\Pic^k(C)$ with projective fibers remembering $(\cL_d,\cL_k)$. The product $C\times \barQ_\beta(C,X)$ carries tautological line bundles $\barP_d,\barP_k$ restricting at any quasimap $u$ to the underlying line bundles $\cL_d,\cL_k$, respectively. (We reserve the term ``Poincar\'{e} line bundle'' later for a tautological bundle on $C\times\Pic^d(C)$ which is in addition trivialized along a point of $C$.) The product $C\times \barQ_\beta(C,X)$ carries universal sections
\begin{align*}
    &\overline{u}_0: \cO \to \barP_k,\\
    &\overline{u}_i: \cO \to \barP_d \otimes \barP_{k}^{-1}, \quad i=1,\ldots,r-s,\\
    &\overline{u}_i:\cO \to \barP_d, \quad \quad \quad \quad i=r-s+1,\ldots,r+1.
\end{align*}

A quasimap $u$ is a ``map with base-points.'' Indeed, sections $u_j$ satisfying (NV1), (NV2) but not both of (BPF1) and (BPF2) gives rise to a \emph{rational} map $f:C\dashrightarrow X$. The rational map $f$ extends canonically, after twisting down base-points, to a morphism $f':C\to X$, but $f'$ no longer lies in class $\beta$ unless $u\in \cM_\beta(C,X)$ to begin with. A version of this twisting is made explicit in \S\ref{sec:twisting}.

\subsection{Non-degenerate maps and inclusions of linear series}

By Example \ref{eg:bad_components}, $\cM_\beta(C,X)$ (and thus $\barQ_\beta(C,X)$) may have components of larger than expected dimension. We wish to remove these components in order to compute enumerative (as opposed to virtual) invariants such as $\Tev^X_{g,n,\beta}$. A result of Farkas \cite[Theorem 0.1]{farkas} identifies the ``good'' maps.

\begin{definition}\label{def:nondegen}
   Let $f:C\to X$ be a morphism. We say that $f$ is \emph{non-degenerate} if the image $b(f(C))\subset \P^r$ is not contained in any hyperplane. Let $\cM^\circ_\beta(C,X)\subset \cM_\beta(C,X)$ be the open subset parametrizing non-degenerate maps.
\end{definition}

In terms of Data \ref{data:map_to_X}, a map $f$ is non-degenerate if and only if the sections
\begin{equation*}
    u_0u_1,\ldots,u_0u_{r-s},u_{r-s+1},\ldots,u_{r+1}\in H^0(C,\cL_d)
\end{equation*}
are linearly independent. In particular, we have $u_0\neq 0$. A non-degenerate map $f$ therefore gives rise to the following additional data.

\begin{definition}\cite[Definition 4.2]{chz}\label{def:G}
    Let $G^{r,r-s-1}_{d,k}(C)$ be the space parametrizing:
    \begin{itemize}
        \item a line bundle $\cL\in\Pic^d(C)$ and an effective divisor $D\in\Sym^k(C)$,
        \item an $(r+1)$-dimensional subspace (linear series) $W\subset H^0(C,\cL)$, and
        \item an $(r-s)$-dimensional subspace $V\subset W$, consisting of sections that vanish along $D$.
    \end{itemize}
\end{definition}

Indeed, given $f\in \cM^\circ_\beta(C,X)$, the data of $\cL=\cL_d$, $D=\text{div}(u_0)$, and
\begin{equation*}
(V\subset W)=(\langle u_0u_1,\ldots,u_0u_{r-s}\rangle \subset \langle u_0u_1,\ldots,u_0u_{r-s},u_{r-s+1},\ldots,u_{r+1}\rangle)
\end{equation*}
give a point of $G^{r,r-s-1}_{d,k}(C)$. Let $\pi:\cM^\circ_\beta(C,X)\to G^{r,r-s-1}_{d,k}(C)$ denote this association. A point $(\cL,D,V\subset W)\in G^{r,r-s-1}_{d,k}(C)$ is referred to as an \emph{inclusion of linear series} (ILS). We often write $\uW=(\cL,D,V\subset W)$ for brevity. Note also that Definition \ref{def:G} makes sense for any $s$ in the range $-1\le s\le r$, even though we require $0\le s\le r-2$ in order to speak of $X=\Bl_{\P^s}(\P^r)$.

There is a forgetful map $G^{r,r-s-1}_{d,k}(C)\to \Pic^d(C)$ remembering $\cL$. Choose a point $p\in C$. There is a \emph{unique} (up to isomorphism) Poincar\'{e} line bundle $\cP_d$ on $C\times\Pic^d(C)$ with the property that the restriction of $\cP_d$ to $\{p\}\times\Pic^d(C)$ is trivial. We denote the pullback of $\cP_d$ to $C\times G^{r,r-s-1}_{d,k}(C)$ also by $\cP_d$. We contrast the notation $\cP_d$ with the notation $\barP_d$ used for the tautological line bundle on $C\times \barQ_\beta(C,X)$ used in \S\ref{sec:quasimaps}; $\barP_d$ is \emph{not} assumed trivialized upon restriction to a chosen point.

We also have a forgetful map $G^{r,r-s-1}_{d,k}(C)\to \Sym^k(C)$ remembering $D$, and a universal divisor $\cD\subset C\times \Sym^k(C)$. A choice of defining section of $\cO(\cD)$ gives, after pullback and restriction, a \emph{consistent} choice of defining section $1_D\in H^0(C,\cO(D))$ for every point $\uW\in G^{r,r-s-1}_{d,k}(C)$. We fix such a choice throughout.

It is often useful to view $V$ either as a subspace of $H^0(C,\cL(-D))$ or as a subspace of $H^0(C,\cL)$. One is embedded in the other via the section $1_D$, so there is essentially no distinction, and we will pass freely between these perspectives when there is no harm. When we consider questions of vanishing of elements of $v\in V$ at points $p\in C$, we will be more careful to specify the line bundle in which we view $v$ as a section.

$G^{r,r-s-1}_{d,k}(C)$ is constructed in \cite[\S 4]{chz} as a degeneracy locus, and in particular is projective of expected dimension
\begin{align*}
 &\hspace{0.5em}\dim(G^r_d(C))+\dim(\Sym^k(C))+\dim(\Gr(r-s,r+1))-k(r-s)\\
 =&\hspace{0.5em} g-(r+1)(g-d+r)+k+(r-s)(s+1)-k(r-s)\\
 =&\hspace{0.5em} \expdim \cM_\beta(C,X)-[(r-s)^2+(s+1)(r+1)-1].
\end{align*}

\begin{proposition}\label{prop:G_expdim}
    $G^{r,r-s-1}_{d,k}(C)$ is pure of expected dimension.
\end{proposition}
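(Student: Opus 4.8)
The plan is to prove purity of $G^{r,r-s-1}_{d,k}(C)$ by exhibiting it as a degeneracy locus of the expected codimension inside a smooth (or at least pure-dimensional, known) ambient space, and then invoking the general lower bound on the dimension of such loci together with a matching upper bound coming from Farkas's theorem. The excerpt already tells us that $G^{r,r-s-1}_{d,k}(C)$ was constructed in \cite[\S 4]{chz} as a degeneracy locus; so the first thing I would do is recall that construction explicitly. Concretely, one fibers over $\Pic^d(C)\times \Sym^k(C)$: over this base there is a Poincar\'e bundle and a universal divisor $\cD$, and one forms the relative Grassmannian bundle of $(r+1)$-planes $W\subset H^0(C,\cL)$ (using the pushforward of the Poincar\'e bundle, after twisting by a sufficiently ample divisor to make it a vector bundle and cutting out the locus of honest $h^0$), together with a relative Grassmannian of $(r-s)$-planes $V\subset W$. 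Imposing that $V$ consists of sections vanishing on $D$ is the condition that the composite map $V\hookrightarrow W\to H^0(C,\cL|_D)$ (a bundle of rank $k$ over the base) vanishes, i.e. a section of a rank $k(r-s)$ bundle $\cHom(\cV, \cL|_{\cD})$ over the flag bundle cuts out $G^{r,r-s-1}_{d,k}(C)$.

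The key steps, in order, are as follows. First, set up the smooth ambient space $A$: the relative flag bundle parametrizing $(\cL,D,V\subset W)$ with $V\subset W\subset H^0(C,\cL)$ of the prescribed dimensions, ignoring the vanishing-on-$D$ condition. This $A$ is a tower of Grassmannian bundles over the smooth base $\Pic^d(C)\times\Sym^k(C)$, hence smooth and pure of the evident dimension. Second, identify $G^{r,r-s-1}_{d,k}(C)$ as the zero locus of a section (equivalently a degeneracy locus) of a vector bundle $\cE$ on $A$ of rank $k(r-s)$, namely the bundle whose fiber is $\Hom(V, H^0(D,\cL|_D))$. The general theory of degeneracy loci then guarantees that \emph{every} irreducible component of $G^{r,r-s-1}_{d,k}(C)$ has dimension \emph{at least} $\dim A - \rk\,\cE$, which is precisely the stated expected dimension. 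Third, and this is the crux, establish the reverse inequality: no component exceeds the expected dimension.

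The main obstacle is exactly this upper bound, and here I would lean on Farkas \cite[Theorem 0.1]{farkas}, which the introduction flags as the essential input. The forgetful/association map $\pi:\cM^\circ_\beta(C,X)\to G^{r,r-s-1}_{d,k}(C)$ realizes $G^{r,r-s-1}_{d,k}(C)$ as the target of maps from the non-degenerate locus, and the fibers of $\pi$ have a fixed dimension equal to the dimension of the group of choices of $(u_0,u_i)$ producing a given ILS (a $(\C^*)^2$-quotient of a product of vanishing loci); thus a dimension bound for $\cM^\circ_\beta(C,X)$ transfers to one for $G^{r,r-s-1}_{d,k}(C)$. Since $C$ is a \emph{general} curve, Farkas's Brill-Noether theorem for curves on the toric blow-up controls the dimension of the relevant Brill-Noether loci from above, forcing equality with the expected dimension; combined with the universal lower bound from the degeneracy-locus structure, this yields purity. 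I would take care to check the fiber dimension of $\pi$ is constant (so that upper bounds genuinely transfer) and that the ambient Grassmannian-bundle construction is valid uniformly in $\cL$, i.e. that the relevant pushforward is locally free of the expected rank after an auxiliary twist; these are the routine but necessary verifications, while the genuinely substantive step is the appeal to generality of $C$ via \cite{farkas}.
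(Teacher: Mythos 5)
Your lower bound is fine and is exactly what the paper relies on: the degeneracy-locus (or flag-bundle) construction from \cite{chz} gives that every component of $G^{r,r-s-1}_{d,k}(C)$ has dimension at least the expected one. The problem is your upper bound, which has a genuine gap on two counts. First, the map $\pi:\cM^\circ_\beta(C,X)\to G^{r,r-s-1}_{d,k}(C)$ surjects only onto the open locus $G^{r,r-s-1}_{d,k}(C)^\circ$ of bpf inclusions of linear series; a component of $G^{r,r-s-1}_{d,k}(C)$ contained entirely in the non-bpf locus is invisible to $\pi$, so no bound on $\dim\cM^\circ_\beta(C,X)$ can control it. The density of the bpf locus is established in the paper only \emph{after} this proposition (it is deduced from Corollary \ref{cor:maps_dense}, whose proof runs through $Q^+_\beta(C,X)$ and hence through Proposition \ref{prop:G_expdim} itself), so you cannot assume it here. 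Second, and more seriously, the upper bound on $\dim\cM^\circ_\beta(C,X)$ that you want to ``transfer'' is Corollary \ref{cor:maps_expdim}, which the paper \emph{derives from} Proposition \ref{prop:G_expdim} via the constant-fiber-dimension computation for $\pi$. Farkas's theorem is not a statement about maps to the blow-up; it is a purity statement for secant-type loci, namely the loci in $G^r_d(C)\times\Sym^k(C)$ where the kernel $K(W,D)=\ker\bigl(W\to H^0(C,\cL|_D)\bigr)$ has dimension at least $(r+1)-\ell$. Your argument as written is therefore circular.

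The paper's actual route avoids both issues: it fibers $G^{r,r-s-1}_{d,k}(C)$ over $G^r_d(C)\times\Sym^k(C)$ via the forgetful map $p$, notes that the fiber over $(W,D)$ is $\Gr(r-s,K(W,D))$, stratifies the base by $\dim K(W,D)=(r+1)-\ell$, applies Farkas's purity result to each stratum, and checks that the quantity $\dim\Gr(r-s,r+1-\ell)+\expdim G^{r,r-\ell}_{d,k}(C)$ is maximized precisely at the generic value $\ell=\min(k,s+1)$, where it equals $\expdim G^{r,r-s-1}_{d,k}(C)$. This controls \emph{all} of $G^{r,r-s-1}_{d,k}(C)$, not just the bpf part, and uses Farkas's input in the form in which it is actually stated. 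To repair your proof you would need to replace the appeal to $\cM^\circ_\beta(C,X)$ with this stratification (or give an independent, non-circular upper bound on the Grassmannian fibers of $p$ over the jumping loci).
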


\begin{proof}
Consider the forgetful map
\begin{equation*}
    p:G^{r,r-s-1}_{d,k}(C)\to G^r_d(C)\times\Sym^k(C).
\end{equation*}
The fiber $p^{-1}((W,D))$ over any point is the Grassmannian of $(r-s)$-planes in
\begin{equation*}
K(W,D):=\ker(W\subset H^0(C,\cL)\to H^0(C,\cL|_D)).
\end{equation*}
We have $\dim K(W,D)\ge (r+1)-k$. When $0\le \ell\le \min(k,r+1)$, Farkas \cite[\S 2]{farkas} proves that the locus of $(W,D)$ where $\dim K(W,D) \ge (r+1)-\ell$, is pure of dimension equal to $\expdim G^{r,r-\ell}_{d,k}(C)$, and is empty when this quantity is negative. The quantity $\expdim G^{r,r-\ell}_{d,k}(C)$ is strictly increasing in $\ell$ in the range $0\le \ell \le \min(k,r+1)$, so in fact is the dimension of the locus where $\dim K(W,D)=(r+1)-\ell$.

Therefore, the dimension of the stratum of $G^{r,r-s-1}_{d,k}(C)$ mapping under $p$ to the locus where $\dim K(W,D)$ has dimension exactly $(r+1)-\ell$ equals
\begin{equation*}
\dim \Gr(r-s,r+1-\ell)+\expdim G^{r,r-\ell}_{d,k}(C),
\end{equation*}
and is empty when either summand is negative. This quantity attains a unique maximum at  $\ell=\min(k,s+1)$ (the largest possible), in which case it equals $\expdim G^{r,r-s-1}_{d,k}(C)$.
\end{proof}

\begin{corollary}\label{cor:maps_expdim}
    $\cM^\circ_\beta(C,X)$, if non-empty, is pure of expected dimension.
\end{corollary}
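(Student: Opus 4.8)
The plan is to study the forgetful morphism $\pi\colon \cM^\circ_\beta(C,X)\to G^{r,r-s-1}_{d,k}(C)$ constructed above and to show that every one of its fibers has dimension exactly $(r-s)^2+(s+1)(r+1)-1$. Granting this, let $Z\subseteq\cM^\circ_\beta(C,X)$ be any irreducible component and set $T=\overline{\pi(Z)}$. By the theorem on dimensions of fibers, $\dim Z$ equals $\dim T$ plus the dimension of a generic fiber of $\pi|_Z$, and the latter is at most the fiber dimension of $\pi$. Since Proposition \ref{prop:G_expdim} gives $\dim T\le \dim G^{r,r-s-1}_{d,k}(C)=\expdim\cM_\beta(C,X)-[(r-s)^2+(s+1)(r+1)-1]$, we obtain $\dim Z\le \expdim\cM_\beta(C,X)$. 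Because the expected dimension is also a lower bound for the dimension of every component, equality holds, and $\cM^\circ_\beta(C,X)$ is pure of expected dimension.

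It then remains to compute the fibers of $\pi$. Over an ILS $\uW=(\cL,D,V\subset W)$, unwinding Data \ref{data:map_to_X} and the definition of $\pi$ shows that a non-degenerate $f\in\pi^{-1}(\uW)$ has $\cL_d=\cL$ and $\cL_k=\cO_C(D)$; its section $u_0$ is a defining section of $D$, hence a nonzero multiple of the fixed $1_D$; the sections $1_D\cdot u_1,\ldots,1_D\cdot u_{r-s}$ form an ordered basis of $V$; and $u_{r-s+1},\ldots,u_{r+1}$ project to an ordered basis of $W/V$. The two independence statements are exactly the non-degeneracy of $f$, using that multiplication by $1_D$ identifies $H^0(C,\cL(-D))$ with the space of sections of $\cL$ vanishing on $D$. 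All of this is taken modulo the $(\C^*)^2$-action of Data \ref{data:map_to_X}.

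Consequently the fiber is the quotient by $(\C^*)^2$ of an open subset, cut out by (BPF1) and (BPF2), of the product of $\C^*$ (the choices of $u_0$), the space of ordered bases of $V$ (a torsor under $\mathrm{GL}_{r-s}(\C)$), and the subset of $W^{s+1}$ of tuples projecting to an ordered basis of $W/V$; these have dimensions $1$, $(r-s)^2$, and $(s+1)(r+1)$. The torus acts freely since $u_0\neq 0$ and $V\neq 0$, so the fiber has dimension $(r-s)^2+(s+1)(r+1)-1$, exactly the codimension computed before Proposition \ref{prop:G_expdim}. I expect the main thing to get right is the bookkeeping of the $(\C^*)^2$-action together with the translation of non-degeneracy into these frame-space descriptions; once that is in place the dimension arithmetic is forced. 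Since (BPF1) and (BPF2) are open conditions, the fibers over the image of $\pi$ are nonempty of the stated dimension, which is all the argument requires.
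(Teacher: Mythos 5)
Your proposal is correct and follows essentially the same route as the paper: both arguments fiber $\cM^\circ_\beta(C,X)$ over $G^{r,r-s-1}_{d,k}(C)$ via $\pi$, identify each fiber with the (open, $(\C^*)^2$-quotiented) space of choices of a defining section of $D$, a frame of $V$, and a completion to a frame of $W$, of dimension $1+(r-s)^2+(s+1)(r+1)-2$, and then conclude from Proposition \ref{prop:G_expdim} together with the deformation-theoretic lower bound. Your write-up just makes the upper-bound-plus-lower-bound step explicit where the paper leaves it implicit.
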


\begin{proof}
For any $f\in \cM^\circ_\beta(C,X)$, write $\pi(f)=\uW=(\cL,D,V\subset W)$. Then, the fiber $\pi^{-1}(\uW)$ parametrizes Data \ref{data:map_to_X} satisfying (BPF1) and (BPF2), where:
\begin{itemize}
    \item $u_0\in H^0(C,\cO(D))$ is a defining section,
    \item $u_0u_1,\ldots,u_0u_{r-s}$ form a basis of $V$, and
    \item $u_0u_1,\ldots,u_0u_{r-s},u_{r-s+1},\ldots,u_{r+1}$ form a basis of $W$. 
\end{itemize}
It follows that the dimension of $\pi^{-1}(\uW)$ equals
\begin{equation*}
    1+(r-s)^2+(s+1)(r+1)-2=\expdim \cM_\beta(C,X)-\expdim G^{r,r-s-1}_{d,k}(C).
\end{equation*}
The claim now follows from Proposition \ref{prop:G_expdim}.
\end{proof}

Alternatively, we will see that $\cM^\circ_\beta(C,X)$ is an open subset of the moduli space $Q^+_\beta(C,X)$ constructed in the next section. $Q^+_\beta(C,X)$ will manifestly be pure of expected dimension, also implying Corollary \ref{cor:maps_expdim}. $\cM^\circ_\beta(C,X)$ may be empty, even if $\cM_\beta(C,X)$ is not. For example, $\cM^\circ_\beta(C,X)$ and $G^{r,r-s-1}_{d,k}(C)$ are empty if $d<r$, but $\cM_\beta(C,X)$ always contains degenerate maps if $\beta=d\H^\vee$.

We refer the reader to \cite{chz} and further references therein for an extensive study of the various aspects of the geometry of $G^{r,r-s-1}_{d,k}(C)$, including partial enumerative results when its dimension equals zero. One question to which we do not know the answer is whether $G^{r,r-s-1}_{d,k}(C)$ is always smooth.

If $s=r-2$, then a non-degenerate map $f:C\to \Bl_{\P^{r-2}}(\P^r)$ is given by the data of a degree $d-k$ cover $g\circ f:C\to\P^1$ with underlying line bundle $\cM$, along with an effective divisor $D$ for which $\cM(D)$ has $r+1$ independent sections, including the two defining $g\circ f$. This is similar to the setting of Hurwitz-Brill-Noether theory \cite{jr,HLar2021,llv}, except that in the Hurwitz-Brill-Noether setting, the cover of $\P^1$ is taken to be fixed and general, whereas we take $C$ to be general. The two settings overlap only when $\rho(g,1,d-k)=0$, in which case a general curve admits finitely many covers of $\P^1$ of degree $d-k$.

\subsection{Augmented quasimaps}\label{sec:augmented}

We now compactify $\cM^\circ_\beta(C,X)$ by considering quasimaps relative to inclusions of linear series.

\begin{definition}
Let $\cV\subset \cW$ be the tautological inclusion of universal bundles over $G^{r,r-s-1}_{d,k}(C)$. The moduli space $Q^+_\beta(C,X)$ of \emph{augmented quasimaps} in class $\beta$ is defined by
\begin{equation*}
\pi:Q^+_\beta(C,X)=\P(\cO_{\P(\cV^{r-s})}(-1)\oplus \cW^{s+1}) \to \P(\cV^{r-s})\to G^{r,r-s-1}_{d,k}(C).
\end{equation*}
\end{definition}

$Q^+_\beta(C,X)$ is projective of dimension $\expdim\cM_\beta(C,X)$, by Proposition \ref{prop:G_expdim}. There is a forgetful morphism $\psi:Q^+_\beta(C,X)\to \barQ_\beta(C,X)$ defined as follows. Recall that we have non-vanishing sections $1_D\in H^0(C,\cO(D))$ defined globally over $G^{r,r-s-1}_{d,k}(C)$. A point $u\in Q^+_\beta(C,X)$ with underlying ILS $(\cL,D,V\subset W)$ is given by:
\begin{itemize}
    \item an $(r-s)$-tuple $u_1,\ldots, u_{r-s}\in V\subset H^0(C,\cL(-D))$, parametrized by $\P(\cV^{r-s})$, and
    \item a multiple $u_0=\lambda\cdot 1_D\in H^0(C,\cO(D))$ of the defining section, and sections $u_{r-s+1},\ldots,u_{r+1}\in W\subset H^0(C,\cL)$, parametrized by $\P(\cO_{\P(\cV^{r-s})}(-1)\oplus \cW^{s+1})$.
\end{itemize}
Indeed, given the data of $(u_1,\ldots, u_{r-s})\in V^{r-s}$, we may view a local section of $\cO_{\P(V^{r-s})}(-1)\subset V^{r-s}$ as a scalar multiple $\lambda(u_1,\cdots u_{r-s})\in V^{r-s}$. The scalar $\lambda$ is used to define the section $u_0=\lambda\cdot 1_D\in H^0(C,\cO(D))$, and the data of $u_0,\ldots,u_{r+1}$ define a quasimap $\psi(u)$ with underlying line bundles $\cL_d=\cL$ and $\cL_k=\cO(D)$. The projective bundle structure gives exactly the conditions (NZ1) and (NZ2), as well as the equivalence under the $(\C^{*})^2$-action of Data \ref{data:map_to_X}. Explicitly, scaling $u_1,\ldots,u_{r-s}$ by $\lambda_2$ correspondingly scales $\lambda$ (and hence $u_0$) by $\lambda_2^{-1}$, or equivalently $u_{r-s+1},\ldots,u_{r+1}$ by $\lambda_2$. The scaling action defining $\P(\cO_{\P(\cV^{r-s})}(-1)\oplus \cW^{s+1})$ scales $u_0,u_{r-s+1},\ldots,u_{r+1}$ by $\lambda_1$.

Functorially, $\psi$ is defined such that the pullbacks of the tautological bundles are given by
\begin{align*}
    (1,\psi)^{*}\barP_d&=\cP_d\otimes\cO_{Q^+}(1),\\
    (1,\psi)^{*}\barP_k&=\cO(\cD)\otimes\cO_{Q^+}(1)\otimes\cO_{\P(\cV^{r-s})}(-1),\\
    (1,\psi)^{*}(\barP_d\otimes\barP_k^{-1})&=\cP_d(-\cD)\otimes\cO_{\P(\cV^{r-s})}(1),
\end{align*}
and the pullbacks of the tautological sections are given by
\begin{align}\label{eq:pullback_taut_section}
\nonumber (u_1, \ldots, u_{r-s})=& \psi^{*}(\overline{u}_1,\ldots,\overline{u}_{r-s}):\cO_{C\times Q^+_\beta(C,X)}\to (\cP_d(-\cD)\otimes\nu^{*}\cO_{\P(\cV^{r-s})}(1))^{r-s},\\
\nonumber u_0=&\psi^{*}\overline{u}_{0}:\cO_{C\times Q^+_\beta(C,X)} \to \cO(\cD)\otimes\cO_{Q^+}(1)\otimes\cO_{\P(\cV^{r-s})}(-1),\\
(u_{r-s+1}, \ldots, u_{r+1})=&    \psi^{*}(\overline{u}_{r-s+1},\ldots,\overline{u}_{r+1}):\cO_{C\times Q^+_\beta(C,X)}\to (\cP_d(-\cD)\otimes\cO_{Q^+}(1))^{r-s}.
\end{align}
The maps \eqref{eq:pullback_taut_section} are obtained from the universal lines coming from the projective bundle structure on $Q^+_\beta(C,X)$.

The discrepancy between the line bundle $(1,\psi)^{*}\barP_d$ pulled back from $C\times\barQ_\beta(C,X)$ and the Poincar\'{e} bundle $\cP_d$ pulled back from $C\times \Pic^d(C)$ arises from the fact that $\cP_d$ is assumed trivialized along a choice of point of $C$, but $\barP_d$ is not.

The map $\psi$ is an isomorphism over the open locus $\cM_\beta(C,X)^\circ$ of non-degenerate maps, and is compatible with the map $\pi:\cM^\circ_\beta(C,X)\to G^{r,r-s-1}_{d,k}(C)$. We allow ourselves to use the letter $\pi$ for either map, and identify $\cM^\circ_\beta(C,X)$ with its pre-image in $Q^+_\beta(C,X)$. We say that $u$ is bpf if and only if its underlying quasimap $\psi(u)$ is. We will frequently abuse terminology slightly, referring to the data of an augmented quasimap $u$ by its underlying quasimap $\psi(u)$ or the sections $u_j$ underlying $\psi(u)$, when the underlying ILS is implicit.

\begin{definition}\label{def:birank}
The \emph{bi-rank} of $u\in Q_\beta^+(C,X)$ is the tuple of positive integers
\begin{equation*}
    (r_1,r_2)=(\dim \langle u_1,\ldots,u_{r-s} \rangle, \dim \langle  u_1,\ldots,u_{r+1}\rangle).
\end{equation*}
\end{definition}

We have $1\le r_1\le r-s$ and $r_1\le r_2\le r_1+(s+1)$. Note that we have written here $u_1,\ldots,u_{r-s}$ in both arguments as opposed to $u_0u_1,\ldots,u_0u_{r-s}$, so the bi-rank does not depend on whether $u_0$ is zero. Implicitly, in the second argument, the sections $u_1,\ldots,u_{r-s}$ are viewed as sections of $H^0(C,\cL)$ after twisting by $1_D$. The open subset $\cM^\circ_\beta(C,X)$ is the locus of bpf $u$ of bi-rank $(r-s,r+1)$ (the largest possible), where in addition $u_0\neq0$.

We have therefore realized the moduli space of \emph{non-degenerate} maps $f:C\to X$ as an open subset of a proper moduli space $Q^+_\beta(C,X)$ of expected dimension, though we have not yet proven that $\cM^\circ_\beta(C,X)$ is dense (see Corollary \ref{cor:maps_dense}). By Example \ref{eg:bad_components}, it was necessary to throw out degenerate maps. From the point of view of counting curves, this cost is mild: the curves of interest are usually automatically non-degenerate. For example, when $n\ge r+1$, a curve passing through $n$ general points in $X$ must be non-degenerate, so the geometric Tevelev degree $\Tev^X_{g,n,\beta}$ enumerates only non-degenerate curves.

\subsection{Twisting}\label{sec:twisting}

\begin{definition}
    Let $G^{r,r-s-1}_{d,k}(C)^\circ\subset G^{r,r-s-1}_{d,k}(C)$ be the image of $\cM^\circ_\beta(C,X)$ under $\pi:Q^+_\beta(C,X)\to G^{r,r-s-1}_{d,k}(C)$. We say that an ILS is \emph{bpf} if it lies in $G^{r,r-s-1}_{d,k}(C)^\circ$.
\end{definition}

That is, an ILS is bpf if it underlies some non-degenerate map $f:C\to X$ in class $\beta$. Explicitly, an ILS $(\cL,D,V\subset W)$ is bpf if and only if it satisfies the following two properties.
    \begin{enumerate}
        \item[(BPF1)] The map $V\subset H^0(C,\cL(-D))\to H^0(C,\cL(-D)|_p)$ is surjective for every $p\in C$,
        \item[(BPF2)] The map $W \to H^0(C,\cL|_p)$ is surjective for every $p\in\Supp(D)$.
    \end{enumerate}
``Surjective'' in both cases is equivalent to ``non-zero.'' (BPF1) amounts to the statement that not every section of $V$ vanishes at $p$ \emph{as a section of $H^0(C,\cL(-D))$}. If $p\in \Supp(D)$, it is automatic by definition that every section of $V$ vanishes at $p$ \emph{as a section of $H^0(C,\cL)$}. Throughout the rest of the paper, when we speak of a section of $V$ vanishing at $p$, we will mean (unless otherwise noted) as a section of $H^0(C,\cL(-D))$, whereas when we speak of a section of $W$ vanishing at $p$, we will mean as a section of $H^0(C,\cL)$ (as we must). We will often speak of an ILS satisfying (BPF1) or (BPF2) ``at $p$,'' by which we mean we test the surjectivity of the maps in question only at that particular point.

The conditions above are parallel to the properties (BPF1) and (BPF2) that quasimaps need to satisfy in order to be honest maps. We allow ourselves to refer to these properties freely in both contexts. Indeed, $\uW$ is bpf if and only if, for \emph{any} choice of bases
\begin{equation*}
    \langle u_1,\ldots,u_{r-s}\rangle \subset \langle u_1,\ldots,u_{r+1}\rangle
\end{equation*}
of $V\subset W$, the quasimap $u:C\to X$ determined by the $u_j$ and $u_0=1_D$ is bpf. If $u\in Q^+_\beta(C,X)$ is bpf (at $p$), then $\pi(u)$ must also be, but not vice versa.

We will need an explicit way to ``twist down'' base-points of ILS.

\begin{definition}\label{def:twisting}
Let $(\cL,D,V\subset W)\in G^{r,r-s-1}_{d,k}(C)$ be an ILS. Define the operations:
\begin{enumerate}
    \item[(T1)] If every section of $V$ vanishes at $p$, then replace $D$ with $D+p$, leaving $\cL,V,W$ the same. We now have $(\cL,D,V\subset W)\in G^{r,r-s-1}_{d,k+1}(C)$.
    
    \item[(T2)] If every section of $W$ vanishes at $p\in \Supp(D)$, then replace $D$ with $D-p$ and $\cL$ with $\cL(-p)$. The subspaces $V\subset W$ are identified with subspaces of the new $H^0(C,\cL)$ in the obvious way. We now have $(\cL,D,V\subset W)\in G^{r,r-s-1}_{d-1,k-1}(C)$.   
\end{enumerate}
\end{definition}

In fact, both operations also make sense at the level of augmented quasimaps. If $u\in Q^+_\beta(C,X)$, then applying either (T1) or (T2) at $\pi(u)\in G^{r,r-s-1}_{d,k}(C)$ also twists the sections underlying $u$ up or down by a defining section $1_p$, so one obtains, in a canonical way (independent of choice of $1_p$), an augmented quasimap given by the same sections and the twisted ILS. 

\subsection{Incidence loci}\label{sec:incidence_loci}

Let $\Lambda\subset X$ be a linear subspace, by which we mean the proper transform under $b:X\to\P^r$ of a linear subspace $\Lambda_2\subset\P^r$, not contained in the blown-up locus $P\subset\P^r$. We are interested in imposing conditions of the form $f(p)\in \Lambda$ on maps.

Let $\Lambda_1=g(\Lambda)$ be the image of $\Lambda$ under the projection $g:X\to\P^{r-s-1}$. Write $(\ell_1,\ell_2)=\dim((\Lambda_1),\dim(\Lambda_2))$. The pair $(\ell_1,\ell_2)$, which we call the \emph{bi-dimension} of $\Lambda$, determines the homology class of $\Lambda\subset X$. Writing 
\begin{equation*}
    \alpha=[\alpha_0\alpha_1:\cdots:\alpha_0\alpha_{r-s}:\alpha_{r-s+1}:\cdots:\alpha_{r+1}],
\end{equation*}
$\Lambda\subset X$ is cut out by $(r-s-1)-\ell_1$ linear equations in $\alpha_1,\ldots,\alpha_{r-s}$, and $(s+1+\ell_1)-\ell_2$ \emph{additional} linear equations in $\alpha_0\alpha_1,\ldots,\alpha_{r+1}$. 

Intrinsically, $\Lambda_1\subset\P^{r-s-1}$ is the vanishing locus of the composite morphism
\begin{equation}\label{eq:seq_lambda1}
    \xymatrix{
    \cO_{\P^{r-s-1}}(-1) \ar[r]^(0.42){\iota_1} & H^0(X,\cO_X(\H-\E))^{\vee} \ar[r]^(0.6){\theta_1}  & \C^{(r-s-1)-\ell_1}
    }
\end{equation}
where $\theta_1$ is a surjection of vector spaces; $\Lambda_1$ is identified with the projectivization of $\ker(\theta_1)$. Similarly, $\Lambda_2\subset\P^{r}$ is the vanishing locus of the composite morphism
\begin{equation}\label{eq:seq_lambda2}
\xymatrix{
    \cO_{\P^{r}}(-1) \ar[r]^(0.42){\iota_2} & H^0(X,\cO_X(\H))^{\vee} \ar[r]^(0.66){\theta_2} & \C^{r-\ell_2} 
    }
\end{equation}
and is identified with the projectivization of the kernel of the surjection $\theta_2$. If the $\Lambda_j$ are the images of $\Lambda\subset X$ under $g,b$, respectively, then the pullbacks of \eqref{eq:seq_lambda1} and \eqref{eq:seq_lambda2} to $X$ are related by a commutative diagram
\begin{equation}\label{linear_spaces}
\xymatrix{
    g^{*}\cO_{\P^{r-s-1}}(-1) \ar[r]^(0.42){\iota_1} & H^0(X,\cO_X(\H-\E))^{\vee} \ar[r]^(0.6){\theta_1}  & \C^{(r-s-1)-\ell_1} \\
    b^{*}\cO_{\P^{r}}(-1) \ar[r]^(0.42){\iota_2} & H^0(X,\cO_X(\H))^{\vee} \ar[r]^(0.6){\theta_2} \ar[u]^{1_E^{*}} & \C^{r-\ell_2} \ar[u]^{\rho}
    }
    .
\end{equation}
Then, $\Lambda\subset X$ is the common vanishing locus of both rows of \eqref{linear_spaces}.

Note further that, if the top row of \eqref{linear_spaces} is zero upon restriction to $\alpha\in X$, then the bottom row is zero at $\alpha$ if and only if the composition of
\begin{equation*}
\xymatrix{
    \theta'_{2}:H^0(X,\cO_X(\H))^{\vee} \ar[r]^(0.68){\theta_2}  & \C^{r-\ell_2} \ar[r]^(0.4){\rho'} & \C^{(s+1+\ell_1)-\ell_2}
    }
\end{equation*}
with $\iota_2$ is zero, where $\rho'$ is a choice of complement to $\rho$, so the restriction of $\rho'$ to $\ker(\rho)$ is an isomorphism. Thus, $\Lambda$ is equivalently the common vanishing locus of the morphisms $\theta_{1}\circ\iota_1$ and $\theta'_{2}\circ\iota_2$, and with respect to these defining morphisms has the expected codimension.

Now, let $p\in C$ be a point. Correspondingly, if
\begin{equation*}
    u=[u_0u_1:\cdots:u_0u_{r-s}:u_{r-s+1}:\cdots:u_{r+1}] \in Q^+_\beta(C,X)
\end{equation*}
is bpf, then the corresponding map $f:C\to X$ has $f(p)\in \Lambda$ if and only if the maps
\begin{equation}\label{Q+_degen1}
    \cO_{\P(\cV^{r-s})}(-1)\to \cV^{r-s} \to \cV^{(r-s-1)-\ell_1}\to \nu_{*}(\cP_d(-\cD)|_p)^{(r-s-1)-\ell_1}
\end{equation}
\begin{equation}\label{Q+_degen2}
    \cO_{Q^+}(-1)\to \cO_{\P(\cV^{r-s})}(-1)\oplus \cW^{s+1} \to \cW^{r+1} \to \nu_{*}(\cP_d|_p)^{(s+1+\ell_1)-\ell_2}
\end{equation}
are zero at $u$. In \eqref{Q+_degen1}, the map $\cV^{r-s} \to \cV^{(r-s-1)-\ell_1}$ is $\theta_1\otimes\cV$, where $\cV^{r-s}$ is identified with $H^0(X,\cO(\H-\E))^\vee \otimes \cV$ via the dual basis to $y_1,\ldots,y_{r-s}$ (\S\ref{sec:maps}). Recalling that $\cP_d$ is the Poincar\'{e} line bundle of degree $d$ on $C\times Q^+_\beta(C,X)$ and that $\nu:C\times Q^+_\beta(C,X)\to Q^+_\beta(C,X)$ is the projection, the rightmost map globalizes composition of the inclusion $V\subset H^0(C,\cL(-D))$ and evaluation at $p$. In \eqref{Q+_degen2}, the first map is the  tautological inclusion on $Q^+$, and the second factors through the pullback of the tautological inclusion on $\P(\cV^{r-s})$. The last map is composition of $\theta'_{2}\otimes\cW$ with evaluation at $p$. Both \eqref{Q+_degen1} and \eqref{Q+_degen2} make sense on all of $Q^+_\beta(C,X)$.
\begin{definition}
    Let $\Lambda\subset X$ be a linear subspace as above. We define the \emph{incidence locus} $\Inc^+(p,\Lambda)\subset Q^+_\beta(C,X)$  by the vanishing of the tautological maps \eqref{Q+_degen1} and \eqref{Q+_degen2}.
\end{definition}

\begin{proposition}\label{prop:inc_exp_dim}
    Let $p\in C$ be a general point and let $\Lambda\subset X$ be a linear space of bi-dimension $(\ell_1,\ell_2)$. Then, the incidence locus $\Inc^+(p,\Lambda)\subset Q^+_\beta(C,X)$ is the closure of the locus of non-degenerate maps in $\cM^\circ_\beta(C,X)$ for which $f(p)\in \Lambda$. In particular, $\Inc^+(p,\Lambda)\subset Q^+_\beta(C,X)$ is pure of codimension $r-\ell_2$, the expected.
\end{proposition}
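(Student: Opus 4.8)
The plan is to treat $\Inc^+(p,\Lambda)$ as a degeneracy locus and combine a cheap upper bound on its codimension, coming from its defining equations, with matching lower bounds on the interior and on the boundary. First I would record the easy inequality: by definition $\Inc^+(p,\Lambda)$ is cut out by the simultaneous vanishing of the tautological maps \eqref{Q+_degen1} and \eqref{Q+_degen2}, whose sources are the line bundles $\cO_{\P(\cV^{r-s})}(-1)$ and $\cO_{Q^+}(-1)$ and whose targets have ranks $(r-s-1)-\ell_1$ and $(s+1+\ell_1)-\ell_2$. Hence $\Inc^+(p,\Lambda)$ is locally the zero locus of $[(r-s-1)-\ell_1]+[(s+1+\ell_1)-\ell_2]=r-\ell_2$ functions, so every irreducible component has codimension at most $r-\ell_2$ in $Q^+_\beta(C,X)$. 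The proposition follows once I show (a) that $\Inc^+(p,\Lambda)\cap\cM^\circ_\beta(C,X)$ is pure of codimension exactly $r-\ell_2$ and equals $\{f:f(p)\in\Lambda\}$, and (b) that no irreducible component of $\Inc^+(p,\Lambda)$ is contained in the boundary $\partial:=Q^+_\beta(C,X)\setminus\cM^\circ_\beta(C,X)$. Together with the codimension bound, (b) forces every component to meet $\cM^\circ_\beta(C,X)$, whence $\Inc^+(p,\Lambda)=\overline{\Inc^+(p,\Lambda)\cap\cM^\circ_\beta(C,X)}$, and purity follows from (a).

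For (a), the identification of $\Inc^+(p,\Lambda)\cap\cM^\circ_\beta(C,X)$ with $\{f:f(p)\in\Lambda\}$ is exactly the computation preceding the definition of $\Inc^+$. I would then establish the codimension claim fiberwise over $G^\circ:=G^{r,r-s-1}_{d,k}(C)^\circ$. For a bpf ILS $\uW=(\cL,D,V\subset W)$, conditions (BPF1)--(BPF2) guarantee that the evaluation functionals $V\to\cL(-D)|_p$ and $W\to\cL|_p$ are surjective, so as the bases $u_1,\dots,u_{r-s}$ and $u_{r-s+1},\dots,u_{r+1}$ vary in the fiber $\pi^{-1}(\uW)\cap\cM^\circ_\beta(C,X)$, the points $(g\circ f)(p)\in\P^{r-s-1}$ and $(b\circ f)(p)\in\P^r$ sweep out their ambient projective spaces submersively. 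Thus imposing $(g\circ f)(p)\in\Lambda_1$ together with the additional equations cutting out $(b\circ f)(p)\in\Lambda_2$ cuts codimension $r-\ell_2$ in each fiber; since the fibers of $\cM^\circ_\beta(C,X)\to G^\circ$ are equidimensional by Corollary \ref{cor:maps_expdim}, this yields pure codimension $r-\ell_2$ on $\cM^\circ_\beta(C,X)$.

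The crux is (b). I would stratify $\partial$ by the bi-rank $(r_1,r_2)$ of Definition \ref{def:birank}, by the vanishing of $u_0$, by the common base locus of the chosen sections, and over the non-bpf part $G\setminus G^\circ$ of the base. On each stratum $S$ the goal is the inequality $\operatorname{codim}_{Q^+}\overline S + (\text{number of independent conditions }\Inc^+\text{ imposes along }S)\ge r-\ell_2+1$, which rules out a component of $\Inc^+(p,\Lambda)$ inside $S$. The two tools are: genericity of $p$, which ensures that the base divisor of a degenerate quasimap avoids $p$ so that the evaluation functionals stay surjective and the incidence equations do not vanish for free; and the expected-dimensionality results Proposition \ref{prop:G_expdim} and Corollary \ref{cor:maps_expdim}, applied after twisting down base points (Definition \ref{def:twisting}) and to the smaller ILS spaces governing the lower-bi-rank strata, to control $\operatorname{codim}_{Q^+}\overline S$. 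A representative check: on the divisor $\{u_0=0\}$ the map still imposes $(r-s-1)-\ell_1$ conditions through $u_1,\dots,u_{r-s}$ and $(s+1+\ell_1)-\ell_2$ conditions through $u_{r-s+1},\dots,u_{r+1}$, for total codimension $1+(r-\ell_2)$; a drop in bi-rank forces the evaluated sections into a hyperplane but, after intersecting $\Lambda_1,\Lambda_2$ with that hyperplane, leaves the number of incidence conditions unchanged.

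I expect the main obstacle to be precisely the uniform boundary estimate in (b). The difficulty is genuine because, as Example \ref{eg:bad_components} shows, the degenerate locus in $\cM_\beta(C,X)$ can have larger-than-expected dimension, so it is not enough to know that each stratum is small; one must verify, stratum by stratum, that the codimension of the degeneration together with the number of incidence conditions it still forces always exceeds $r-\ell_2$. The delicate cases are the low-bi-rank strata, where the evaluated sections become linearly dependent and the incidence equations threaten to drop in rank; the bookkeeping there is where the genericity of $p$ and the Farkas-type dimension counts behind Proposition \ref{prop:G_expdim} must be combined carefully.
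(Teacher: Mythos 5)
Your overall architecture matches the paper's: a degeneracy-locus bound showing every component of $\Inc^+(p,\Lambda)$ has codimension at most $r-\ell_2$, fiberwise transversality over the bpf locus of $G^{r,r-s-1}_{d,k}(C)$ on the interior, and a stratified dimension count on the boundary using the twisting operations of Definition \ref{def:twisting}. The paper organizes the boundary step by forming the relative incidence locus $\cI(\Lambda)\subset C\times Q^+_\beta(C,X)$, taking a general point $(p,u)$ of a component $\cJ$, showing successively that $\pi(u)$ and then $u$ are bpf at $p$ and everywhere else, and finally ruling out components in the lower bi-rank strata via the equidimensionality of the fibers of the evaluation map on each bpf bi-rank stratum.

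The genuine gap is in your treatment of the strata where $p$ lies in the base locus. The assertion that ``genericity of $p$ ensures that the base divisor of a degenerate quasimap avoids $p$'' is false: the base divisor varies with the quasimap, so for any fixed $p$ the locus of augmented quasimaps whose base locus contains $p$ is a nonempty positive-codimension subset, and on it the incidence equations \emph{do} vanish partially for free. This is exactly the hardest case, and neither of your representative checks ($u_0=0$; bi-rank drop on the bpf locus) touches it. The paper's proof spends most of its length here: working over $C\times Q^+_\beta(C,X)$, applying (T1) and/or (T2) at $p$ produces a finite (or injective) map $\epsilon:\cJ\to C\times Q^+_{\beta'}(C,X)$ into a quasimap space of strictly smaller dimension. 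For instance, when (T1) applies, the drop $\dim Q^+_{\beta'}=\dim Q^+_{\beta}-(r-s-1)$ compensates for the $(r-s-1)-\ell_1$ conditions lost because $u_1,\ldots,u_{r-s}$ all vanish at $p$; the requirement $p\in\Supp(D')$ contributes one further condition on the $C$-factor; and the remaining $(s+1+\ell_1)-\ell_2$ conditions on the twisted sections are honest because the twisted ILS satisfies (BPF2) at $p$. The total strictly exceeds $r-\ell_2$, giving the contradiction. If you insist on fixing $p$ from the outset, you must separately prove that the locus of quasimaps with $p$ in the base locus has the claimed codimension in $Q^+_\beta(C,X)$ itself, which is not easier than the relative argument; the clean repair is to adopt the relative formulation and restrict to general $p$ at the end. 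A smaller imprecision: on the bpf low-bi-rank strata the incidence condition cuts the full $r-\ell_2$ not because ``intersecting $\Lambda$ with a hyperplane leaves the count unchanged'' but because the evaluation map on each such stratum over a fixed bpf ILS has equidimensional fibers over $X$; the strata where fewer conditions are imposed are precisely the non-bpf ones excluded in the previous step (compare Example \ref{eg:excess}).
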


\begin{proof}
We allow $p\in C$ to vary, and form the relative incidence locus $\cI(\Lambda)\subset C\times Q^+_\beta(C,X)$ whose restriction to any $p\in C$ is $\Inc^+(p,\Lambda)$. By its definition as a degeneracy locus, we have $\dim(\cI(\Lambda))\ge \dim(C\times Q^+_\beta(C,X))-(r-\ell_2)$. Let $\cJ\subset \cI(\Lambda)$ be an irreducible component, and let $(p,u)\in \cJ$ be a general point. It is enough to show that $(p,u)$ has the property that $u\in\cM^\circ_\beta(C,X)$, and that $\cJ$ has expected codimension; the Proposition will follow from restricting to a general point of $C$.

We first show that $\pi(u)$ is BPF at $p$. If not, then we apply operations (T1), then (T2) of Definition \ref{def:twisting} to $\pi(u)$ and $u$ at $p$, if applicable. There are three cases:
\begin{enumerate}
    \item (T1) is applied, but (T2) is not.
    \item (T2) is applied, but (T1) is not.
    \item Both operations are applied.
\end{enumerate}

Consider case (1). Then, (T1) may be applied on all of $\cJ$, and defines an injective map $\epsilon:\cJ\to C\times Q^+_{\beta'}(C,X)$, where $\beta'=d\H^\vee+(k+1)\E^\vee$. Let $(p,u')$ be the image of a general point of $\cI$ under $\epsilon$, and write $\uW'=(\cL',D',V'\subset W'),u'_j$ for the data underlying $u'$. Note first that we have the condition on $p$ that $p\in \Supp (D')$. Next, by assumption, (BPF2) holds for $\uW'$ at $p$, so not all possible sections $u'_j\in W$ vanish at $p$. Therefore, if $p,\pi(u')$ are fixed, then the $(s+1+\ell_1)-\ell_2$ linear equations that must be satisfied by the sections $u'_{r-s+1},\ldots,u'_{r+1}$ upon evaluation at $p$ impose the expected number of conditions on $u'$. It follows that the dimension of the image under $\epsilon$ of a neighborhood of a general point $(p,u)\in \cJ$ is at most
\begin{equation*}
 \dim(C\times Q^+_{\beta'}(C,X))-1-((s+1+\ell_1)-\ell_2)<\dim(C\times Q^+_{\beta}(C,X))-(r-\ell_2)\le \dim(\cJ),
\end{equation*}
a contradiction.

Case (2) is similar. (T2) defines an injective map $\epsilon:\cJ\to C\times Q^+_{(d-1)\H^\vee+(k-1)\E^\vee}(C,X)$, but the expected number $(r-s-1)-\ell_1$ of conditions are imposed on the twisted sections $u'_1,\ldots,u'_{r-s}$ in the image; comparing dimensions yields a contradiction. Case (3) is easier: (T1) and (T2) together define an injective map $\epsilon:\cJ\to C\times Q^+_{(d-1)\H^\vee+k\E^\vee}(C,X)$, but the target has dimension strictly less than $\dim(\cJ)$.

We may repeat the above arguments to show that $\pi(u)$ is in fact BPF at all other points. Indeed, after operation (T1) or (T2) is applied at $q\neq p$ for a $(p,u)\in \cJ$, all linear conditions on the twisted sections $u'_j$ evaluated at $p$ persist. In case (1), one may define a map $\epsilon:\cJ^\circ\to C\times Q^+_{d\H^\vee+(k+1)\E^\vee}$ in a neighborhood\footnote{$\epsilon$ is only defined locally on $\cJ$, because it may happen that $q$ becomes equal to $p$ on a closed subvariety of $\cJ$. In fact, one should take $\cJ^\circ$ to be an \emph{analytic} open (or \'{e}tale) neighborhood $i:\cJ^\circ\to \cJ$ of $(p,u)$. Indeed, $\epsilon$ is defined more precisely as follows: let $s:\cJ^\circ\to C$ be a map such that, for all $(p^\bullet,u^\bullet)\in\cJ^\circ$, the augmented quasimap $i(u^\bullet)$ fails to satisfy (BPF1) at $s((p^\bullet,u^\bullet))$, and furthermore $s((p,u))=q$. Then, $\epsilon((p^\bullet,u^\bullet))$ is given by applying (T1) to $i(u^\bullet)$ at $s((p^\bullet,u^\bullet))$. The role of $s$ is to identify a consistent choice of twisted point in a neighborhood of $(p,u)$. If there are multiple points $q\neq p$ for which $u$ does not satisfy (BPF1) at $q$, then such a map $s$ may not exist Zariski-locally, but does in the analytic or \'{e}tale topologies. The dimension counts remain unchanged.} $\cJ^\circ$ of $(p,u)\in \cJ$ by applying operation (T1) at $q\neq p$. The map $\epsilon$ remains finite (but may no longer be injective), with fibers corresponding to choices of points in the support of the twisted divisor $D$, and there are at least as many conditions on $\im(\epsilon)$ than in the case of twisting at $p$. In case (2), the map $\epsilon$ has 1-dimensional fibers, corresponding to choices of points on $C$, but there are strictly more conditions on $\im(\epsilon)$ than in the case of twisting at $p$. Case (3) is similar. 

Next, we claim the \emph{quasimap} $u$ is BPF at $p$. Indeed, $\pi(u)$ is BPF at $p$, so not all sections in $V,W$, from which the $u_j$ must be chosen, vanish at $p$. If (BPF1), but not (BPF2), is to fail at $p$, then each of the $r-s$ sections $u_1,\ldots,u_{r-s}$ must vanish at $p$, which imposes $r-s$ independent conditions, in addition to the $(s+1+\ell_1)-\ell_2$ conditions imposed on the remaining sections. This gives strictly more than $r-\ell_2$ conditions on the fiber over $\pi(u)$, which is a contradiction. Suppose instead that (BPF2), but not (BPF1), fails at $p$, and furthermore that $p\notin \Supp(D)$. Then, in addition to the $r-s-\ell_1$ independent conditions on $u_1,\ldots,u_{r-s}$, we have that $u_{r-s+1},\ldots,u_{r+1}$ must vanish at $p$ and that $u_0=0$, which again imposes too many conditions on the fiber over $\pi(u)$. If instead $p\in \Supp(D)$, then lying in the support of $D$ imposes one condition \emph{on $p$}, in addition to $r-\ell_1$ conditions on the $u_j$. Once more, too many conditions are imposed near $(p,u)\in \cJ$.

Similarly, (BPF1) and (BPF2) cannot simultaneously fail at $p$. Repeating the arguments shows that $u$ is in fact BPF everywhere (away from $p$).

Finally, let $(r_1,r_2)$ be the bi-rank of $u$. Let $Q^{(r_1,r_2)}_\beta(C,X)^{\circ}\subset Q^+_\beta(C,X)$ be the locally closed subset consisting of bpf augmented quasimaps of bi-rank $(r_1,r_2)$. Consider the map $\ev:C\times Q^{(r_1,r_2)}_\beta(C,X)^{\circ}\to X$ that evaluates at $p$. Suppose that $\cJ$ is generically contained in $C\times Q^{(r_1,r_2)}_\beta(C,X)^{\circ}$. For any fixed $p\in C$ and bpf $\uW\in G^{r,r-s-1}_{d,k}(C)$, the restriction of $\ev$ to $p\times \pi^{-1}(\underline W)\subset C\times Q^{(r_1,r_2)}_\beta(C,X)^{\circ}$ is easily seen to have equidimensional fibers. Thus, the restriction of $\cJ$ to $C\times Q^{(r_1,r_2)}_\beta(C,X)^{\circ}$ has codimension equal to that of $\Lambda$ in $X$, which is $r-\ell_2$. It follows $u$ must be of full bi-rank, and $\cJ$ must be of codimension exactly $r-\ell_2$. On the locus on $Q^{(r-s,r+1)}_\beta(C,X)^{\circ}$ where $u_0$ is identically zero, we see that $\Inc(p,\Lambda)$ also has codimension $r-\ell_2$, so a general point $u\in \Inc(p,\Lambda)$ must in fact lie in $\cM^\circ_\beta(C,X)$. 
\end{proof}

By taking $\Lambda=X$, we obtain:

\begin{corollary}\label{cor:maps_dense}
  The locus of non-degenerate maps $\cM_\beta^\circ(C,X)\subset Q^+_\beta(C,X)$ is dense. 
\end{corollary}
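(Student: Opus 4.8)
The plan is to obtain the statement as the degenerate ($\Lambda = X$) case of Proposition \ref{prop:inc_exp_dim}, so that essentially all of the work has already been done. First I would record the bi-dimension of $X$, regarded as a linear subspace of itself in the sense of \S\ref{sec:incidence_loci}: since $b(X) = \P^r$ and $g(X) = \P^{r-s-1}$, one has $(\ell_1,\ell_2) = (r-s-1,r)$. Note that $X$ is indeed admissible as a ``linear subspace,'' being the proper transform of $\Lambda_2 = \P^r$, which is not contained in the center $P\cong\P^s$ of the blow-up since $s\le r-2$.

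Next I would check that the incidence condition $f(p)\in X$ is vacuous at the level of the defining tautological maps. Substituting $\ell_1 = r-s-1$ and $\ell_2 = r$ into \eqref{Q+_degen1} and \eqref{Q+_degen2}, the two target sheaves carry exponents $(r-s-1)-\ell_1 = 0$ and $(s+1+\ell_1)-\ell_2 = 0$ respectively, hence both vanish. Consequently both degeneracy maps are maps into the zero sheaf, their common vanishing locus is everything, and $\Inc^+(p,X) = Q^+_\beta(C,X)$ for every $p$.

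Finally I would invoke Proposition \ref{prop:inc_exp_dim} itself, which identifies $\Inc^+(p,\Lambda)$ with the closure of the locus of non-degenerate maps $f\in\cM^\circ_\beta(C,X)$ satisfying $f(p)\in\Lambda$. For a general $p$ and $\Lambda = X$, this incidence condition is automatically satisfied, so the locus in question is all of $\cM^\circ_\beta(C,X)$, giving $\Inc^+(p,X) = \overline{\cM^\circ_\beta(C,X)}$. Combining the two computations yields $\overline{\cM^\circ_\beta(C,X)} = Q^+_\beta(C,X)$, and since $\cM^\circ_\beta(C,X)$ does not depend on $p$, this is precisely the asserted density.

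I do not expect any genuine obstacle here: the corollary is a formal specialization. The only point requiring a moment's care is the vanishing of the two target sheaves, and all of the geometric content — namely that a general point of each component of the incidence locus is an honest non-degenerate map of full bi-rank — has already been absorbed into the proof of Proposition \ref{prop:inc_exp_dim}.
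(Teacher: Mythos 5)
Your proposal is correct and is precisely the paper's argument: the paper derives the corollary with the single line ``By taking $\Lambda=X$'' in Proposition \ref{prop:inc_exp_dim}, and your write-up simply makes explicit the bookkeeping (bi-dimension $(r-s-1,r)$, vanishing of both target sheaves, hence $\Inc^+(p,X)=Q^+_\beta(C,X)=\overline{\cM^\circ_\beta(C,X)}$) that the authors leave implicit.
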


Similarly, $G^{r,r-s-1}_{d,k}(C)^\circ\subset G^{r,r-s-1}_{d,k}(C)$ is dense.

One can now plausibly hope to enumerate non-degenerate maps $f:C\to X$ subject to general incidence conditions of the form $f(p_i)\in \Lambda_i$ by considering the intersection
\begin{equation*}
    \bigcap_{i=1}^{n}\Inc^+(p_i,\Lambda_i)\subset Q^+_{\beta}(C,X).
\end{equation*}
However, as the next example shows, such an intersection can still fail to be transverse.

\begin{example}\label{eg:excess}
    Take $X=\Bl_q(\P^2)$, and write $\beta=d\H^\vee+k\E^\vee=d\H-k\E$. Take the numerical assumption \eqref{dim_constraint} of Definition \ref{def:tev}, which is $n=\frac{1}{2}(3d-k)-g+1$. Fix general points $p_1,\ldots,p_n\in C$ and $x_1,\ldots,x_n\in X$, and consider the intersection
    \begin{equation*}
       I=\bigcap_{i=1}^{n} \Inc^+(p_i,x_i)\subset Q^+_\beta(C,X),
    \end{equation*}
    of expected dimension 0. 
    
    However, suppose that $d-k\ge n$ (equivalently, that $d+k\le 2g-2$), and that there exists $\uW=(\cL,D,V\subset W)$ with the property that $V$ contains a non-zero section $v$ vanishing (as a section of $\cL(-D)$) at all of $p_1,\ldots,p_n$. Then, any augmented quasimap
    \begin{equation*}
        [u_0u_1:u_0u_2:u_3]=[(\alpha_0\cdot 1_D)(\alpha_1 v):(\alpha_0\cdot 1_D)(\alpha_2 v):\alpha_3 v1_D]\in \pi^{-1}(\uW),
    \end{equation*}
    where $[\alpha_0\alpha_1:\alpha_0\alpha_2:\alpha_3]\in X$, lies in $I$.
\end{example}

Such a $\uW$ is expected to exist whenever $\dim (G^{2,0}_{d,k}(C))\ge n-1$, which is equivalent to $n\ge 5$. One case in which such $\uW$ can be easily seen to exist is when $k=1$. Then, by the pointed Brill-Noether existence theorem \cite[Theorem 1.1]{osserman_bn_pointed}, there exist linear series $W\in W^2_d(C)$ with a section $v$ vanishing along $p_1+\cdots+p_n$. Take $D=p_0$ to be a different point on which $v$ vanishes, let $v'$ be another section in $W$ vanishing along $p_0$, and take $V=\langle v,v'\rangle$.

The problem in Example \ref{eg:excess} is that, on the bi-rank $(1,1)$ stratum, each incidence locus $\Inc^+(p_i,x_i)$ imposes 1 condition, whereas 2 are expected. While this poses no issues in Proposition \ref{prop:inc_exp_dim}, the excess intersection arises once there are enough incidence loci imposing too few conditions to overcome the codimension of the low bi-rank stratum. In order to compute enumerative invariants, one should thus blow up the strata of low bi-rank.

\subsection{Complete quasimaps}\label{sec:complete}

We now come to the main definition of this paper.

\begin{definition}\label{def:complete_Q}
    Let $b_Q:\wtQ_\beta(C,X)\to Q^+_\beta(C,X)$ be the iterated blow-up of the closures of the strata on $Q_\beta^+(C,X)$ of augmented quasimaps of bi-rank $(r_1,r_2)$, in order, first by $r_1$, then by $r_2$. The space $\wtQ_\beta(C,X)$ is the moduli space of \emph{complete quasimaps} to $X$ in class $\beta$.
\end{definition}

That is, first blow up the stratum of augmented quasimaps of bi-rank $(1,1)$. Next, blow up the proper transform of the closure of the stratum of augmented quasimaps of bi-rank $(1,2)$.  Continue until reaching bi-rank $(1,s+2)$, then proceed to bi-rank $(2,2)$, and so on. We expect it to be true that all blow-up centers are smooth over $G^{r,r-s-1}_{d,k}(C)$ (Ingredient \ref{prop:wtQ_points}), but only give a proof in dimension 2.

Note that $b_Q$ is an isomorphism over the open locus $\cM^{\circ}_\beta(C,X)\subset Q^+_\beta(C,X)$; we identify $\cM^{\circ}_\beta(C,X)$ with its pre-image in $\wtQ_\beta(C,X)$. We denote, abusing notation, the composition $b_Q\circ \pi:\wtQ_\beta(C,X)\to G^{r,r-s-1}_{d,k}(C)$ also by $\pi$.

\begin{definition}
    Let $\Lambda\subset X$ be a linear subspace as in \S\ref{sec:incidence_loci}, and let $p\in C$ be a point. Let $\Inc(p,
    \Lambda)\subset \wtQ_\beta(C,X)$ be the proper transform under $b_Q$ of $\Inc^+(p,\Lambda)\subset Q^+_\beta(C,X)$.
\end{definition}

By definition, the same conclusions of Proposition \ref{prop:inc_exp_dim} for $\Inc^+(p,\Lambda)\subset Q^+_\beta(C,X)$ also hold for $\Inc(p,\Lambda)\subset \wtQ_\beta(C,X)$. We are now ready to state the main conjecture.

\begin{conjecture}\label{conj:main}
Let $p_1,\ldots,p_n\in C$ be general points, and let $\Lambda_1,\ldots,\Lambda_n\subset X$ be general linear spaces. Then, the intersection
\begin{equation*}
    I=\bigcap_{i=1}^{n}\Inc(p_i,\Lambda_i)\subset \wtQ_\beta(C,X)
\end{equation*}
is generically smooth and pure of codimension $\sum_{i=1}^{n}(r-\dim(\Lambda_i))$, the expected. Furthermore, any general point of $I$ lies in $\cM^\circ_\beta(C,X)$. 
\end{conjecture}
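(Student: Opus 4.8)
The plan is to prove both assertions by separating the contribution of the open interior $\cM^\circ_\beta(C,X)$ from that of its complement, and reducing everything to dimension estimates. Write $c=\sum_{i=1}^n(r-\dim(\Lambda_i))$ for the expected codimension. Since $b_Q$ restricts to an isomorphism over $\cM^\circ_\beta(C,X)$, I would first establish that $I\cap \cM^\circ_\beta(C,X)$ is smooth and pure of codimension $c$ (possibly empty), and then show separately that the boundary $I\cap(\wtQ_\beta(C,X)\setminus\cM^\circ_\beta(C,X))$ has codimension strictly greater than $c$. Granting both, every irreducible component of $I$ has its generic point in $\cM^\circ_\beta(C,X)$, where it is smooth of codimension $c$; hence $I$ is the closure of $I\cap\cM^\circ_\beta(C,X)$, which gives all three conclusions at once (generic smoothness, purity of codimension $c$, and that a general point is an honest non-degenerate map).

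For the interior statement I would use a moving argument. The locus $\cM^\circ_\beta(C,X)$ is pure of expected dimension by Corollary \ref{cor:maps_expdim}, and is generically smooth for our general $C$, being built via projective- and Grassmann-bundle constructions from $G^{r,r-s-1}_{d,k}(C)$, which is in turn assembled from the Gieseker--Petri-smooth $G^r_d(C)$ and $\Sym^k(C)$. The group $\Aut(X)$, namely the subgroup of $\mathrm{PGL}_{r+1}$ preserving the blown-up $\P^s$, acts transitively on the dense open orbit $U=X\setminus E$, and for general $f\in\cM^\circ_\beta(C,X)$ and general $p_i$ one has $f(p_i)\in U$ since $f(C)\not\subset E$. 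A general $\Lambda_i$ of fixed bi-dimension is a general $\Aut(X)$-translate of a fixed one, so I would apply Kleiman's transversality theorem on $U$, iteratively with respect to the evaluation maps $\ev_i:\cM^\circ_\beta(C,X)\to X$ at the fixed points $p_i$, to conclude that, after a general translation of each $\Lambda_i$, the loci $\Inc(p_i,\Lambda_i)$ meet transversely in the interior. This yields smoothness and purity of codimension $c$ there.

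The substance of the proof is the boundary estimate, and this is where I expect the main obstacle to lie. I would stratify $\wtQ_\beta(C,X)\setminus\cM^\circ_\beta(C,X)$ by bi-rank $(r_1,r_2)$ together with the exceptional divisors introduced by $b_Q$, and on each stratum bound the dimension of its intersection with $I$ by $(\dim\text{ of the stratum})-(\text{number of conditions imposed by the }\Inc(p_i,\Lambda_i))$. As Example \ref{eg:excess} shows, on a low-bi-rank stratum of $Q^+_\beta(C,X)$ each incidence locus imposes fewer than the expected $r-\dim(\Lambda_i)$ conditions, so the naive estimate fails there; the entire point of the blow-up is that on the corresponding exceptional divisors the proper transforms $\Inc(p_i,\Lambda_i)$ recover enough conditions to force the strict inequality $\dim(\text{stratum}\cap I)<\dim I$. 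Carrying this out requires controlling, stratum by stratum, the dimensions of loci of inclusions of linear series whose underlying sections satisfy extra vanishing conditions at the $p_i$, for instance the locus of $\uW$ carrying a section of $V$ vanishing at all $n$ points in Example \ref{eg:excess}. This is precisely a strengthened Brill--Noether statement: additional vanishing conditions must impose the expected number of conditions on the relevant moduli of (quasi)maps.

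In dimension $2$, with $X=\Bl_q(\P^2)$, I would supply this input via the Brill--Noether theorem for maps to further toric blow-ups of $X$ from \cite{cl_bn_surface}. Concretely, extra vanishing of a section at a point $p_i$ translates into a Brill--Noether condition for maps to a blow-up of $X$ at a point lying over $p_i$; the toric Brill--Noether theorem guarantees that the relevant moduli of such maps has the expected, hence sufficiently small, dimension, which feeds directly into the stratum-by-stratum bound. The hypothesis that $C$ is general is used throughout to invoke these dimension statements. The reason the method does not yet extend to higher dimension is exactly that the analogous Brill--Noether theorem for maps to higher-dimensional toric varieties, which would be needed to bound the corresponding boundary strata, is not currently available.
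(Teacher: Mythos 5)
First, note that the statement in question is stated in the paper as a conjecture: the paper establishes it only for $X=\Bl_q(\P^2)$ (Theorem \ref{main_transversality}), and your proposal is, to its credit, honest that the higher-dimensional case hinges on an unavailable Brill--Noether input (cf.\ Question \ref{question:pointed_BN_secant}). For the dimension-2 case your architecture is essentially the paper's: stratify the complement of $\cM^\circ_\beta(C,X)$ by bi-rank and by the strata of the exceptional divisor, bound the intersection with each stratum by a condition count, and supply the needed ``extra vanishing imposes the expected number of conditions'' statements from the Brill--Noether theorem for maps to toric blow-ups of $\P^2$ (Propositions \ref{prop:g_x2_expdim} and \ref{prop:g_x'1_expdim}, both resting on Theorem \ref{thm:BN_surface}). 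Two differences are worth flagging. The paper does not invoke Kleiman transversality for the interior; it forms a single relative incidence correspondence $\cI$ over the parameter space $\Pi$ of all $(p_i,\Lambda_i)$ and proves expected codimension stratum by stratum (Proposition \ref{prop:stratum_inc_expdim}), recovering generic smoothness at the end from that of $\cM^\circ_\beta(C,X)$, which in dimension 2 comes from the Severi-variety input of Theorem \ref{thm:BN_surface}. Second, before any stratum-by-stratum count the paper must reduce to the case where the underlying ILS is base-point free (Reduction \ref{red:bpf}), via the twisting operations of Definition \ref{def:twisting} and an induction on $\beta\cdot K_X^\vee$; your sketch does not address quasimaps whose underlying ILS has base points, and without this reduction the maps to $G^{\circ}_{d,(k_1,k_2)}(C)$ and $G^{'\circ}_{d,(k,\bark)}(C)$ that drive the key cases are not even defined.

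The one genuine gap is your justification of generic smoothness of $\cM^\circ_\beta(C,X)$ in general dimension. You assert it follows because the space is ``built via projective- and Grassmann-bundle constructions'' over $G^r_d(C)$ and $\Sym^k(C)$; but $G^{r,r-s-1}_{d,k}(C)$ is a \emph{degeneracy locus} inside a Grassmann bundle over $G^r_d(C)\times\Sym^k(C)$, not a bundle over it, and the paper explicitly records that its smoothness is unknown. Generic smoothness of $\cM^\circ_\beta(C,X)$ is precisely the $n=0$ case of the conjecture, and the paper states it is open beyond dimension 2. So your Kleiman-based interior argument is only licensed in dimension 2 --- where the paper's uniform incidence-correspondence argument makes it unnecessary.
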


Conjecture \ref{conj:main} implies Conjecture \ref{conj:main_intro}. Indeed, in the setting of Definition \ref{def:tev}, the intersection $I$ would consist of finitely many reduced points, all corresponding to non-degenerate maps $f:C\to X$ satisfying $f(p_i)=x_i$. Under the assumption $n\ge r+1$, any such map is non-degenerate. More generally, if
\begin{equation*}
    \sum_{i=1}^{n}(r-\dim(\Lambda_i))=(r+1)d-(r-s-1)k-r(g-1)=\dim\wtQ_\beta(C,X).
\end{equation*}
then, Conjecture \ref{conj:main} predicts that $I$ is union of reduced points corresponding to the set of non-degenerate maps $f:C\to X$ in class $\beta$ with $f(p_i)\in \Lambda_i$.

Conjecture \ref{conj:main} is a direct analog of \cite[Theorem 1.7]{lian_pr} for complete quasimaps to $\P^r$ (i.e., complete collineations relative to linear series). The role of the Brill-Noether theorem in transversality for complete quasimaps to $\P^r$ in \cite[Theorem 1.7]{lian_pr} is replaced by Proposition \ref{prop:G_expdim} and Corollary \ref{cor:maps_expdim}. However, some further basic tools in the setting of $X=\Bl_{\P^s}(\P^r)$ are, at the moment, missing. For example, we do not even know if the moduli space of non-degenerate maps $\cM^\circ_\beta(C,X)$ itself is always generically smooth beyond dimension 2, which is the case $n=0$ in Conjecture \ref{conj:main}. The \emph{pointed} Brill-Noether theorem is also crucial in the case $X=\P^r$, see \cite[Lemma 4.4]{lian_pr}, but we do not have a suitable analog for $X=\Bl_{\P^s}(\P^r)$.

However, in dimension 2, these ingredients are afforded by standard results on Severi varieties of surfaces, see Theorem \ref{thm:BN_surface}. We focus on this case in the next two sections, and prove Conjecture \ref{conj:main} for $X=\Bl_q(\P^2)$.

\section{Set-theoretic descriptions}\label{sec:set-theoretic}

In this section, we describe the points of the moduli spaces $\wtQ_\beta(C,X)$ of complete quasimaps and the incidence loci $\Inc(p,\Lambda)$. We focus on the case $X=\Bl_q(\P^2)$, the only one in which Conjecture \ref{conj:main} is proven. The general case is discussed without proofs in \S\ref{sec:higher_dim}.

\subsection{The $(1,1)$ stratum in dimension 2}

We specialize to the case $X=\Bl_q(\P^2)$. In this case, $b_Q:\wtQ_\beta(C,X)\to Q^+_\beta(C,X)$ is a single blow-up, at the bi-rank $(1,1)$ stratum, as the bi-rank $(1,2)$ and $(2,2)$ strata are already Cartier divisors. Thus, describing the points of $\wtQ_\beta(C,X)$ amounts to describing the bi-rank $(1,1)$ stratum and its normal bundle.

For brevity, we write throughout this section $Q^+=Q^+_\beta(C,X)$ and $G_{d,k}=G^{2,0}_{d,k}(C)$, where $\beta=d\H^\vee+k\E^\vee=d\H-k\E$. Recall that, by definition, we have a tower of projective bundles 
\begin{equation*}
    \pi:Q^+=\P(\cO_{\P(\cV^2)}(-1)\oplus\cW)\to \P(\cV^2) \to G_{d,k}.
\end{equation*}
A point of $Q^+$ is denoted $u=[u_0u_1:u_0u_2:u_3]$; the data of the underlying ILS is implicit.

Consider now the stratum $Z$ of augmented quasimaps of bi-rank $(1,1)$. We have a diagram
\begin{equation*}
    \xymatrix{
    Z=\P(\cO(-1,-1)\oplus\cO_{\P(\cV)}(-1))  \ar[r] & \P(\cO(-1,-1)\oplus\cW) \ar[r] \ar[d] & Q^+ \ar[d] \\
     & \P^1\times\P(\cV) \ar[r]^{\sigma} & \P(\cV^2)
    }
\end{equation*}
The map $\sigma$ is the relative Segre embedding, cutting out the locus where $u_1,u_2$ are dependent, and the right square is Cartesian. The pullback $\sigma^{*}\cO_{\P(\cV^2)}(-1)$ is denoted $\cO(-1,-1)=\cO_{\P^1}(-1)\otimes\cO_{\P(\cV)}(-1)$. The locus where $u_3$ lies in the 1-dimensional span of $u_1,u_2$ is cut out by the sub-bundle $\cO_{\P(\cV)}(-1)\subset \cV\subset \cW$. We see immediately:
\begin{proposition}\label{prop:Z_as_product}
    We have $Z\cong X\times \P(\cV)$. In particular, $Z\to G_{d,k}$ is smooth of relative dimension 3. 
\end{proposition}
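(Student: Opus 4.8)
The plan is to start from the description of $Z$ already read off from the Cartesian diagram above, namely that
$$Z=\P\big(\cO(-1,-1)\oplus \cO_{\P(\cV)}(-1)\big)\longrightarrow \P^1\times\P(\cV)$$
is a $\P^1$-bundle over $\P^1\times\P(\cV)$, where $\P^1=\P(\C^2)$ is constant over $G_{d,k}$ and $\cO(-1,-1)=\mathrm{pr}_1^*\cO_{\P^1}(-1)\otimes\mathrm{pr}_2^*\cO_{\P(\cV)}(-1)$. The only tool I need is the invariance of projectivization under twisting, $\P(\cE)\cong\P(\cE\otimes\cM)$. The idea is to twist away the $\P(\cV)$-dependence of the rank-$2$ bundle so that it becomes pulled back from the constant factor $\P^1$, at which point $Z$ splits visibly as a product.

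First I would tensor the rank-$2$ bundle by $\mathrm{pr}_2^*\cO_{\P(\cV)}(1)$. Since
$$\big(\mathrm{pr}_1^*\cO_{\P^1}(-1)\otimes\mathrm{pr}_2^*\cO_{\P(\cV)}(-1)\oplus \mathrm{pr}_2^*\cO_{\P(\cV)}(-1)\big)\otimes \mathrm{pr}_2^*\cO_{\P(\cV)}(1)=\mathrm{pr}_1^*\cO_{\P^1}(-1)\oplus\cO,$$
we get $Z\cong\P\big(\mathrm{pr}_1^*(\cO_{\P^1}(-1)\oplus\cO)\big)$. As this bundle is pulled back along $\mathrm{pr}_1\colon \P^1\times\P(\cV)\to\P^1$, its projectivization base-changes from $\P^1$, giving
$$Z\cong \P\big(\cO_{\P^1}(-1)\oplus\cO_{\P^1}\big)\times_{G_{d,k}}\P(\cV).$$
Here the point to verify is that the $\P^1=\P(\C^2)$ appearing in the Segre factorization is genuinely constant over $G_{d,k}$; this holds because $\cV^2=\cV\otimes_\C\C^2$ and the dependent locus is cut out by the Segre inclusion $\P(\C^2)\times\P(\cV)\hookrightarrow\P(\cV\otimes\C^2)$, so that $\P(\cO_{\P^1}(-1)\oplus\cO_{\P^1})$ is a fixed surface rather than a nontrivial family over the base.

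Finally I would identify $\P(\cO_{\P^1}(-1)\oplus\cO_{\P^1})$ with the first Hirzebruch surface $\mathbb{F}_1\cong\Bl_q(\P^2)=X$, checking compatibility with the toric (Cox) coordinates: a bi-rank $(1,1)$ point of $Z$ over an ILS $(\cL,D,V\subset W)$ has underlying sections $u_1=av$, $u_2=bv$, $u_0=\lambda\cdot 1_D$, $u_3=\alpha_3\, v\,1_D$ for a common direction $[v]\in\P(V)$, so its quasimap equals $[\lambda a:\lambda b:\alpha_3]\cdot(v\,1_D)$. Modulo the common factor $v\,1_D$ this reads off exactly the Cox coordinates $[\alpha_0\alpha_1:\alpha_0\alpha_2:\alpha_3]=[\lambda a:\lambda b:\alpha_3]$ of $X$, with $[v]$ recording the $\P(\cV)$-factor, confirming $Z\cong X\times\P(\cV)$. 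Smoothness of $Z\to G_{d,k}$ of relative dimension $3$ is then immediate: the structure map factors as $X\times\P(\cV)\to\P(\cV)\to G_{d,k}$, smooth of relative dimensions $2$ and $1$, and relative smoothness is independent of whether $G_{d,k}$ itself is smooth.

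I do not expect a serious obstacle: the computation is short. The only genuinely necessary verifications are the constancy of the $\P^1$-factor over $G_{d,k}$ (so that the twist yields a product rather than a Hirzebruch bundle twisted over the base) and the bookkeeping identifying the abstract $\mathbb{F}_1$ with $X$ compatibly with the toric coordinates, the latter being what makes the isomorphism useful for the later analysis of the incidence loci.
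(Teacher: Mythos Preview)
Your proof is correct and follows essentially the same approach as the paper: twist the rank-$2$ bundle by $\cO_{\P(\cV)}(1)$ to reduce to $\P(\cO_{\P^1}(-1)\oplus\cO)$, identify this with $X$, and confirm the coordinate description. Your additional remarks on the constancy of the $\P^1$ factor over $G_{d,k}$ and on relative smoothness are reasonable elaborations, but the core argument matches the paper's exactly.
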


\begin{proof}
Twisting by $\cO_{\P(\cV)}(1)$, we see in fact that
\begin{equation*}
    Z=\P(\cO(-1,-1)\oplus\cO_{\P(V)}(-1))\cong \P(\cO_{\P^1}(-1)\oplus\cO)\cong X\times \P(\cV) \to \P^1\times\P(\cV).
\end{equation*}
Concretely, a point $\alpha=[\alpha_0\alpha_1:\alpha_0\alpha_2:\alpha_3]\in X$ and a section $v\in \cV\subset H^0(C,\cL(-D))$ correspond to a ``constant quasimap $u:C\to X$ with image $\alpha$.'' In coordinates,
\begin{equation*}
    u=[(\alpha_01_D)\cdot (\alpha_1v):(\alpha_01_D)\cdot (\alpha_2v):\alpha_3v1_D]\in Z.
\end{equation*}
\end{proof}

We now identify the normal bundle $N_{Z/Q^+}$. We have the diagram of sheaves on $X$
\begin{equation}\label{eq:euler}
\xymatrix{
    0 \ar[r]  & \cO \ar[r]^(0.25){A_1} \ar[d] &  \cO(\H-\E)^{\oplus 2}\oplus \cO(\H) \ar[r] \ar[d] &  T_X(-\log E) \ar[r] \ar[d] &  0 \\
    0 \ar[r]  & \cO^{\oplus 2} \ar[r]^(0.25)A &  \cO(\E)\oplus \cO(\H-\E)^{\oplus 2}\oplus \cO(\H) \ar[r] &  T_X \ar[r] &  0 
    }.
\end{equation}
The map $A$ is the direct sum of $A_1$ and $A_2:\cO\to \cO(\E)\oplus\cO(\H)$, which are induced by the tuples of torus-invariant sections $(y_1,y_2,y_3)$ and $(1_E,y_3)$, respectively (see \ref{sec:maps}). The cokernel of $A$ is given in coordinates as follows: at $\alpha=[\alpha_0\alpha_1:\alpha_0\alpha_2:\alpha_3]\in X$, a tangent vector to $\alpha$ is given by
\begin{equation*}
    [(\alpha_0+\epsilon v_0)(\alpha_1+\epsilon v_1):(\alpha_0+\epsilon v_0)(\alpha_2+\epsilon v_2):\alpha_3+\epsilon v_3],
\end{equation*}
where $(v_0,v_1,v_2,v_3)$ may be regarded as a section of $\cO(\E)\oplus \cO(\H-\E)^{\oplus 2}\oplus \cO(\H)$.

The log tangent bundle $T_X(-\log E)$ can be taken by definition to be the cokernel of $A_1$. Its dual is denoted $\Omega_X(\log E)$. The vertical map $T_X(-\log E)\to T_X$ is dual to the kernel of the residue map $\Omega_X(\log E)\to \cO_E$. Pullback of differentials by the projection $g:X\to\P^1$ induces an inclusion of the sub-line bundle $g^{*}\Omega_{\P^1}\subset \Omega_X(\log E)$. In terms of the the dual Euler sequence, this sub-bundle is
\begin{equation*}
    \ker(\cO(-(\H-\E))^2\to\cO) \hookrightarrow \ker(\cO(-(\H-\E))^2\oplus\cO(-\H)\to\cO)
\end{equation*}
induced by the inclusion of the first two coordinates.

\begin{proposition}\label{normal_bundle_canonical}
    As bundles on $Z\cong X\times\P(\cV)$, we have
    \begin{equation*}
        N_{Z/Q^+}\cong \cHom_1(\Omega_X(\log E),\cW/\cO_{\P(\cV)}(-1) \otimes \cO_{\P(\cV)}(1)),
    \end{equation*}
     where the right hand side is by definition the kernel of
    \begin{equation}\label{eqn: map defining N}
    \cHom(\Omega_X(\log E),\cW/\cO_{\P(\cV)}(-1) \otimes \cO_{\P(\cV)}(1)) \to \cHom(g^* \Omega_{\P^1}, \cW/\cV \otimes \cO_{\P(\cV)}(1)).
    \end{equation}
\end{proposition}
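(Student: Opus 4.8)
The plan is to compute $N_{Z/Q^+}$ \emph{relative to} $G_{d,k}$. Since $Z\to G_{d,k}$ and $Q^+\to G_{d,k}$ are both smooth and $Z\hookrightarrow Q^+$ is a morphism over $G_{d,k}$, the tangent directions along $G_{d,k}$ cancel in the normal bundle sequence, so $N_{Z/Q^+}$ is the cokernel of $T_{Z/G_{d,k}}\to T_{Q^+/G_{d,k}}|_Z$. Equivalently, I may fix an ILS $\uW=(\cL,D,V\subset W)$ and work inside a single fiber $\pi^{-1}(\uW)=\P(\cO_{\P(V^2)}(-1)\oplus W)$, where Proposition \ref{prop:Z_as_product} identifies $Z\cap\pi^{-1}(\uW)\cong X\times\P(V)$. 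The computation then takes place in the tower of two projective bundles $Q^+\to\P(\cV^2)\to G_{d,k}$, and I would carry it out with the associated relative Euler sequences.

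To organize the calculation I would use the factorization $Z\subset Y\subset Q^+$ recorded in the diagram preceding the statement, with $Y=\P(\cO(-1,-1)\oplus\cW)$ the preimage of the relative Segre locus. This yields a short exact sequence $0\to N_{Z/Y}\to N_{Z/Q^+}\to N_{Y/Q^+}|_Z\to 0$. The inner inclusion $Z=\P(\cO(-1,-1)\oplus\cO_{\P(\cV)}(-1))\subset Y$ is a sub-projective-bundle coming from the subbundle $\cO_{\P(\cV)}(-1)\subset\cW$, so $N_{Z/Y}=\cHom(\cO_Z(-1),\cW/\cO_{\P(\cV)}(-1))=\cO_Z(1)\otimes(\cW/\cO_{\P(\cV)}(-1))$, a rank-$2$ bundle. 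The outer factor $N_{Y/Q^+}|_Z$ is the pullback of the normal bundle of the Segre embedding $\P^1\times\P(\cV)\hookrightarrow\P(\cV^2)$, which a direct computation identifies with the line bundle $\cHom\big(\cO_{\P^1}(-1)\otimes\cO_{\P(\cV)}(-1),\,(\com^2/\cO_{\P^1}(-1))\otimes(\cV/\cO_{\P(\cV)}(-1))\big)$.

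The target bundle $\cHom_1(\Omega_X(\log E),\cW/\cO_{\P(\cV)}(-1)\otimes\cO_{\P(\cV)}(1))$ carries a parallel two-step filtration: the subbundle $g^*\Omega_{\P^1}\subset\Omega_X(\log E)$ together with the quotient $\cW/\cO_{\P(\cV)}(-1)\twoheadrightarrow\cW/\cV$ exhibits it as an extension of $\cHom(g^*\Omega_{\P^1},(\cV/\cO_{\P(\cV)}(-1))\otimes\cO_{\P(\cV)}(1))$ by $\cHom(\Omega_X(\log E)/g^*\Omega_{\P^1},\cW/\cO_{\P(\cV)}(-1)\otimes\cO_{\P(\cV)}(1))$. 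The heart of the proof is to match the two filtrations term by term. Using the Euler sequence \eqref{eq:euler}, I identify $T_X(-\log E)=\coker(A_1)$, where $A_1=(y_1,y_2,y_3)$; the point is that on $Z$ the defining section $u_0=\lambda\cdot 1_D$ is forced to be a multiple of $1_D$, so the second Euler relation $A_2=(1_E,y_3)$ is frozen, and this is exactly what produces $T_X(-\log E)$ rather than $T_X$. The kernel condition in turn reflects the splitting of homogeneous coordinates: $y_1,y_2\in H^0(X,\cO(\H-\E))$ pull back into $V$, so deformations in the $g^*\Omega_{\P^1}$-direction can only move into $\cV/\langle v\rangle$, whereas $y_3\in H^0(X,\cO(\H))$ pulls back into $W$, letting the complementary direction move into all of $\cW/\langle v\rangle$. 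With these identifications, the sub and quotient of $\cHom_1$ match $N_{Z/Y}$ and $N_{Y/Q^+}|_Z$ respectively (the relative log cotangent $\Omega_X(\log E)/g^*\Omega_{\P^1}\cong\cO_X(-1)$ supplies the twist needed for the sub).

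Rather than only matching sub- and quotient-bundles and arguing that the extension classes agree, the cleanest finish — and the step I expect to be the main obstacle — is to construct the comparison isomorphism directly from deformation theory. A normal vector at a constant quasimap $u\in Z$ is a first-order perturbation of the sections $(u_0,u_1,u_2,u_3)$ modulo rescalings and modulo $Z$-directions, and such a perturbation canonically determines a homomorphism out of $\Omega_X(\log E)$ into $\cW/\langle v\rangle\otimes\cO_{\P(\cV)}(1)$ lying in the kernel of \eqref{eqn: map defining N}. I would then check this map is an isomorphism by a rank count — both sides have rank $3=\operatorname{codim}_{Q^+}(Z)$ — together with fiberwise injectivity. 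The genuinely fiddly work is the bookkeeping of the $\cO_{\P(\cV)}(\pm1)$-twists and verifying that the induced map is precisely \eqref{eqn: map defining N}; in particular one must check that \eqref{eqn: map defining N} is surjective, so that its kernel $\cHom_1$ is a bundle of the expected rank $3$.
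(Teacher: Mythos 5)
Your proposal is sound and, in its final form, coincides with the paper's argument: the paper also works relative to $G_{d,k}$, pulls back the relative Euler sequence of the tower $Q^+\to\P(\cV^2)\to G_{d,k}$ to $Z\cong X\times\P(\cV)$, packages the first-order perturbations $(v_0,v_1,v_2,v_3)$ of the sections into a single map $\Phi$ to $\cHom_1(\Omega_X(\log E),\cW/\cO_{\P(\cV)}(-1)\otimes\cO_{\P(\cV)}(1))$, checks that the Euler relations and the tangent directions to $Z$ in both the $X$- and $\P(\cV)$-factors lie in $\ker\Phi$, and concludes that the induced $\Phi_0$ on $N_{Z/Q^+}$ is a surjection of bundles of equal rank, hence an isomorphism --- exactly your ``direct construction from deformation theory plus rank count.'' The one genuine difference is organizational: the paper never introduces the intermediate $Y=\P(\cO(-1,-1)\oplus\cW)$ or the sequence $0\to N_{Z/Y}\to N_{Z/Q^+}\to N_{Y/Q^+}|_Z\to 0$, and so never has to match filtrations or worry about extension classes; it quotients by all the unwanted directions in one pass. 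Your filtration is nonetheless compatible with the paper's map --- a tangent vector to $Y$ has $(v_1,v_2)=(\alpha_1\bar v+\beta_1 v,\alpha_2\bar v+\beta_2 v)$, hence kills the generator $\alpha_2e_1-\alpha_1e_2$ of $g^{*}\Omega_{\P^1}|_\alpha$ modulo $v$, placing $N_{Z/Y}$ in the sub $\cHom(\Omega_X(\log E)/g^{*}\Omega_{\P^1},-)$ as you claim --- and it buys a transparent accounting of the ranks and of why the kernel condition in \eqref{eqn: map defining N} appears (namely, $v_1,v_2$ are constrained to lie in $V$). You are also right to flag that \eqref{eqn: map defining N} must be surjective so that $\cHom_1$ is a bundle of rank $3$; this is automatic, being restriction to a subbundle followed by projection onto a quotient, and the paper uses it implicitly in the closing equal-rank step.
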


That is, at the point $(\alpha,v)\in X\times \P(\cV)$, the normal bundle to $Z$ is identified with the subspace of $\Hom(\Omega_X(\log E)|_\alpha, W/v)$ where $g^* \Omega_{\P^1}|_\alpha$ maps to $V/v$. This identification is easily described pointwise in coordinates; the proof of Proposition \ref{normal_bundle_canonical} is merely a globalization of what follows. Relatively to $G_{d,k}$, a tangent vector of $Q^+$ at $(\alpha,v)$ is given by
\begin{equation}\label{eqn: normal vector}
    [(\alpha_0 1_D+\epsilon v_0)(\alpha_1v+\epsilon v_1):(\alpha_0 1_D+\epsilon v_0)(\alpha_2v+\epsilon v_2):\alpha_3v1_D+\epsilon v_3],
\end{equation}
where $v_0\in H^0(C,\cO(D))$ is a multiple of $1_D$, we have $v_1,v_2\in V\subset H^0(C,\cL(-D))$, and $v_3\in W$. The deformation of $\alpha_01_D$ by $v_0$ is along $Z$, so may safely be ignored. 

The triple $(v_1,v_2,v_3)$ gives rise to a linear map
\begin{equation*}
    \widetilde{\phi}:(\cO(-(\H-\E))^2\oplus\cO(-\H))|_\alpha\to W/v,
\end{equation*}
defined as follows. First, define the non-zero vector $e_1$ in the first summand $\cO(-(\H-\E))|_{\alpha}$ above to be the vector sent to $ (\alpha_1,\alpha_2)\in \cO_X|_\alpha^2$ under the embedding $\cO(-(\H-\E))|_{\alpha}\to \cO^2|_\alpha$. Define $e_2$ to be the same vector in the second summand $\cO(-(\H-\E))|_{\alpha}$. Define $e_3\in \cO(-\H)|_{\alpha}$ to be the vector sent to $( \alpha_0\alpha_1, \alpha_0\alpha_2, \alpha_3)$ under $\cO(-\H)|_{\alpha}\to \cO^3|_\alpha$. Finally, define $\widetilde{\phi}(e_j)$ to be the image of $v_j$ in $W/v$ (or more precisely, $1_Dv_j$, if $j=1,2$). We are free to quotient by $v$ because deformations by multiples of $v$ are also along $Z$. Restricting to $\Omega_X(\log E)|_\alpha$ yields an element of $\Hom_1(\Omega_X(\log E)|_\alpha, W/v)$ because $v_1,v_2\in V$.

\begin{proof}[Proof of Proposition \ref{normal_bundle_canonical}]
Write $\cO_{Q^+}(1)$ for the relative $\cO(1)$ of the $\P^3$-bundle $Q^+\to \P(\cV^2)$. Then, we have an Euler sequence for $T_{Q^+/G_{d,k}}$
\begin{equation*}
   0\to \cO^{\oplus 2}\to  (\cO_{\P(\cV^2)}(-1)\otimes \cO_{Q^+}(1)) \oplus (\cV\otimes \cO_{\P(\cV^2)}(1))^{\oplus 2} \oplus (\cW\otimes \cO_{Q^+}(1)) \to T_{Q^+/G_{d,k}} \to 0
\end{equation*}
whose pullback to $Z=X\times\P(\cV)$ is
\begin{align}\label{euler_Z}
\nonumber
   0\to \cO^{\oplus 2} &\to  (\cO(\E)\otimes \cO_{\P(\cV)}) \oplus (\cO(\H-\E)\otimes(\cV\otimes \cO_{\P(\cV)}(1)))^{\oplus 2} \oplus (\cO(\H)\otimes (\cW\otimes \cO_{\P(\cV)}(1)))\\ 
   &\to T_{Q^+/G_{d,k}}|_{X\times\P(\cV)} \to 0.
\end{align}
The three summands in the middle term parametrize choices of $v_0,(v_1,v_2),v_3$ in \eqref{eqn: normal vector}.

The injection on the left of \eqref{euler_Z} is given by the direct sum of the maps
\begin{align*}
    \cO &\to (\cO(\E)\otimes \cO_{\P(\cV)}) \oplus (\cO(\H)\otimes (\cW\otimes \cO_{\P(\cV)}(1))),\\
    \cO &\to (\cO(\H-\E)\otimes(\cV\otimes \cO_{\P(\cV)}(1)))^{\oplus 2} \oplus (\cO(\H)\otimes (\cW\otimes \cO_{\P(\cV)}(1))),
\end{align*}
induced by the inclusions of sub-bundles $\cO_{\P(\cV)}(-1)\subset \cV\subset \cW$, and correspond to trivial deformations in $Q^+$ relative to $G_{d,k}$.

To obtain $N_{Z/Q^+}$, we need to quotient further by tangent directions along $Z=X\times\P(\cV)$. Tangent vectors to $X$ are those in the image of 
\begin{equation*}
    (\cO(\E)\otimes \cO_{\P(\cV)}) \oplus (\cO(\H-\E)\otimes \cO_{\P(\cV)})^{\oplus 2} \oplus (\cO(\H)\otimes \cO_{\P(\cV)}),
\end{equation*}
embedded in the natural way into the middle term of \eqref{euler_Z}. (The quotients by $\cO^{\oplus 2}$ are suppressed.) In the coordinates of \eqref{eqn: normal vector}, these are the tangent vectors for which $v_1,v_2,v_3$ are all multiples of $v$. Relative tangent vectors to $\P(\cV)$ are those in the image of $\cV\otimes \cO_{\P(\cV)}(1)$, which is included in the factors 
\begin{equation*}
    (\cO(\H-\E)\otimes(\cV\otimes \cO_{\P(\cV)}(1)))^2 \oplus (\cO(\H)\otimes (\cW\otimes \cO_{\P(\cV)}(1))
\end{equation*}
in \eqref{euler_Z}. In the coordinates of \eqref{eqn: normal vector}, these are the tangent vectors for which $v_0=0$, and $(v_1,v_2,v_3)$ is equal to $(\alpha_1\overline{v},\alpha_2\overline{v},\alpha_3\overline{v})$ for some $\overline{v}\in \cV \otimes \cO_{\P(\cV)}(1)$ (well defined modulo $v$).

We now define an isomorphism $N_{Z/Q^+}\cong \cHom_1(\Omega_X(\log E),\cW/\cO_{\P(\cV)}(-1) \otimes \cO_{\P(\cV)}(1))$. We have natural maps
\begin{align*} 
    \widetilde{\Phi}:&(\cO(\E)\otimes \cO_{\P(\cV)}) \oplus (\cO(\H-\E)^{\oplus 2 }\otimes(\cV\otimes \cO_{\P(\cV)}(1))) \oplus (\cO(\H)\otimes (\cW\otimes \cO_{\P(\cV)}(1)))\\
    \to & (\cO(\E) \oplus \cO(\H-\E)^{\oplus 2} \oplus \cO(\H)) \otimes (\cW \otimes \cO_{\P(\cV)}(1))\\
    \to & T_X \otimes (\cW/\cO_{\P(\cV)}(-1) \otimes \cO_{\P(\cV)}(1))\\
    =&\cHom(\Omega_X,\cW/\cO_{\P(\cV)}(-1) \otimes \cO_{\P(\cV)}(1)).
\end{align*}

Because $\widetilde{\Phi}$ is zero upon restriction to the first summand $\cO(\E)\otimes \cO_{\P(\cV)}$, it factors through $T_X(-\log E) \otimes (\cW/\cO_{\P(\cV)}(-1) \otimes \cO_{\P(\cV)}(1))$. Moreover, because the maps 
\begin{equation*}
\cO(\H-\E)^{\oplus 2} \otimes (\cV \otimes \cO_{\P(\cV)}(1)) \to g^* T_{\P^1} \otimes (\cW/\cV \otimes \cO_{\P(\cV)}(1))
\end{equation*}
and $\cO(\H) \to g^* T_{\P^1}$ are also zero, it further factors through $\cHom_1(T_X(-\log E),\cW/\cO_{\P(\cV)}(-1) \otimes \cO_{\P(\cV)}(1))$. We therefore obtain a map
\begin{align*} 
    \Phi:&(\cO(\E)\otimes \cO_{\P(\cV)}) \oplus (\cO(\H-\E)\otimes(\cV\otimes \cO_{\P(\cV)}(1)))^{\oplus 2} \oplus (\cO(\H)\otimes (\cW\otimes \cO_{\P(\cV)}(1)))\\
    \to &\cHom_1(\Omega_X(\log E),\cW/\cO_{\P(\cV)}(-1) \otimes \cO_{\P(\cV)}(1)).
\end{align*}

We claim that $\Phi$ is surjective. Indeed, after tensoring with $\cO_{\P(\cV)}(1)$, the image of
 \begin{equation*}
   \Psi: (T_X(-\log E) \otimes \cV / \cO_{\P(\cV)}(-1) ) \oplus (\cO(\H) \otimes \cW/\cO_{\P(\cV)}(-1)) \to T_X(-\log E) \otimes \cW/\cO_{\P(\cV)}(-1) )
 \end{equation*}
is equal to the kernel of the morphism \eqref{eqn: map defining N}. Restricting $\Phi$ separately to $(\cO(\H-\E)^2\oplus\cO(\H))\otimes(\cV\otimes \cO_{\P(\cV)}(1))$ and $(\cO(\H)\otimes (\cW\otimes \cO_{\P(\cV)}(1)))$ gives the entire image of $\Psi$.

Finally, we claim that the sub-bundles of the domain of $\Phi$ by which we quotient to obtain $N_{Z/Q^+}$ are all in the kernel. Indeed, the sub-bundle $\cO^{\oplus 2}\subset (\cO(\E)\otimes \cO_{\P(\cV)}) \oplus (\cO(\H-\E)\otimes(\cV\otimes \cO_{\P(\cV)}(1)))^2 \oplus (\cO(\H)\otimes (\cW\otimes \cO_{\P(\cV)}(1)))$ maps under $\Phi$ to zero in the first factor of $T_X \otimes (\cW/\cO_{\P(\cV)}(-1) \otimes \cO_{\P(\cV)}(1))$. Moreover, tangent vectors to $Z$ in the $\P(\cV)$- and $X$-directions map under $\Phi$ to zero in the first and second factors, respectively. Thus, $\Phi$ descends to a map
\begin{equation}\label{eqn: iso Phi0}
\Phi_0:N_{Z/Q^+}\to \cHom_1(\Omega_X(\log E),(\cW/\cO_{\P(\cV)}(-1) \otimes \cO_{\P(\cV)}(1))),
\end{equation}
which is a surjection of vector bundles of the same rank, and therefore an isomorphism.
\end{proof}

We conclude that a point of the exceptional divisor $E_\beta(C,X)\subset \wtQ_\beta(C,X)$ is given by the data of $(\alpha,v)\in X\times \P(\cV)$ as in Proposition \ref{prop:Z_as_product} and a linear map
\begin{equation*}
    \phi\in \P\Hom_1(\Omega_X(\log E)|_\alpha, W/v).
\end{equation*}

\subsection{Incidence loci in dimension 2}

We continue to assume $X=\Bl_q(\P^2)$. We wish now give set-theoretic descriptions of the incidence loci $\Inc(p,\Lambda)\subset\wtQ_\beta(C,X)$. We view points of $\Inc(p,\Lambda)$ as limits of 1-parameter families inside $\Inc^+(p,\Lambda)$ approaching $Z$.

\begin{proposition}\label{prop:inc_lines}
Let $\Lambda\subset X$ be a line, either of class $\H$ or $\H-\E$\footnote{Lines of class $\H$ have bi-dimension $(1,1)$, and lines of class $\H-\E$ have bi-dimension $(0,1)$.}. Then, the proper transform $\Inc(p,\Lambda)$ of $\Inc^+(p,\Lambda)$ under $b:\wtQ_\beta(C,X)\to Q^+_\beta(C,X)$ is equal to its pullback.
\end{proposition}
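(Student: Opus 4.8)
The plan is to show that the incidence locus $\Inc^+(p,\Lambda)\subset Q^+_\beta(C,X)$ does not contain the blow-up center $Z$ (the bi-rank $(1,1)$ stratum), and in fact meets $Z$ only in expected codimension. Once this is established, the proper transform and the total transform of $\Inc^+(p,\Lambda)$ under the single blow-up $b$ agree: a blow-up along a smooth center $Z$ leaves total and proper transforms equal precisely when the subvariety being transformed contains no component of $Z$, equivalently when $\Inc^+(p,\Lambda)$ meets $Z$ properly (in the expected codimension). So the heart of the matter is a local dimension count along $Z$.

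First I would unwind the defining equations \eqref{Q+_degen1}, \eqref{Q+_degen2} of $\Inc^+(p,\Lambda)$ in the coordinates provided by Proposition \ref{prop:Z_as_product}, where a point of $Z$ is a ``constant quasimap'' $u=[(\alpha_01_D)(\alpha_1 v):(\alpha_01_D)(\alpha_2 v):\alpha_3 v1_D]$ attached to $(\alpha,v)\in X\times\P(\cV)$. A line $\Lambda$ of class $\H$ has bi-dimension $(1,1)$, so imposes $(r-\ell_2)=(2-1)=1$ condition via \eqref{Q+_degen2} and no condition via \eqref{Q+_degen1}; a line of class $\H-\E$ has bi-dimension $(0,1)$, imposing one condition through \eqref{Q+_degen1} at the level of the $\P(\cV)$-factor. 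The key observation is that, on the bi-rank $(1,1)$ locus, where $u_1,u_2,u_3$ all lie in the line $\langle v\rangle$, the incidence condition $f(p)\in\Lambda$ degenerates into a condition on the single section $v$ (namely $v(p)=0$, or the vanishing of an appropriate linear form in $\alpha$) \emph{together} with a condition on $\alpha\in X$ coming from the target linear space. I would verify that these conditions cut out a locus of the expected codimension $1$ inside $Z\cong X\times\P(\cV)$, using that $p$ is general and $\Lambda$ is general so that the relevant evaluation map $\cV\to \nu_*(\cP_d(-\cD)|_p)$ (for class $\H-\E$) or the restriction of the target-linear condition to $X$ (for class $\H$) is nonzero on each fiber over $G_{d,k}$.

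The main obstacle I anticipate is bookkeeping the interaction between the two components of the degeneracy condition \eqref{Q+_degen1}--\eqref{Q+_degen2} as they restrict to $Z$: because $u_3$ collapses into $\langle v\rangle$ on the bi-rank $(1,1)$ stratum, the condition \eqref{Q+_degen2} for a class-$\H$ line could \emph{a priori} become vacuous on part of $Z$, which would force $\Inc^+(p,\Lambda)\supset Z$ and make total and proper transforms differ. The crux is therefore to check that the single linear condition defining $\Lambda$ never degenerates to the zero condition along $Z$; this should follow from genericity of $\Lambda$ (its defining form $\theta_2'\circ\iota_2$ does not annihilate the torus-fixed directions used to build $Z$) combined with Proposition \ref{prop:inc_exp_dim}, which already guarantees $\Inc^+(p,\Lambda)$ is pure of the expected codimension $r-\ell_2$ and hence cannot contain the higher-codimension stratum $Z$ as a component. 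Assembling these, $\Inc^+(p,\Lambda)$ meets $Z$ properly, the exceptional divisor $E_\beta(C,X)$ does not appear as a component of the total transform, and so $\Inc(p,\Lambda)$ equals the pullback $b^*\Inc^+(p,\Lambda)$, as claimed.
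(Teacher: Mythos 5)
Your strategy is the paper's own: the point is that $\Inc^+(p,\Lambda)$ is a Cartier divisor whose multiplicity along the blow-up center $Z$ is zero, and one checks this by restricting its single defining equation to $Z\cong X\times\P(\cV)$ fiberwise over $G_{d,k}$, where it factors as (the linear form on $X$ cutting out $\Lambda$) times (evaluation of $v$ at $p$). That fiberwise factorization is exactly the observation recorded in the paper's one-line proof.

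Two assertions in your last paragraph are not right as stated, however, and they are the ones you lean on to close the argument. First, ``$\Inc^+(p,\Lambda)$ is pure of codimension $1$ and hence cannot contain the higher-codimension stratum $Z$ as a component'' does not give what you need: a codimension-$1$ locus can perfectly well \emph{contain} a codimension-$3$ locus without having it as a component, and it is containment of the generic point of $Z$, not componenthood, that produces a nonzero multiple of the exceptional divisor $E_\beta(C,X)$ in $b^{*}\Inc^+(p,\Lambda)$ and hence a discrepancy between pullback and proper transform. Second, the claim that the defining condition ``never degenerates to the zero condition along $Z$'' is false for lines of class $\H$ once $k\ge 1$: over the locus in $G_{d,k}$ where $p\in\Supp(D)$, the factor $(v1_D)(p)$ vanishes for \emph{every} $v$, so the equation vanishes identically on the entire fiber $X\times\P(V)$ of $Z$ there, even though it does not vanish on the whole fiber of $Q^+$ (generic $u_3\in W$ does not vanish at $p$). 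The correct, and sufficient, statement is that this degeneration occurs only over a proper closed subset of each component of $G_{d,k}$ --- for $p$ general no component maps into $\{D:\ p\in\Supp(D)\}$, and the bpf locus is dense --- so the defining equation is nonzero at the generic point of every component of $Z$, the multiplicity of $\Inc^+(p,\Lambda)$ along $Z$ vanishes, and the proper transform equals the pullback. With that repair your computation is complete and agrees with the paper's.
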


\begin{proof}
    Immediate from the observation that, upon restriction to any fiber of $\pi:Q^+\to G_{d,k}$, either $\Inc(p,\Lambda)$ and $Z$ intersect transversely, or $\Inc(p,\Lambda)$ contains the entire fiber.
\end{proof}

The more interesting case is when $\Lambda=x\in X$ is a point. We assume that 
\begin{equation*}
    x=[1\cdot \gamma_1:1\cdot \gamma_2:\gamma_3]
\end{equation*}
lies in $X^\circ:=X-E$. Write again $g:X\to\P^1$ for the projection.

\begin{proposition}\label{prop:inc_exceptional}
Let $u=((\alpha,v),\phi)\in E_\beta(C,X)$ be a point. Let $p\in C$ be a general point, and let $x\in X^\circ$ be any point. For any $\alpha\neq x$, there exists a line $\ell_{x,\alpha}\subset \Omega_X(\log E)|_{\alpha}$, depending on $x$ and only on $x$ (so not on $C,p,\beta$), such that $\Inc(p,x)\cap E_\beta(C,X)$ admits the following characterization.

We have $u\in \Inc(p,x)$ if and only if one of the following holds: 
\begin{enumerate}
    \item[(i)] $\alpha=x$, or
    \item[(ii)] $g(\alpha)=g(x)$ and $p\in\Supp(D)$, or
    \item[(iii)] $v$ vanishes, as a section of $\cL(-D)$, at $p$. Furthermore, $\phi(\ell_{x,\alpha})\in W/v$ vanishes at $p$.
    \end{enumerate}
\end{proposition}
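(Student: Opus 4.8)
The plan is to realize each point of $\Inc(p,x)\cap E_\beta(C,X)$ as a limit of an arc inside $\Inc^+(p,x)\setminus Z$, and to extract the conditions on $((\alpha,v),\phi)$ from the leading-order behaviour of the two scalar equations cutting out $\Inc^+(p,x)$. Since the degeneracy conditions \eqref{Q+_degen1}--\eqref{Q+_degen2} and the center $Z$ are defined relatively over $G_{d,k}$, and $Z\to G_{d,k}$ is smooth, the entire computation commutes with restriction to a fibre $\pi^{-1}(\uW)$; I would therefore fix $\uW=(\cL,D,V\subset W)$, take $p\in C$ general and $x=[1\cdot\gamma_1:1\cdot\gamma_2:\gamma_3]\in X^\circ$, and work inside the blown-up fibre. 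Writing $x$ in coordinates and unwinding \eqref{Q+_degen1}--\eqref{Q+_degen2}, the locus $\Inc^+(p,x)$ is cut out fibrewise by $F_1=(\gamma_2 u_1-\gamma_1 u_2)(p)$ (the condition $g(f(p))=g(x)$) and a complementary function, which we may take to be $F_2=(\gamma_3 u_0u_1-\gamma_1u_3)(p)$ (assuming $\gamma_1\neq 0$).

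First I would restrict $F_1,F_2$ to $Z\cong X\times\P(\cV)$ using the point description of Proposition \ref{prop:Z_as_product}, obtaining
\[
F_1|_Z=(\gamma_2\alpha_1-\gamma_1\alpha_2)\,v(p),\qquad F_2|_Z=(\gamma_3\alpha_0\alpha_1-\gamma_1\alpha_3)\,(1_Dv)(p).
\]
If $F_i(\alpha,v)\neq 0$ then $\Inc^+(p,x)$ avoids a neighbourhood of $(\alpha,v)$, so a necessary condition for $((\alpha,v),\phi)\in\Inc(p,x)$ is $(\alpha,v)\in\Inc^+(p,x)\cap Z=\{F_1|_Z=F_2|_Z=0\}$. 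Using $(1_Dv)(p)=1_D(p)v(p)$ and factoring, this locus is the union of the excess stratum $\{v(p)=0\}$ (of dimension one above the expected $\dim G_{d,k}+1$) with the two expected strata $\{\alpha=x\}$ and $\{g(\alpha)=g(x),\,p\in\Supp(D)\}$; these are exactly cases (iii), (i), (ii). The whole trichotomy is thus governed by whether $v$ vanishes at $p$.

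Over the two expected strata, where $v(p)\neq 0$, I would verify that $\{F_1|_Z=F_2|_Z=0\}$ is smooth of codimension $2$ in $Z$: near $\{\alpha=x\}$ the differentials of $F_1|_Z,F_2|_Z$ are independent functions of $\alpha$, while near the other stratum $F_2|_Z$ vanishes to first order along the smooth divisor $\{p\in\Supp(D)\}\subset G_{d,k}$ (as $\gamma_3\alpha_0\alpha_1-\gamma_1\alpha_3\neq 0$ and $v(p)\neq0$ there) and $F_1|_Z$ varies in the $\alpha$-direction. Hence $\Inc^+(p,x)$ meets $Z$ transversally, and the standard behaviour of proper transforms under blow-up of a smooth center shows that $\Inc(p,x)$ contains the entire exceptional $\P^2=\P(N_{Z/Q^+})$ over these strata, imposing no condition on $\phi$. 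Equivalently, the first-order conditions coming from $F_1(u_t)=F_2(u_t)=0$ can be cancelled against motion along $Z$ since the along-$Z$ differentials are surjective onto $\C^2$. This gives cases (i) and (ii).

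The main obstacle is the excess stratum $\{v(p)=0\}$, which yields case (iii) and forces the genuine normal-bundle computation. Here $F_1|_Z,F_2|_Z$ vanish identically on $\{v(p)=0\}$, and their differentials are both proportional to $d(v(p))$, so $\Inc^+(p,x)$ is tangent to $Z$ and the intersection is non-transverse. Expanding an arc \eqref{eqn: normal vector} with normal datum $(v_1,v_2,v_3)$ representing $\phi$ and imposing $v(p)=0$, the order-$t$ coefficients are $L_1(\phi)=(\gamma_2 v_1-\gamma_1 v_2)(p)$ and $L_2(\phi)=\gamma_3\alpha_0(1_Dv_1)(p)-\gamma_1 v_3(p)$, and the arc lies in $\Inc^+(p,x)$ to first order precisely when $(L_1(\phi),L_2(\phi))$ is proportional to $(\gamma_2\alpha_1-\gamma_1\alpha_2,\ (\gamma_3\alpha_0\alpha_1-\gamma_1\alpha_3)1_D(p))$ --- the common direction produced by moving $v$ within $\P(\cV)$, which now acts nontrivially on evaluations at $p$ because $v(p)=0$. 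This is a single linear condition on $\phi$. I would then match it, through the isomorphism $\Phi_0$ of Proposition \ref{normal_bundle_canonical} and evaluation at $p$, with the vanishing of $\phi(\ell_{x,\alpha})\in W/v$ at $p$, where $\ell_{x,\alpha}\subset\Omega_X(\log E)|_\alpha$ is the conormal direction of the line $\overline{b(\alpha)\,b(x)}\subset\P^2$. The crucial point, and the real content of the statement, is that the factors $1_D(p)$ attached to the $u_1,u_2$-slots in the evaluation cancel against those in the pairing, so that the explicit cross-product $\ell_{x,\alpha}=\langle(\gamma_2\alpha_3-\gamma_3\alpha_0\alpha_2,\ \gamma_3\alpha_0\alpha_1-\gamma_1\alpha_3,\ \gamma_1\alpha_2-\gamma_2\alpha_1)\rangle$ is manifestly independent of $p$, $D$, and $\beta$, depending only on $x$ and $\alpha$; I would also check it lies in $\ker(A_1^\vee)=\Omega_X(\log E)|_\alpha$. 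Finally, for sufficiency in all three cases I would exhibit explicit arcs in $\Inc^+(p,x)\setminus Z$ realizing each admissible $((\alpha,v),\phi)$, which exist because $\Inc^+(p,x)$ is reduced of the expected codimension away from $Z$ by Proposition \ref{prop:inc_exp_dim}.
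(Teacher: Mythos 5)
Your core argument --- restricting the two defining equations of $\Inc^+(p,x)$ to $Z$, reading off the trichotomy (i)/(ii)/(iii) from the factorization of $F_1|_Z,F_2|_Z$, and extracting the condition on $\phi$ over the excess stratum $\{v(p)=0\}$ from the order-$t$ coefficients of an arc --- is the same computation the paper performs for the ``only if'' direction, and your explicit cross-product formula for $\ell_{x,\alpha}$ agrees with the paper's $\langle \gamma_2 e_1-\gamma_1 e_2,\ \gamma_3\alpha_0 e_1-\gamma_1 e_3\rangle\cap\Omega_X(\log E)|_\alpha$; your ``proportionality'' condition is exactly the representative-independent form of the paper's ``both first-order coefficients vanish,'' since the ambiguity in a normal vector is precisely the $\P(\cV)$-direction shift you identify. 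You go beyond the paper in attempting the converse, which the paper explicitly defers: your observation that over the strata $\{\alpha=x\}$ and $\{g(\alpha)=g(x),\,p\in\Supp(D)\}$ the intersection with $Z$ is transverse, so the proper transform contains the full exceptional fiber, is a clean route to sufficiency in cases (i) and (ii). Two soft spots. First, your opening reduction to a single fiber $\pi^{-1}(\uW)$ is in tension with your case-(ii) transversality check, which needs the differential of $1_D(p)$ in the $\Sym^k(C)$-direction; the proper transform does not commute with restriction to a fiber, so the argument must be run on the total space, as you in fact do later. Second, sufficiency in case (iii) is asserted rather than proved: Proposition \ref{prop:inc_exp_dim} does not by itself show that every $\phi$ with $\phi(\ell_{x,\alpha})$ vanishing at $p$ is a limit of points of $\Inc^+(p,x)\setminus Z$; one needs either the explicit one-parameter families the paper alludes to or a dimension count of $\Inc(p,x)\cap E_\beta(C,X)$ over $\{v(p)=0\}$. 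Since the paper itself proves and uses only the ``only if'' direction, your proposal covers at least what is needed.
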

In case (iii), it is assumed that $\alpha\neq x$ (otherwise we are in case (i)), so that the line $\ell_{x,\alpha}$ will be well-defined. The last vanishing requirement makes sense exactly because $v$ is required to vanish at $p$. 

The line $\ell_{x,\alpha}$ is defined in coordinates in the proof below, but it can be identified intrinsically as follows. First, if $g(\alpha)=g(x)$, then $\ell_{x,\alpha}=(g^{*}\Omega_{\P^1})|_\alpha$. Otherwise, let $h:X\to\P^2\dashrightarrow \P^1$ be the projection from $x$, which contracts $E$ to a point $y\in\P^1$, but is indeterminate at $x$. Then, 
\begin{equation*}
\ell_{x,\alpha}=h^{*}\Omega_{\P^1}(y)|_{h(\alpha)}\subset\Omega_X(\log E)|_{\alpha}.
\end{equation*}
The pre-image $h^{-1}(y)$ is the union of $E$ and the unique line $L'_x\subset X$ through $x$ of class $\H-\E$, but $\alpha\notin L'_x$ because $g(\alpha)\neq g(x)$. We will not need this identification, only the existence of such a line $\ell_{x,\alpha}$, that varies non-trivially with $x$.

\begin{proof}[Proof of Proposition \ref{prop:inc_exceptional}]
    We only prove the ``only if'' direction, as we will not need the converse. The ``if'' direction may be proved by constructing explicit 1-parameter families in $\Inc^+(p,x)$, similarly to \cite[Proposition 3.9]{lian_pr}.
    
    Write $x=[1\cdot \gamma_1:1\cdot \gamma_2:\gamma_3]$ as above. If $\alpha=x$, then we are in case (i). Next, suppose that $\alpha\neq x$ but $g(\alpha)=g(x)$. Assume $\alpha_1\neq0$; otherwise, swap the roles of $\alpha_1$ and $\alpha_2$. Write
    \begin{equation*}
        u=[(\alpha_0\cdot 1_D)(\alpha_1v):(\alpha_0\cdot 1_D)(\alpha_2v):\alpha_3v1_D]\in \Inc^+(p,x)\subset Q^+_\beta(C,X).
    \end{equation*}
    Then,
    \begin{equation*}
        \gamma_3(\alpha_0\cdot 1_D)(\alpha_1v)-\gamma_1(\alpha_3v)=(\gamma_3\alpha_0\alpha_1-\gamma_1\alpha_3)v1_D
    \end{equation*}
    must vanish at $p$. However, if $g(\alpha)=g(x)$ but $\alpha\neq x$, and in addition $\alpha_1\neq0$, then we have $\gamma_3\alpha_0\alpha_1-\gamma_1\alpha_3\neq0$, so in fact $v1_D$ must vanish at $p$. Thus, we are either in case (ii), or $v$ must vanish at $p$ \emph{as a section of $\cL(-D)$}.
        
    Finally, assume that $v$ vanishes at $p$ as a section of $\cL(-D)$. Write
    \begin{equation*}
    (u,\phi)=[(\alpha_0\cdot 1_D+\epsilon v_0)(\alpha_1v+\epsilon v_1):(\alpha_0\cdot 1_D+\epsilon v_0)(\alpha_2v+\epsilon v_2):\alpha_3v1_D+\epsilon v_3],
    \end{equation*}
    as in \eqref{eqn: normal vector}, for the first-order expansion of a 1-parameter family of augmented quasimaps in $\Inc^+(p,x)$. We allow here the underlying ILS to vary to first order.
    
    Looking at the first order terms, we see that
    \begin{equation*}
        1_D(\gamma_2v_1-\gamma_1v_2),\gamma_3\alpha_0v_11_D-\gamma_1v_3
    \end{equation*}
    must both vanish at $p$. Recall from Proposition \ref{normal_bundle_canonical} that the triple $(1_Dv_1,1_Dv_2,v_3)$ defines a linear map
    \begin{equation*}
    \widetilde{\phi}:(\cO(-(\H-\E))^2\oplus\cO(-\H))|_\alpha\to W/v,
    \end{equation*}
    sending the basis vectors $e_1,e_2,e_3$ (defined before the proof Proposition \ref{normal_bundle_canonical}) to the images of $1_Dv_1,1_Dv_2,v_3$ in $W/v$. Then, $\phi:\Omega_X(\log E)|_\alpha\to W/v$ is defined by restricting the domain. 
    
    Define now
    \begin{equation*}
         \ell_{x,\alpha}= \langle \gamma_2 e_1 - \gamma_1 e_2, \gamma_3 \alpha_0 e_1-\gamma_1 e_3 \rangle \cap \Omega_X(\log E)|_{\alpha} \subset (\cO(-(\H-\E))^2\oplus\cO(\H))|_\alpha.
    \end{equation*}
    If $\alpha\neq x$, then the intersection $\ell_{x,\alpha}\subset \Omega_X(\log E)|_{\alpha}$ is a line, and it is clear by inspection that it depends on $x$ and only on $x$. By the calculation above, $\phi(\ell_{x,\alpha})\in W/v$ vanishes at $p$. 
    \end{proof}


\section{Transversality in dimension 2}\label{sec:dim2_proof}

In this section, we prove Theorem \ref{main_transversality}, which in particular implies Theorem \ref{thm:main_intro}. The main task is to show that incidence loci $\Inc(p_i,x_i)\subset \wtQ_d(C,\Bl_q(\bP^2))$ do not have unexpected intersections at complete quasimaps $u$ which have base-points at the $p_i$, and/or have non-generic bi-rank. To rule such intersections out, we twist down at base-points until $u$ defines (after possibly incorporating additional data from the underlying ILS) an honest morphism $u':C\to S$ of lower degree. However, $S$ is most naturally taken to be a (toric) \emph{blow-up} of $\Bl_q(\bP^2)$, so in order to control the geometry of $u'$, we will require a transversality result for maps to more general surfaces, \cite[Theorem 1.2]{cl_bn_surface}.

We recall this transversality result in \S\ref{sec:surface_BN}, and then specialize to the toric surfaces that we need in \S\ref{sec:toric_surface_BN}. We combine these ingredients to prove Theorem \ref{thm:main_intro} in \S\ref{sec:main_proof}.

\subsection{Curves on surfaces}\label{sec:surface_BN}

The main input in our proof of Theorem \ref{thm:main_intro} is the following.

\begin{theorem}\cite[Theorem 1.2]{cl_bn_surface}\label{thm:BN_surface}
Let $S$ be a smooth, projective surface. Let $\beta\in H_2(S)$ be a non-zero, effective curve class. Let $\cM_\beta(C,S)$ be the moduli space of maps $f:C
\to S$ in class $\beta$. Let $\cM^{\ge 4}_\beta(C,S)\subset \cM_\beta(C,S)$ be the open subset of maps $f$ whose scheme-theoretic image $C'$ has $[C']\cdot K_S^{\vee}\ge 4$. Then, $\cM^{\ge 4}_\beta(C,S)$ is generically smooth and pure of dimension
    \begin{equation*}
        \beta\cdot K^{\vee}_S+2(1-g),
    \end{equation*}
    the expected.
\end{theorem}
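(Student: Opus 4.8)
The plan is to identify the expected dimension with the deformation-theoretic Euler characteristic and then reduce the entire statement to a single cohomological vanishing that encodes the generality of $C$. For a map $f\colon C\to S$ with fixed source, the tangent and obstruction spaces to $\cM_\beta(C,S)$ at $[f]$ are $H^0(C,f^{*}T_S)$ and $H^1(C,f^{*}T_S)$, and
\[
\chi(C,f^{*}T_S)=\deg(f^{*}T_S)+2(1-g)=\beta\cdot K_S^{\vee}+2(1-g)
\]
is exactly the asserted expected dimension, which is always a lower bound for the local dimension. Since $[f]$ is a smooth point of the expected dimension precisely when $H^1(C,f^{*}T_S)=0$, it suffices to prove that $H^1(C,f^{*}T_S)=0$ — equivalently, by Serre duality, that $H^0(C,f^{*}\Omega_S\otimes\omega_C)=0$ — at a general point of each irreducible component of $\cM^{\ge 4}_\beta(C,S)$. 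This vanishing fails for special $C$, so the whole weight of the argument lies in exploiting that $[C]\in\cM_g$ is general.

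First I would stratify by the degree $m$ of $f$ onto its image $C'$, so that $[C']=\beta/m$ and $[C']\cdot K_S^{\vee}\ge 4$. In the principal case $m=1$, where $f$ is birational onto an integral curve $C'$ of geometric genus $g$, I would pass to the Severi variety $V$ of integral, geometric-genus-$g$ curves in class $\beta$ on $S$, equipped with its moduli map $\mu\colon V\dashrightarrow \cM_g$ sending $C'$ to the class of its normalization. The hypothesis $[C']\cdot K_S^{\vee}\ge 4$ is precisely what should guarantee that $V$ is nonempty of the expected dimension $-K_S\cdot\beta+g-1$, with general member an immersed nodal curve at which $V$ is regular; in particular it is designed to exclude rigid image components such as $(-1)$-curves, whose anticanonical degree is $1$. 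Up to the finite data of a marking by $\Aut(C)$, the birational locus of $\cM_\beta(C,S)$ is then the fiber $\mu^{-1}([C])$, and since $\dim V-\dim\cM_g=\beta\cdot K_S^{\vee}+2(1-g)$, the expected-dimension statement becomes the assertion that $\mu$ is dominant and generically smooth (or, if $\mu$ is not dominant, that a general $[C]$ avoids its image, making the locus empty and the conclusion vacuous).

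The heart of the matter is the generic smoothness of $\mu$, which I would reduce to the same vanishing. The differential of $\mu$ at $[C']$ is the connecting homomorphism $H^0(C,N_f)\to H^1(C,T_C)=T_{[C]}\cM_g$ arising from $0\to T_C\to f^{*}T_S\to N_f\to 0$, and its surjectivity for general $C$ is equivalent to $H^1(C,f^{*}T_S)=0$. To establish it I would degenerate the general curve: specialize $[C]$ to a suitable totally degenerate boundary point of $\overline{\cM}_g$ (for example a chain of elliptic curves), spread the family of maps out via admissible covers or limit linear series so that the moduli of maps is proper over the base, and reduce the vanishing to a combinatorial nonspeciality statement for the relevant restricted bundles on the components of the special fiber. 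Here the positivity hypothesis $[C']\cdot K_S^{\vee}\ge 4$ is used a second time, to force these restrictions to be nonspecial node-by-node; upper semicontinuity of $h^1$ then transports the vanishing back to the general fiber $C$.

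Finally, the multiple-cover strata $m\ge 2$ must be shown not to create excess dimension. For a fixed general $C$, a factorization through a degree-$m$ map $\pi\colon C\to\widetilde{C'}$ onto the normalization of a genus-$g'$ image is severely constrained by Hurwitz--Brill--Noether theory, so I would bound the dimension of each such stratum directly and compare it with $\beta\cdot K_S^{\vee}+2(1-g)$. I expect the main obstacle to be exactly the degeneration/vanishing step: producing a single argument that yields $H^1(C,f^{*}T_S)=0$ \emph{uniformly} across all smooth projective surfaces $S$ and all classes $\beta$, subject only to $[C']\cdot K_S^{\vee}\ge 4$, since both the geometry of the image curve and the regularity of its Severi variety vary considerably with $S$.
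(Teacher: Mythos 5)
This theorem is not proved in the paper at all: it is imported from \cite[Theorem 1.2]{cl_bn_surface}, and the only indication of its proof given here is the remark in \S4.1 that it ``is a consequence of standard results on Severi varieties.'' Your setup matches that route: the identification of the expected dimension with $\chi(C,f^{*}T_S)$, the reduction of generic smoothness plus purity to the vanishing $H^1(C,f^{*}T_S)=0$ at a general point of each component, the passage to the family with varying domain and its moduli map $\mu$ to $\cM_g$, and the separate treatment of the multiple-cover strata are all the right skeleton. (One local slip: surjectivity of the coboundary $H^0(C,N_f)\to H^1(C,T_C)$ is \emph{not} equivalent to $H^1(C,f^{*}T_S)=0$; by the long exact sequence it only gives $H^1(C,f^{*}T_S)\cong H^1(C,N_f)$, so you must separately kill $H^1(C,N_f)$.)

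The genuine gap is exactly the step you flag as the main obstacle. You propose to establish the key vanishing by degenerating $C$ to a chain of elliptic curves and spreading out via admissible covers or limit linear series. Neither tool exists for maps to an arbitrary smooth projective surface $S$: the space of maps from the degenerating family is not proper over the base, and any stable-maps compactification modifies the domain, so there is no ``nonspeciality on the special fiber'' statement to reduce to, and no way to make the argument uniform in $S$. No degeneration is needed. The standard Severi-variety argument is a direct estimate at the general point of a component $B$ dominating $\cM_g$: one has $\dim B\le \dim\mu(B)+h^0(C,f^{*}T_S)$ together with the lower bound $\dim B\ge \chi(C,f^{*}T_S)+3g-3$, one splits $h^0(C,N_f)$ into the contribution of the torsion subsheaf (supported on the ramification of $f$) and $h^0(C,\overline{N}_f)$ of the torsion-free quotient, and one bounds the latter by Riemann--Roch when $\overline{N}_f$ is nonspecial and by Clifford's theorem when it is special; the hypothesis on anticanonical degree rules out the special case and the low-degree images. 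This is uniform in $S$ precisely because Clifford's theorem is. Relatedly, your assertion that $[C']\cdot K_S^{\vee}\ge 4$ guarantees the Severi variety is nonempty of expected dimension with immersed nodal general member at which it is regular is false for general $S$ (Severi varieties of surfaces of general type can be superabundant, and nodality of general members is open already for K3 surfaces) and is not needed: the hypothesis enters only through the numerical estimates above and through excluding rigid or nearly rigid image curves (such as $(-1)$-curves and lines) in the birational and multiple-cover strata, as in Lemmas \ref{lem:x2_deg3} and \ref{lem:x'1_deg3}.
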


Theorem \ref{thm:BN_surface} is a consequence of standard results on Severi varieties, and may be viewed as a Brill-Noether theorem for curves on surfaces. Indeed, when $S=\P^2$, it recovers the classical Brill-Noether theorem, and when $S=\Bl_q(\P^2)=X$, it recovers Corollary \ref{cor:maps_expdim}, but with an additional generic smoothness statement. (We do not know if the generic smoothness holds for higher-dimensional blow-ups.) 

We will apply Theorem \ref{thm:BN_surface} to the cases of toric blow-ups of $\bP^2$, discussed in \S\ref{sec:toric_surface_BN}. See also \cite[Corollary 1.4]{cl_bn_surface} for a specialization of Theorem \ref{thm:BN_surface} to toric surfaces, though we will not need this statement here.

\subsection{Blow-ups of $\P^2$}\label{sec:toric_surface_BN}

Let $b_2:X_2\to\P^2$ be the blow-up of $\P^2$ at two distinct points. Let $\H,\E_1,\E_2\in H_2(X_2)$ be the classes of the line (pulled back from $\P^2$), and two exceptional divisors $E_1,E_2$, respectively. The multiplication table is shown below.

\begin{center}
\begin{tabular}{c | c c c }
 & $\H$ & $\E_1$ & $\E_2$ \\
\hline
$\H$ & $1$ & $0$ & $0$ \\ 
$\E_1$ & $0$ & $-1$ & $0$ \\  
$\E_2$ & $0$ & $0$ & $-1$    
\end{tabular}
\end{center}

\begin{lemma}\label{lem:x2_deg3}
    Let $C'\subset X_2$ be an integral curve. Then, $C' \cdot K_{X_2}^\vee\le 3$ if and only if $b_2(C')\subset\P^2$ is contained in a line.
\end{lemma}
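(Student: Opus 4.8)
The plan is to reduce the statement to an elementary computation of intersection numbers on $X_2$ together with a B\'ezout estimate for the multiplicities of a plane curve. First I would record that, since $X_2$ is the blow-up of $\P^2$ at the two distinct points $q_1,q_2$, we have $K_{X_2}^\vee = 3\H - \E_1 - \E_2$. Every integral curve $C'\subset X_2$ is either one of the exceptional divisors $E_1,E_2$, or the strict transform of an integral plane curve $\Gamma = b_2(C')$ of some degree $d\ge 1$; in the latter case $[C'] = d\H - m_1\E_1 - m_2\E_2$, where $m_i = \mathrm{mult}_{q_i}(\Gamma)\ge 0$ is the multiplicity of $\Gamma$ at $q_i$. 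Using the intersection table, a direct computation gives
\begin{equation*}
    C'\cdot K_{X_2}^\vee = 3d - m_1 - m_2
\end{equation*}
in the non-exceptional case, while $E_i\cdot K_{X_2}^\vee = 1$.

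Next I would observe that the geometric hypothesis is purely numerical: $b_2(C')$ is contained in a line precisely when $d\le 1$ (a point when $d=0$, a line when $d=1$), since an integral plane curve of degree $d\ge 2$ cannot lie in any line of degree $1$. The forward implication is then immediate. If $C'=E_i$, then $C'\cdot K_{X_2}^\vee = 1\le 3$. If $d=1$, then $m_1,m_2\in\{0,1\}$, so $C'\cdot K_{X_2}^\vee = 3 - m_1 - m_2\le 3$.

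For the reverse implication I would argue by contrapositive: assuming $d\ge 2$, I want to show $3d - m_1 - m_2\ge 4$, for which the key input is a bound on the total multiplicity $m_1+m_2$. Let $\ell\subset\P^2$ be the line through the two distinct centers $q_1,q_2$. Since $\Gamma$ is integral of degree $d\ge 2$, we have $\Gamma\ne\ell$, so B\'ezout gives $\Gamma\cdot\ell = d$, and comparing with the local intersection multiplicities at $q_1,q_2$ yields
\begin{equation*}
    m_1 + m_2 \le I_{q_1}(\Gamma,\ell) + I_{q_2}(\Gamma,\ell) \le \Gamma\cdot\ell = d.
\end{equation*}
Therefore $C'\cdot K_{X_2}^\vee = 3d - m_1 - m_2\ge 2d\ge 4$, completing the argument. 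I expect this B\'ezout estimate to be the only real content; everything else is bookkeeping with the intersection form. The one point requiring mild care is the degenerate possibility that $q_1$ or $q_2$ fails to lie on $\Gamma$, but then the corresponding $m_i=0$ and the inequality only improves, so no separate case is needed.
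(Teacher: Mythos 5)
Your proof is correct and follows essentially the same route as the paper's: both reduce to the inequality $m_1+m_2\le d$ for an integral plane curve of degree $d\ge 2$ (which the paper states as ``$d\ge k_1+k_2$ because $C'$ is irreducible''), obtained exactly as you do via B\'ezout with the line through the two centers. Your final estimate $3d-m_1-m_2\ge 2d\ge 4$ is in fact slightly cleaner than the paper's, which bounds by $2(k_1+k_2)$ and must then dispose of the cases $k_1+k_2\le 1$ by hand.
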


\begin{proof}
Write $[C']=d \H - k_1 \E_1 - k_2 \E_2$, so that $C' \cdot K_{X_2}^\vee = 3d-k_1-k_2$. If $d=0$, then $(k_1,k_2)=(0,-1)$ or $(-1,0)$, and the right hand side is equal to 1. If $d=1$, then the right hand side is at most 3. Conversely, if $d >1$, because $C'$ is irreducible, we have $d \geq k_1+k_2$ for all $i \neq j$. It follows that $3d-k_1-k_2 \geq 2k_1+2k_2\ge 4$ as long as $k_1+k_2\ge 2$. The remaining cases can be checked by hand.
\end{proof}

Write now $\beta=d \H - k_1 \E_1 - k_2 \E_2\in H_2(X_2)$, and assume that $d>1$. By Lemma \ref{lem:x2_deg3}, the space $\cM^{\ge 4}_\beta(C,X_2)$ parametrizes maps whose post-composition with $b_2:X_2\to\P^2$ is given by linearly independent sections, spanning a linear series $W\in G^2_d(C)$. The two projections give two distinct subspaces $V_j\subset W$ vanishing along effective divisors $D_j$ of degree $k_j$. The following moduli space parametrizes this underlying data.

\begin{definition}\label{def:g_x2}
Let $\cW$ be the universal rank 3 bundle on $G^2_d(C)$. Define
\begin{equation*}
    G^{\circ}_{d,(k_1,k_2)}(C)\subset (\Gr(2,\cW)\times_{G^2_d(C)}\Gr(2,\cW))\times(\Sym^{k_1}(C)\times\Sym^{k_2}(C))
\end{equation*}
to be the \emph{locally} closed subscheme parametrizing the data of:
\begin{itemize}
    \item $W\in G^2_d(C)$, with underlying line bundle $\cL$,
    \item effective divisors $D_j\in \Sym^{k_j}(C)$, for $j=1,2$, and
    \item \emph{distinct} subspaces $V_1,V_2\subset W$ of dimension 2, such that $V_j$ vanishes along $D_j$.
\end{itemize}
We require furthermore that
\begin{itemize}
    \item $\Supp(D_1)\cap \Supp(D_2)=\emptyset$,
    \item For $j=1,2$, the maps $V_j\subset H^0(C,\cL(-D_j))\to H^0(C,\cL(-D_j)|_p)$ are surjective for every $p\in C$, and
    \item For $j=1,2$, the maps $W/V_j \to H^0(C,\cL|_p)$ are surjective for every $p\in\Supp(D_j)$.
\end{itemize}
\end{definition}

If $W,D_j,V_j$ come from $f\in \cM^{\ge 4}_\beta(C,X_2)$, then the last three conditions amount to base-point freeness. Note that the 1-dimensional subspace $V_1\cap V_2\subset W$ must vanish along $D_1+D_2$.

\begin{proposition}\label{prop:g_x2_expdim}
    $G^{\circ}_{d,(k_1,k_2)}(C)$ is generically smooth and pure of dimension 
    \begin{equation*}
        \beta\cdot K_{X_2}^{\vee}-2g-2=(3d-k_1-k_2)-2g-2,
    \end{equation*}
    the expected.
\end{proposition}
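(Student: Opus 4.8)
The plan is to identify $G^{\circ}_{d,(k_1,k_2)}(C)$ with a quotient of a universal space of maps to blow-ups of $\P^2$, and then to transfer both the dimension count and the generic smoothness directly from Theorem \ref{thm:BN_surface}. The starting observation is that a $2$-dimensional subspace $V_j\subset W$ is the same datum as a point $q_j$ of the dual plane $\P(W^\vee)\cong\P^2$, which is precisely the target of the morphism $\phi_W\colon C\to\P(W^\vee)$ defined by $W$ (this $W$ is base-point-free: the surjectivity conditions of Definition \ref{def:g_x2} force it to be so). Under this dictionary, $V_j$ is the pencil of hyperplanes through $q_j$ pulled back to $W$, its base divisor $D_j$ is the pullback $\phi_W^{-1}(q_j)$ of degree $k_j$, the disjointness $\Supp(D_1)\cap\Supp(D_2)=\emptyset$ together with the distinctness of $V_1,V_2$ both amount to $q_1\neq q_2$, and the remaining conditions of Definition \ref{def:g_x2} are exactly the conditions for $\phi_W$ to lift to an honest morphism $f\colon C\to X_2=\Bl_{q_1,q_2}(\P^2)$ in class $\beta$. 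Since $d>1$, Lemma \ref{lem:x2_deg3} identifies the non-degeneracy of $W$ with the image of $f$ having anticanonical degree at least $4$, so such $f$ lie in $\cM^{\ge 4}_\beta(C,X_2)$.

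Accordingly, I would introduce the universal space
\[
\widetilde{\cM}=\{(q_1,q_2,f) : (q_1,q_2)\in U,\ f\in\cM^{\ge 4}_\beta(C,\Bl_{q_1,q_2}(\P^2))\},\qquad U=(\P^2)^2\setminus\Delta,
\]
with its projection $\rho\colon\widetilde{\cM}\to U$ and the data-extraction morphism $\Pi\colon\widetilde{\cM}\to G^{\circ}_{d,(k_1,k_2)}(C)$. The group $\mathrm{PGL}_3$ acts on $\widetilde{\cM}$ via its action on $\P^2$, moving $(q_1,q_2)$ and postcomposing $f$ with the induced isomorphism of blow-ups. A direct check shows that the extracted tuple $(W,V_1,V_2,D_1,D_2)$ is invariant: postcomposing $\phi_W$ with an automorphism of $\P^2$ does not change $W\subseteq H^0(C,\cL)$, and it carries the pencil through $q_j$ to the pencil through its image, leaving $V_j$ unchanged. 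Hence $\Pi$ factors through the quotient. Conversely, a point of $G^{\circ}$ determines $\phi_W$ intrinsically, and the fiber of $\Pi$ over it is the torsor of identifications $\P(W^\vee)\cong\P^2$, on which $\mathrm{PGL}_3$ acts freely because $\phi_W$ is non-degenerate. Thus $\Pi$ realizes $G^{\circ}_{d,(k_1,k_2)}(C)$ as the quotient $\widetilde{\cM}/\mathrm{PGL}_3$ by a free action, and in particular is smooth of relative dimension $8$.

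It then remains to compute the dimension and establish generic smoothness of $\widetilde{\cM}$, which I would do fiberwise over the smooth, irreducible base $U$ of dimension $4$. For each $(q_1,q_2)$, Theorem \ref{thm:BN_surface} applied to $S=X_2$ gives that the fiber $\cM^{\ge 4}_\beta(C,X_2)$ is generically smooth and pure of dimension $\beta\cdot K_{X_2}^\vee+2(1-g)=(3d-k_1-k_2)+2-2g$. At a general point $f$ of a general fiber, generic smoothness of expected dimension is equivalent to $H^1(C,f^*T_{X_2})=0$; as the surfaces $X_2$ vary in a smooth family over $U$, this vanishing makes the deformations of the pair $(X_2,f)$ unobstructed, so $\widetilde{\cM}$ is smooth at $f$ of dimension $\dim U+\chi(C,f^*T_{X_2})=4+(3d-k_1-k_2+2-2g)$. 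Transitivity of $\mathrm{PGL}_3$ on ordered pairs of distinct points of $\P^2$ exhibits $\rho$ as a bundle with isomorphic fibers, so every component of $\widetilde{\cM}$ dominates $U$ and purity of the fibers propagates to $\widetilde{\cM}$. Dividing by the free $8$-dimensional action yields that $G^{\circ}_{d,(k_1,k_2)}(C)$ is generically smooth and pure of dimension $(3d-k_1-k_2+6-2g)-8=(3d-k_1-k_2)-2g-2$, as claimed.

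The main obstacle I anticipate is the passage from the fiberwise generic smoothness supplied by Theorem \ref{thm:BN_surface} to generic smoothness of the total space $\widetilde{\cM}$: this rests on the deformation-theoretic input that $H^1(C,f^*T_{X_2})$ vanishes at a general point and that deforming $X_2$ within the smooth family over $U$ creates no further obstruction. A secondary point requiring care is the bookkeeping of the equivalence between the conditions of Definition \ref{def:g_x2} and the condition that $\phi_W$ lift to an honest morphism to $X_2$ (in particular that $W$ is forced to be base-point-free), together with the freeness of the $\mathrm{PGL}_3$-action and the verification that its quotient is precisely $G^{\circ}_{d,(k_1,k_2)}(C)$.
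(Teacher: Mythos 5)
Your proof is correct and is essentially the paper's argument: both reduce the statement to Theorem \ref{thm:BN_surface} for $X_2$ by exhibiting $G^{\circ}_{d,(k_1,k_2)}(C)$ as the base of a smooth, surjective forgetful map from (a family of) $\cM^{\ge 4}_\beta(C,X_2)$. The paper simply fixes the two blown-up points and observes that the forgetful map $\pi_2:\cM^{\ge 4}_\beta(C,X_2)\to G^{\circ}_{d,(k_1,k_2)}(C)$ is surjective and smooth of relative dimension $4$ (the fiber being the identifications $\P(W^\vee)\cong\P^2$ matching the two marked points, a torsor under the $4$-dimensional stabilizer), which is exactly your $\dim U-\dim\mathrm{PGL}_3=4-8$ packaged without the auxiliary universal family and quotient.
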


\begin{proof}
    By Lemma \ref{lem:x2_deg3}, we have a well-defined forgetful map $\pi_2:\cM^{\ge 4}_\beta(C,X_2)\to G^{\circ}_{d,(k_1,k_2)}(C)$ remembering $W,D_j,V_j$. In coordinates, we may write $f\in \cM^{\ge 4}_\beta(C,X_2)$ as
    \begin{equation*}
        f=[u_0^1u_0^2 u_1:u_0^1u_2:u_0^2u_3],
    \end{equation*}
    where $u_0^j$ cut out $f^{*}(E_j)\subset C$. Then, define $D_j=\text{div}(u_0^j)$ and
    \begin{align*}
        W&=\langle u_0^1u_0^2 u_1,u_0^1u_2,u_0^2u_3\rangle,\\
        V_1&=\langle u_0^1u_0^2 u_1,u_0^1u_2\rangle,V_2=\langle u_0^1u_0^2 u_1,u_0^2u_3\rangle.
    \end{align*}
    
    By construction, the map $\pi_2$ is surjective and smooth of relative dimension 4. The claim follows from Theorem \ref{thm:BN_surface}.
\end{proof}

We have a parallel story for the blow-up $X'_1$ of $X_1=\Bl_q(\P^2)$ at a point of the exceptional divisor of $X_1$. Let $b':X'_1\to\P^2$ be the composition of blow-ups. Let $\H\in H_2(X'_1)$ be the class of a line pulled back from $\P^2$. Let $\barE\in H_2(X'_1)$ be the class of the \emph{proper transform} $\overline{E}$ of the exceptional divisor on $X_1$, and let $\E\in H_2(X'_1)$ be the class of the exceptional divisor $E$ of the second blow-up. We have the following multiplication table.
\begin{center}
\begin{tabular}{c | c c c }
 & $\H$ & $\barE$ & $\E$ \\
\hline
$\H$ & $1$ & $0$ & $0$ \\ 
$\barE$ & $0$ & $-2$ & $1$ \\  
$\E$ & $0$ & $1$ & $-1$    
\end{tabular}
\end{center}
\begin{lemma}\label{lem:x'1_deg3}
    Let $C'\subset X'_1$ be an integral curve. Then, $C' \cdot K_{X'_1}^\vee\le 3$ if and only if $b'(C')\subset\P^2$ is contained in a line.
\end{lemma}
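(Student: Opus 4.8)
The plan is to mimic the proof of Lemma \ref{lem:x2_deg3}, replacing the line through the two blown-up points by the strict transform of the line determined by the infinitely near center of the second blow-up. First I would record the anticanonical class. Writing $\pi_1:X_1\to\P^2$ for the first blow-up with exceptional divisor $F$, and $\pi_2:X'_1\to X_1$ for the blow-up at the chosen center $z\in F$, the blow-up formula gives $K_{X'_1}=b'^*K_{\P^2}+\overline E+2E=-3\H+\barE+2\E$, so $K_{X'_1}^\vee=3\H-\barE-2\E$. For an integral curve with class $[C']=d\H-a\barE-b\E$, the intersection table then yields $C'\cdot K_{X'_1}^\vee=3d-b$. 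Since $d=\H\cdot[C']$ equals the degree of $b'(C')$ when $C'$ is not $b'$-contracted (and $d=0$ precisely when $C'$ is contracted, in which case $b'(C')$ is a point), the condition that $b'(C')$ be contained in a line is equivalent to $d\le 1$. It therefore suffices to prove $C'\cdot K_{X'_1}^\vee\le 3\iff d\le 1$.

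For the forward implication I would argue the contrapositive: if $d\ge 2$, then $C'\cdot K_{X'_1}^\vee\ge 4$. The key is to produce an effective curve to intersect $C'$ against. The center $z$ lies on $F\cong\P(T_q\P^2)$, so it corresponds to a unique line $L\subset\P^2$ through $q$; let $\tilde L\subset X'_1$ be its strict transform. Since the strict transform of $L$ in $X_1$ meets $F$ transversally exactly at $z$, one computes $[\tilde L]=\H-\barE-2\E$, an integral $(-1)$-curve. For $d\ge 2$ the curve $C'$ is distinct from $\tilde L$ (they have different $\H$-degrees), so non-negativity of the intersection of distinct integral curves gives $0\le C'\cdot\tilde L=d-b$, whence $b\le d$ and $C'\cdot K_{X'_1}^\vee=3d-b\ge 2d\ge 4$.

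For the reverse implication, suppose $d\le 1$. If $d=0$ then $C'$ is $b'$-contracted, hence equals $\overline E$ or $E$, and a direct computation gives $C'\cdot K_{X'_1}^\vee\in\{0,1\}\le 3$. If $d=1$ then $C'$ is the strict transform of a line, with class $\H-m\barE-(m+m')\E$, where $m,m'\ge 0$ are the multiplicities at $q$ and at $z$; thus $b=m+m'\ge 0$ and $C'\cdot K_{X'_1}^\vee=3-b\le 3$.

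The main obstacle I anticipate is the forward direction, specifically the correct identification of the auxiliary curve $\tilde L$ and its class. In Lemma \ref{lem:x2_deg3} the two centers are distinct points of $\P^2$ and the bound $d\ge k_1+k_2$ comes from the honest line joining them; here the second center is \emph{infinitely near} the first, so one must instead use the interpretation of $z$ as a tangent direction at $q$ to exhibit $L$, and verify (via the transversality of $\tilde L$ with $F$ at $z$) that its strict transform acquires class $\H-\barE-2\E$ rather than the class $\H-\barE-\E$ of a generic line through $q$. Once this $(-1)$-curve is in hand, the non-negativity of intersections on the smooth surface $X'_1$ closes the argument exactly as in the two-point case.
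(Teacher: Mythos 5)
Your proof is correct and follows essentially the same route as the paper: both hinge on the inequality $d\ge b\ (=2k+\bark)$ coming from the distinguished line $L$ through $q$ in the direction of $z$. The only cosmetic difference is that you obtain it upstairs, as non-negativity of $C'\cdot\tilde L$ for the $(-1)$-curve $\tilde L$ of class $\H-\barE-2\E$, whereas the paper phrases the same bound downstairs in $\P^2$ as B\'ezout with the local intersection multiplicity $2k+\bark$ of $b'(C')$ with $L$ at $q$.
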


\begin{proof}
Write $[C']= d\H-(k+\bark)\barE-(2k+\bark) \E$, where the coefficients are chosen so that
\begin{equation*}
    [C']\cdot \H=d,[C']\cdot\barE=\bark,[C']\cdot\E=k.
\end{equation*}
We have $K^{\vee}_{X'_1}=3\H-(\E+\barE)-\E=3 \H-\barE-2\E$, so that $K^{\vee}_{S}\cdot\beta=3d-\bark-2k$. If $C'$ does not map to a line, then $b'(C')$ is a curve of degree $d\ge2$ intersecting the line through $q$ corresponding to the blown-up point of $X'_1$ with multiplicity $2k+\bark$ at $q$, so we have $d\ge 2k+\bark$. In this case, we have $[C'] \cdot K_{X_1'}^\vee =3d-2k-\bark \geq 2d \geq 4$. The converse is clear.
\end{proof}

We remark that the conclusions of Lemmas \ref{lem:x2_deg3} and \ref{lem:x'1_deg3} become false as soon as one passes to the blow-up $X_3$ of $\P^2$ at three points. For example, the proper transform of a conic $C'$ through the blown-up points of $\P^2$ has anti-canonical degree 3 \cite[Example 3.9]{cl_bn_surface}. However, $C'$ blows down to a line under the \emph{other} toric morphism $X_3\to\P^2$, contracting proper transforms of the three lines through pairs of blown-up points, that is, $C'$ is Cremona-equivalent to a line. A classification of integral curves of anti-canonical degree at most 3 on smooth, projective toric surfaces is given by \cite[Theorem 1.3]{cl_bn_surface}.

Returning to the surface $X'_1$, write $\beta=d\H-(k+\bark)\barE-(2k+\bark)\E\in H_2(X'_1)$, with $d>1$. As in the proof of Lemma \ref{lem:x'1_deg3}, we have
\begin{equation*}
    \beta\cdot \H=d,\beta\cdot\barE=\bark,\beta\cdot\E=k,
\end{equation*}
and $K^{\vee}_{X'_1}\cdot\beta=3d-\bark-2k$. A map $f\in \cM_\beta^{\ge 4}(C,X'_1)$ gives rise to the following data.

\begin{definition}\label{def:g'}
Let $\cW$ be the universal rank 3 bundle on $G^2_d(C)$.
Define
\begin{equation*}
    G^{'\circ}_{d,(k,\bark)}(C)\subset \Fl(\cW)\times(\Sym^{k}(C)\times\Sym^{\bark}(C))
\end{equation*}
to be the locally closed subscheme parametrizing the data of:
\begin{itemize}
    \item $W\in G^2_d(C)$,
    \item effective divisors $D,\barD$ of degrees $k,\bar{k}$,
    \item a flag $L\subset V\subset W$, such that $V\subset H^0(C,\cL(-D-\barD))$ and $L\subset H^0(C,\cL(-2D-\barD))$.
\end{itemize}
We require that
\begin{itemize}
    \item $V\to H^0(C,\cL(-D-\overline{D})|_p)$ is surjective for every $p\in C$,
    \item $W/V\to H^0(C,\cL|_p)$ is surjective for every $p\in\Supp(D)\cup\Supp(\overline{D})$, and
    \item $L\to H^0(\cL(-2D-\overline{D})|_p)$ is surjective for every $p\in\Supp(\overline{D})$.
\end{itemize}
\end{definition}

Indeed, write 
    \begin{equation*}
        f=[u_0^2\overline{u_0}u_1:u_0\overline{u_0}u_2:u_3] \in \cM_\beta^{\ge 4}(C,X'_1),
    \end{equation*}
where $u_0,\overline{u_0}$ cut out the pullbacks of $E,\overline{E}$ , respectively. Then, $D=\text{div}(u_0),\overline{D}=\text{div}(\overline{u}_0)$, and the flag $L\subset V\subset W$ given by 
    \begin{align*}
    \langle u_0^2\overline{u_0}u_1\rangle \subset \langle u_0^2\overline{u_0}u_1,u_0\overline{u_0}u_2\rangle \subset \langle u_0^2\overline{u_0}u_1,u_0\overline{u_0}u_2,u_3\rangle
    \end{align*}
define a point of $G^{'\circ}_{d,(k,\bark)}(C)$. The surjectivity hypotheses translate to the statement that the pairs $(u_0u_1,u_2),(u_0\overline{u_0},u_3),(u_1,\overline{u_0})$ of sections each share no common vanishing points. These ``base-point free'' conditions are in turn equivalent to the statement that the sections comprising $f$ define a morphism in class $\beta$ \cite{cox}.

\begin{proposition}\label{prop:g_x'1_expdim}
    The space $G^{'\circ}_{d,(k,\bark)}(C)$ is generically smooth and pure of dimension $(3d-\bark-2k)-2g-3$, the expected.
\end{proposition}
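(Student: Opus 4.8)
The plan is to prove Proposition \ref{prop:g_x'1_expdim} in exact parallel to the proof of Proposition \ref{prop:g_x2_expdim}, by exhibiting $G^{'\circ}_{d,(k,\bark)}(C)$ as the base of a smooth forgetful map from $\cM^{\ge 4}_\beta(C,X'_1)$ and then invoking Theorem \ref{thm:BN_surface}. The key point is that Lemma \ref{lem:x'1_deg3} guarantees, for $d>1$, that every $f\in\cM^{\ge 4}_\beta(C,X'_1)$ has post-composition with $b':X'_1\to\P^2$ given by three linearly independent sections, so that the association $f\mapsto(W,D,\barD,L\subset V\subset W)$ described just before the Proposition statement is a well-defined morphism $\pi'_2:\cM^{\ge 4}_\beta(C,X'_1)\to G^{'\circ}_{d,(k,\bark)}(C)$.

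First I would verify that $\pi'_2$ is surjective: given a point of $G^{'\circ}_{d,(k,\bark)}(C)$, the three base-point-free conditions in Definition \ref{def:g'} are precisely what is needed for the sections $u_0^2\overline{u_0}u_1,u_0\overline{u_0}u_2,u_3$ to define an honest morphism in class $\beta$, by the toric criterion \cite{cox}. Second, I would compute the relative dimension of $\pi'_2$. The fiber over a fixed ILS-type datum $(W,D,\barD,L\subset V\subset W)$ parametrizes the choices of sections $(u_1,\overline{u_0},u_0,u_2,u_3)$ realizing the flag and divisors, modulo the torus scalings built into the homogeneous coordinates on $X'_1$. Concretely, $\overline{u_0}$ and $u_0$ are defining sections of $\barD$ and $D$ (each a $\C^*$), and $u_1,u_2,u_3$ are basis vectors of the lines $L$, $V/L$, $W/V$ respectively (each a $\C^*$); quotienting by the appropriate torus action of $X'_1$ leaves a free quotient. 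I expect the relative dimension to come out to $3$, so that
\begin{equation*}
    \dim G^{'\circ}_{d,(k,\bark)}(C)=\dim\cM^{\ge 4}_\beta(C,X'_1)-3=(3d-\bark-2k)+2(1-g)-3,
\end{equation*}
matching the claimed expected dimension $(3d-\bark-2k)-2g-3$. The generic smoothness of $G^{'\circ}_{d,(k,\bark)}(C)$ then follows from the generic smoothness of $\cM^{\ge 4}_\beta(C,X'_1)$ supplied by Theorem \ref{thm:BN_surface}, together with the smoothness of $\pi'_2$.

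The main obstacle, and the only place requiring genuine care beyond the $X_2$ case, is the bookkeeping of the relative dimension count, because the flag $L\subset V\subset W$ on $X'_1$ is governed by a \emph{nested} pair of vanishing conditions ($V$ vanishing on $D+\barD$ and $L$ vanishing on $2D+\barD$) rather than the two \emph{independent} subspaces $V_1,V_2$ of the $X_2$ case. I would need to check that the smoothness of $\pi'_2$ is not disturbed by the coincidence $2D+\barD$ appearing in the condition on $L$: the key is that $D$ and $\barD$ have disjoint support is \emph{not} imposed here (unlike on $X_2$), but the separate base-point-free surjectivity hypotheses on $V$, $W/V$, and $L$ ensure that the universal sections deform freely and that $\pi'_2$ remains a smooth fibration of the expected relative dimension. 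Once this is confirmed, the dimension and smoothness assertions follow immediately from Theorem \ref{thm:BN_surface} exactly as in Proposition \ref{prop:g_x2_expdim}.
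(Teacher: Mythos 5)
Your overall strategy is the paper's: exhibit the forgetful map $\pi':\cM^{\ge 4}_\beta(C,X'_1)\to G^{'\circ}_{d,(k,\bark)}(C)$, check it is surjective and smooth, and conclude from Theorem \ref{thm:BN_surface}. However, your relative dimension count is wrong, and the error propagates into a false verification of the final formula. The fiber of $\pi'$ over $(W,D,\barD,L\subset V\subset W)$ parametrizes the tuple $(u_0,\overline{u_0},u_1,u_2,u_3)$ subject to $\mathrm{div}(u_0)=D$, $\mathrm{div}(\overline{u_0})=\barD$, $\langle u_0^2\overline{u_0}u_1\rangle=L$, $\langle u_0^2\overline{u_0}u_1,u_0\overline{u_0}u_2\rangle=V$, $\langle u_0^2\overline{u_0}u_1,u_0\overline{u_0}u_2,u_3\rangle=W$. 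You treated $u_2$ and $u_3$ as determined up to scalar by basis vectors of $V/L$ and $W/V$, but the sections themselves live in $V$ and $W$, not in the quotients: $u_0\overline{u_0}u_2$ ranges over the $2$-dimensional space $V$ minus $L$, and $u_3$ over the $3$-dimensional space $W$ minus $V$. So the parameter count is $1+1+1+2+3=8$, and the torus to quotient by is $\Hom(\Pic(X'_1),\C^*)\cong(\C^*)^3$ (the surface $X'_1$ has Picard rank $3$ and five torus-invariant divisors, matching the five sections), giving relative dimension $8-3=5$, not $3$.

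With the correct relative dimension $5$ one gets
\begin{equation*}
    \dim G^{'\circ}_{d,(k,\bark)}(C)=\bigl((3d-\bark-2k)+2(1-g)\bigr)-5=(3d-\bark-2k)-2g-3,
\end{equation*}
which is the stated answer. Your computation $(3d-\bark-2k)+2(1-g)-3$ actually equals $(3d-\bark-2k)-2g-1$, so the claim in your write-up that it ``matches the claimed expected dimension'' is an arithmetic slip that masks the undercount. As a sanity check, the same fiber analysis for Proposition \ref{prop:g_x2_expdim} gives $1+1+1+2+2=7$ parameters modulo $(\C^*)^3$, i.e.\ relative dimension $4$ as the paper states there, again not matching a naive ``one $\C^*$ per section'' count. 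Once the relative dimension is corrected to $5$, the rest of your argument (surjectivity via the base-point-freeness conditions and \cite{cox}, smoothness of $\pi'$, and the appeal to Theorem \ref{thm:BN_surface}) goes through exactly as in the paper.
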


\begin{proof}
    The association above gives a map $\pi':\cM^{\ge 4}_\beta(C,X'_1)\to G^{'\circ}_{d,(k,\bark)}(C)$, which is surjective and smooth of relative dimension 5. The claim follows from Theorem \ref{thm:BN_surface}.
\end{proof}

\subsection{Conjecture \ref{conj:main} for $\Bl_q(\P^2)$}\label{sec:main_proof}
We return to the setting of complete quasimaps to $X=\Bl_q(\P^2)$. Fix non-negative integers $n_0,n_1,n'_1$ with $n_0+n_1+n'_1=n$, and an effective curve class $\beta=d\H-k\E\in H_2(X,\Z)$. Let $p_1,\ldots,p_n\in C$ be general points. Let $x_1,\ldots,x_{n_0}\in X$ be general points, let $L_{n_0+1},\ldots,L_{n_0+n_1}\subset X$ be general lines in class $\H$, and let $L'_{n_0+n_1+1},\ldots,L'_n\subset X$ be general lines in class $\H-\E$. 

\begin{theorem}\label{main_transversality}
The intersection
\begin{equation*}
    \left(\bigcap_{i=1}^{n_0}\Inc(p_i,x_i)\right) \cap \left(\bigcap_{i=n_0+1}^{n_0+n_1}\Inc(p_i,L_i)\right) \cap \left(\bigcap_{i=n_0+n_1+1}^{n}\Inc(p_i,L'_i)\right) \subset \wtQ_\beta(C,X)
\end{equation*}
is generically smooth and pure of codimension $2n_0+n_1+n'_1$, the expected. Furthermore, every general point of the intersection is contained in the open locus $\cM^\circ_\beta(C,X)\subset \wtQ_\beta(C,X)$.
\end{theorem}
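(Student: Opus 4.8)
The plan is to reduce Theorem \ref{main_transversality} to two statements: (A) the incidence loci meet transversely along the open locus $\cM^\circ_\beta(C,X)$, cutting out there a subvariety of the expected codimension $c := 2n_0+n_1+n'_1$; and (B) the intersection $I$ meets the boundary $\wtQ_\beta(C,X)\setminus\cM^\circ_\beta(C,X)$ in dimension strictly less than $\dim\wtQ_\beta(C,X)-c$. Granting both, every general point of $I$ automatically lies in $\cM^\circ_\beta(C,X)$, and generic smoothness and purity follow from (A). Statement (A) should follow from the generic smoothness of $\cM^\circ_\beta(C,X)$ supplied by Corollary \ref{cor:maps_expdim} and Theorem \ref{thm:BN_surface}, together with a transversality argument for the multi-evaluation map $\ev=(\ev_{p_1},\dots,\ev_{p_n})\colon \cM^\circ_\beta(C,X)\to X^n$: one checks that $\ev$ is generically submersive, so that the preimages of general points $x_i$ and general lines $L_i,L'_i$ meet transversely. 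The real content lies in (B).

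For (B), I would stratify the boundary and bound the dimension of $I$ on each stratum via the twisting technique of Definition \ref{def:twisting}. Given a complete quasimap $u$ in the boundary lying in $I$, apply operations (T1) and (T2) at each base-point until the underlying data defines an honest morphism $u'\colon C\to S$ of strictly smaller anticanonical degree, where $S$ is one of the toric surfaces $S \in \{X, X_2, X'_1\}$ of \S\ref{sec:toric_surface_BN}. Here $X_2$ arises when base-points force a second exceptional divisor disjoint from $E$, while $X'_1$ arises on the exceptional divisor $E_\beta(C,X)$, where the extra section $v\in\P(\cV)$ furnishes the bottom step $L=\langle v\rangle$ of the flag $L\subset V\subset W$ of Definition \ref{def:g'}. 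The Brill-Noether dimension estimates (Corollary \ref{cor:maps_expdim}, Proposition \ref{prop:g_x2_expdim}, Proposition \ref{prop:g_x'1_expdim}) then bound the dimension of the locus of such $u'$, and comparing this with the number of conditions imposed by the $\Inc(p_i,\cdot)$ should force the relevant stratum of $I$ to have dimension $<\dim\wtQ_\beta(C,X)-c$.

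The decisive case is the exceptional divisor $E_\beta(C,X)$, which is where the excess intersection of Example \ref{eg:excess} lives. Using the set-theoretic description of Proposition \ref{prop:inc_exceptional}, for general $x_i$ at most one point-condition can be of type (i) or (ii), since the images $g(x_i)\in\P^1$ are distinct; hence all but at most one point-condition is of type (iii), requiring both that $v$ vanish at $p_i$ as a section of $\cL(-D)$ and that $\phi(\ell_{x_i,\alpha})$ vanish at $p_i$ in $W/v$. The $\phi$-conditions are independent across $i$ precisely because the line $\ell_{x_i,\alpha}\subset\Omega_X(\log E)|_\alpha$ varies non-trivially with $x_i$; this is the mechanism, absent on $Z$ itself, that restores the second condition lost in Example \ref{eg:excess}. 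Meanwhile the conditions forcing $v$ to vanish at many of the $p_i$ are controlled by twisting these vanishings into the flag and invoking Proposition \ref{prop:g_x'1_expdim}. Counting, all $c$ conditions should be imposed on the divisor $E_\beta(C,X)$, giving $\dim(I\cap E_\beta(C,X))\le \dim\wtQ_\beta(C,X)-1-c$.

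The main obstacle will be exactly this bookkeeping on $E_\beta(C,X)$: verifying that the $v$-vanishing conditions, the $\phi$-vanishing conditions, and the genericity of the lines $\ell_{x_i,\alpha}$ jointly impose the full count of $c$ conditions with no unexpected coincidences, and that the twisted-down data $u'$ lands in $X'_1$ in the correct curve class so that Proposition \ref{prop:g_x'1_expdim} applies with the right expected dimension. Extra care is needed when an incidence point $p_i$ coincides with a base-point of $u$ or with a point of $\Supp(D)$, where the twisting must be carried out compatibly with the incidence condition. It is precisely here that the toric Brill-Noether inputs of \S\ref{sec:toric_surface_BN} are indispensable, and the absence of an analogous input is what obstructs the higher-dimensional case.
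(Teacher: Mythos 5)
Your proposal is correct and follows essentially the same route as the paper: stratify $\wtQ_\beta(C,X)$ by bi-rank and by the position of the extra section (resp.\ of $\im(\phi)$) relative to $V$, twist down base-points as in Reduction \ref{red:bpf} with induction on $\beta\cdot K_X^\vee$, control the loci of excess vanishing at the $p_i$ via Propositions \ref{prop:g_x2_expdim} and \ref{prop:g_x'1_expdim}, and use the variation of $\ell_{x,\alpha}$ with $x$ to recover the second condition on the exceptional divisor. The one imprecision is your attribution of the two toric inputs: the relevant dichotomy is whether the image lies in $V$ or not, so the $X_2$-input (via $G^{\circ}_{d,(k_1,k_2)}(C)$) governs both $Q^{(1,2),W}$ and $E^W$, while the $X'_1$-input (via $G^{'\circ}_{d,(k,\bark)}(C)$) governs both $Q^{(1,2),V}$ and $E^V$, rather than a split between the bi-rank $(1,2)$ stratum and the exceptional divisor.
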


That is, Conjecture \ref{conj:main} holds in dimension 2, and in particular, Theorem \ref{thm:main_intro} holds. 

The strategy of proof of Theorem \ref{main_transversality} is as follows. We construct an incidence correspondence varing over the space of all possible choices of $p_i,x_i,L_i,L'_i$, and show that relativized versions of the incidence loci $\Inc(p_i,x_i)$ intersect generically transversely. The same statement for general choices of incidence conditions follows. We obtain this transversality by restricting to bi-rank strata of $\wtQ_\beta(C,X)$ \eqref{Qtilde_strata}, and then to further strata defined by vanishing conditions of sections at the points $p_i$. Transversality along these finer strata is afforded by Propositions \ref{prop:g_x2_expdim} and \ref{prop:g_x'1_expdim}, proven in the previous section.

Let $(C^n)^\circ$ be open subset of the $n$-fold product of $C$ with itself, where no two coordinates are equal. Let $X^\circ=X-E$ be the complement of the exceptional divisor. Let $\bP H^0(X,\H)^\circ$ be the open subset of the linear system of $\H$ consisting of irreducible lines, and let $\bP H^0(X,\H-\E)$ be the linear system of $\H-\E$. We allow the $p_i,x_i,L_i,L'_i$ to vary in these spaces.

Define the product
\begin{equation*}
    \Pi:=(C^n)^\circ \times (X^\circ)^{n_0} \times (\bP H^0(X,\H)^\circ)^{n_1} \times \bP H^0(X,\H-\E)^{n'_1}.
\end{equation*}
Form the global incidence loci $\cI(p_i,x_i),\cI(p_i,L_i),\cI(p_i,L'_i)\subset \wtQ_\beta(C,X)\times\Pi$ in the natural way relativizing $\Inc(p_i,x_i),\Inc(p_i,L_i),\Inc(p_i,L'_i)$, respectively. Define 
\begin{equation*}
    \cI=\left(\bigcap_{i=1}^{n_0}\cI(p_i,x_i)\right) \cap \left(\bigcap_{i=n_0+1}^{n_0+n_1}\cI(p_i,L_i)\right) \cap \left(\bigcap_{i=n_0+n_1+1}^{n}\cI(p_i,L'_i)\right)\subset \wtQ_\beta(C,X)\times\Pi.
\end{equation*}
We will show that $\cI$ is pure of expected codimension. In fact, we will prove that $\cI$ is pure of expected codimension upon restriction to various strata of $\wtQ_\beta(C,X)$, which we now define. 

Consider the poset of locally closed strata
\begin{equation}\label{Qtilde_strata}
\xymatrix{
 & Q^{(2,3)} \ar@{-}[dl] \ar@{-}[d] \ar@{-}[dr] & \\
Q^{(2,2)} \ar@{-}[d] \ar@{-}[dr] & Q^{(1,2),W} \ar@{-}[dl] \ar@{-}[dr] & E^{\circ} \ar@{-}[d] \ar@{-}[dl] \\
Q^{(1,2),V} & E^{V} & E^{W}
}
\end{equation}
where the poset relation is closure inclusion. The strata $Q^{(r_1,r_2)}$ are the strata on $Q^+_\beta(C,X)$, identified with their pre-images in $\wtQ_\beta(C,X)$, of augmented quasimaps $u$ of bi-rank $(r_1,r_2)$. The stratum $Q^{(1,2)}$ is further stratified by the locus $Q^{(1,2),V}$ where $u_3\in V$ and its complement $Q^{(1,2),W}$. The exceptional divisor $E_\beta(C,X)$, consisting of complete quasimaps $((\alpha,v),\phi)$, is stratified by the open locus $E^\circ$ where $\rank(\phi)=2$ and the closed loci $E^V$ (resp. $E^W$) where $\rank(\phi)=1$ and $\im(\phi)=V/v$ (resp. $\im(\phi)\neq V/v$). The open stratum $Q^{(2,3)}\subset \wtQ_\beta(C,X)$ is dense, and the strata appearing in the second and third rows of \eqref{Qtilde_strata} are pure of codimension 1 and 2, respectively.

\begin{proposition}\label{prop:stratum_inc_expdim}
    The restriction of $\cI$ to each stratum $Q\subset \wtQ_\beta(C,X)$ of \eqref{Qtilde_strata} is pure of codimension $2n_0+n_1+n'_1$.
\end{proposition}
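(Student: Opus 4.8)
The plan is to prove that $\dim\cI|_Q=\dim\big(\cI\cap(Q\times\Pi)\big)$ is at most the expected value on each stratum $Q$; the matching lower bound is automatic, since each relativized incidence locus is cut out by (proper transforms of) the degeneracy conditions of \S\ref{sec:incidence_loci}, so every component of $\cI|_Q$ has codimension at most $2n_0+n_1+n'_1$ in $Q\times\Pi$. On the dense stratum $Q^{(2,3)}$ a general point is an honest non-degenerate map $f$, and the count reduces to Proposition \ref{prop:inc_exp_dim} relativized over $\Pi$: each point incidence forces $x_i=f(p_i)$ and imposes codimension $2$, while each line incidence imposes codimension $1$ by Proposition \ref{prop:inc_lines}, which identifies $\Inc(p_i,L_i)$ and $\Inc(p_i,L'_i)$ with honest pullbacks. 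All the work is therefore on the boundary strata, where $u$ is no longer an honest map to $X$, either through base-points or through a drop in bi-rank, the latter encoded on the exceptional divisor by $\phi$.

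For the boundary strata my strategy is to twist down the base-points at the incidence points and absorb any bi-rank degeneration (the datum $\phi$, on the exceptional divisor) into an honest morphism $u':C\to S$, where $S$ is a toric blow-up of $X$, so that the dimension of the locus of possible $u'$ is governed by the Brill-Noether bounds of \S\ref{sec:toric_surface_BN}. When the degeneration occurs along the exceptional divisor $E\subset X$, as on $E_\beta(C,X)$ where $\phi$ lives, one has $S=X'_1$ and the underlying data is a point of $G^{'\circ}_{d',(k',\bark')}(C)$, whose dimension is bounded by Proposition \ref{prop:g_x'1_expdim}; here the flag $L\subset V\subset W$ of Definition \ref{def:g'} is exactly the data $\langle v\rangle\subset V\subset W$ carried by a complete quasimap $((\alpha,v),\phi)$. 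When instead twisting introduces base-points away from $E$, one has $S=X_2$ and the underlying data is a point of $G^{\circ}_{d',(k_1,k_2)}(C)$ (Definition \ref{def:g_x2}), bounded by Proposition \ref{prop:g_x2_expdim}.

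I would then stratify each $Q$ further according to the vanishing orders of $v$ (and, on the exceptional divisor, of the relevant components of $\phi$) at the incidence points $p_i$. On each finer stratum, requiring such a vanishing at $p_i$ contributes to the base-point divisors $D,\barD$ and so lowers the degree of the twisted map $u'$; Propositions \ref{prop:g_x2_expdim} and \ref{prop:g_x'1_expdim} then bound the dimension of the underlying data \emph{exactly}, with no excess. The final count tallies, for each of the $n$ incidence conditions, how many conditions it imposes on the twisted data against how much freedom it leaves in the factor of $\Pi$ parametrizing $x_i$, $L_i$ or $L'_i$: using the set-theoretic description of Proposition \ref{prop:inc_exceptional}, the conditions falling into the ``degenerate'' cases (forcing $v$ to vanish at $p_i$, or constraining $\phi(\ell_{x_i,\alpha})$) are compensated precisely by the drop in moduli dimension, while the remaining conditions pin down the associated points and lines in $\Pi$. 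Summing and maximizing over all ways of distributing the $n$ conditions among the cases yields codimension exactly $2n_0+n_1+n'_1$.

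The hardest part will be the three exceptional-divisor strata $E^\circ$, $E^V$, $E^W$, where the three-case structure of Proposition \ref{prop:inc_exceptional} interacts with the rank of $\phi$ and with whether $\im(\phi)=V/v$. There one must simultaneously control the number of $p_i$ at which $v$ vanishes, the rank-one versus rank-two behaviour of $\phi$, and the extra vanishing of $\phi(\ell_{x_i,\alpha})$ in case (iii), and then verify that \emph{no} distribution of the incidence conditions among cases (i)--(iii) produces an excess-dimensional component. It is precisely here that passing to the blow-up $X'_1$, rather than to $X$ itself, is essential: the parameter $\phi$ has no interpretation as an honest map to $X$, and the sharpness of Proposition \ref{prop:g_x'1_expdim} is what forbids excess intersection on these strata.
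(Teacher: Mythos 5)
Your overall architecture --- bound the dimension from above stratum by stratum, twist away base points, and convert the residual degeneracy into data of an honest map to a toric blow-up of $\P^2$ whose moduli are controlled by Propositions \ref{prop:g_x2_expdim} and \ref{prop:g_x'1_expdim} --- is the paper's, and you have correctly identified both key inputs, including the identification of the flag of Definition \ref{def:g'} with $\langle v\rangle\subset V\subset W$. However, your assignment of strata to the two auxiliary surfaces is wrong, and in a way that would derail the execution. The dichotomy governing which of $G^{\circ}_{d,(k_1,k_2)}(C)$ or $G^{'\circ}_{d,(k,\bark)}(C)$ to use is \emph{not} ``exceptional divisor $E_\beta(C,X)$ versus base-point twisting.'' It is whether the extra degeneracy produces a second $2$-dimensional subspace $V_2\subset W$ distinct from $V$, vanishing together with $v$ on a divisor $D_2$ disjoint from $D$ --- the $X_2$ picture, used on $Q^{(1,2),W}$ \emph{and} on $E^{W}$, where $V_2$ is the preimage in $W$ of $\im(\phi)\neq V/v$ --- or whether $v$ vanishes along a subdivisor $D'\subset D$, producing the flag $\langle v\rangle\subset V\subset W$ with doubled vanishing along $D'$ --- the $X'_1$ picture, used on $Q^{(1,2),V}$ and $E^{V}$. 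This dichotomy cuts across the exceptional divisor of $\wtQ_\beta(C,X)$: on $E^{W}$ the degeneracy that makes $\Inc(p_i,x_i)$ impose only one condition is common vanishing of $v$ and $\im(\phi)$ at points away from $\Supp(D)$, which $G^{'\circ}_{d,(k,\bark)}(C)$ does not parametrize, so your plan as stated has nothing to say about this stratum. Meanwhile $E^{\circ}$ needs no auxiliary surface at all: there each incidence condition imposes two conditions directly, because $\ell_{x,\alpha}$ varies with $x$ and $\pi(u)$ is bpf.

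Second, base-point removal is not a passage to $X_2$. In the paper it is a separate prior reduction (Reduction \ref{red:bpf}): one applies (T1) or (T2) of Definition \ref{def:twisting} to land in $\wtQ_{\beta'}(C,X)$ for the \emph{same} surface $X$ with strictly smaller $\beta'\cdot K_X^\vee$, and invokes the statement inductively, distinguishing carefully whether the twisted point coincides with one of the $p_i$ (the two cases give different fiber dimensions for the twisting map and different numbers of surviving conditions). Your proposal folds this into the general twisting discussion without flagging the induction on the curve class, which is what actually closes the count at base points. A final bookkeeping remark: the paper stratifies by the total degree $k_2$ (resp.\ $k'$) of the intrinsically defined divisor $D_2$ (resp.\ $D'$), shows $Q_{k_2}\subset Q$ has codimension $k_2$ via Proposition \ref{prop:g_x2_expdim}, and then observes that at most $k_2$ of the $p_i$ can lie in $\Supp(D_2)$; stratifying by vanishing at the $p_i$ themselves, as you propose, can be made to work but is not what makes the deficit and the codimension cancel.
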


Fix a stratum $Q\subset \wtQ_\beta(C,X)$, and let $\cJ\subset \cI\cap Q\subset \wtQ_\beta(C,X)$ be an irreducible component. Let $(u,\{p_i\},\{\Lambda_i\})$ be a general point of $\cJ$. We first make two reductions for the proof of Proposition \ref{prop:stratum_inc_expdim}.

\begin{reduction}\label{red:points}
    We may assume that $n_1=n'_1=0$, that is, all incidence conditions are imposed at points $x_i\in X$.    
\end{reduction}

\begin{proof}
    Assume that Proposition \ref{prop:stratum_inc_expdim} holds for $n_1=n'_1=0$. For simplicity of notation, we prove it holds for $(n_1,n'_1)=(1,0)$; the general case is similar (by induction). By assumption, every component of
    \begin{equation*}
        \cI_Q:=Q\cap \left(\bigcap_{i=1}^{n_0}\cI(p_i,x_i)\right)\subset Q\times (C^{n_0})^\circ\times(X^\circ)^{n_0}
    \end{equation*}
    is pure of codimension $2n_0$. The intersection 
    \begin{equation*}
        \cI^1_Q:=Q\cap \left(\bigcap_{i=1}^{n_0}\cI(p_i,x_i)\right)\cap \cI(p_{n_0+1},L_{n_0+1})\subset Q\times (C^{n_0+1})^\circ\times(X^\circ)^{n_0} \times \bP H^0(X,\H)^\circ
    \end{equation*}
    is obtained by pulling back $\cI_Q$, and intersecting with the pullback of $\cI(p_{n_0+1},L_{n_0+1})$ from $Q\times C\times \bP H^0(X,\H)^\circ$. 
    
    However, the restriction
    \begin{equation*}
        \cI(p_{n_0+1},L_{n_0+1})\subset \{(u,\{p_i\},\{x_i\}\} \times C\times \bP H^0(X,\H)^\circ
    \end{equation*}
    to \emph{any} point of $Q\times (C^{n_0})^\circ \times (X^\circ)^{n_0}$ is pure of codimension 1. Indeed, not every pair $(p_{n_0+1},L_{n_0+1})$ has the property that $u\in\Inc(p_{n_0+1},L_{n_0+1})$. Therefore, after removing further diagonals, it follows that $\cI^1_Q$ is pure of codimension $2n_0+1$.
\end{proof}

\begin{reduction}\label{red:bpf}
    Let $\pi(u)\in G_{d,k}(C)$ be the ILS underlying $u$. Then, we may assume that $\pi(u)$ is bpf, see \S\ref{sec:twisting}.
\end{reduction}

\begin{proof}
    We first apply Reduction \ref{red:points}. We will show that if Proposition \ref{prop:stratum_inc_expdim} holds for all smaller values of $\beta\cdot K_X^\vee=3d-k$, and if $\pi(u)$ fails to satisfy one of the properties (BPF1), (BPF2), then $\cJ$ cannot have larger than expected dimension. The reduction will therefore be valid by induction. The proof is similar to that of Proposition \ref{prop:inc_exp_dim}.
    
    Assume for sake of contradiction that $\cJ$ has codimension strictly less than $2n$, and suppose first that $\pi(u)$ fails to satisfy (BPF1) at $p$. Then, we apply operation (T1) of Definition \ref{def:twisting}. The twisting makes sense at the level of the complete quasimap $u$, and may be performed in a neighborhood (in the analytic or \'{e}tale topolgies) $\cJ^\circ$ of $u$. We obtain a map
    \begin{equation*}
        \epsilon:\cJ^\circ\to Q'\times \Pi
    \end{equation*}
    where the stratum $Q'\subset \wtQ_{d\H-(k+1)\E}(C,X)$ is the analogous one to $Q\subset \wtQ_{\beta}(C,X)$.

    First, assume that $p\neq p_1,\ldots,p_n$ generically on $\cJ$, so we may assume (shrinking if necessary) that this is the case everywhere on $\cJ^\circ$. The map $\epsilon$ has finite fibers, corresponding to choices of $p$ in the support of the twisted divisor $D$. Moreover, the image of $\epsilon$ lies in the intersection of the loci $\cI(p_i,x_i)$ in the image\footnote{Strictly speaking, if $Q$ is one of the strata supported in $E_\beta(C,X)$, then we use here the ``if'' direction of Proposition \ref{prop:inc_exceptional}, whose proof we omitted. One can sidestep this by instead viewing the loci $\cI(p_i,x_i)$ as \emph{defined} by the set-theoretic conditions given in Proposition \ref{prop:inc_exceptional}}. By the inductive hypothesis, we have that $\cI$ has codimension $2n$ in $Q'\times \Pi$. Thus, we have
    \begin{equation*}
        \dim(\im(\epsilon))\le \dim(Q'\times \Pi)-2n = \dim(Q\times \Pi)-2n-1< \dim(\cJ^\circ),
    \end{equation*}
    a contradiction.

    If instead we have $p=p_i$ for one (and necessarily only one) $i$, say $i=1$, then $\epsilon$ is injective, but we only have that the image of $\epsilon$ lies in the loci $\cI(p_i,x_i)$ for $i=2,\ldots,n$. By the inductive hypothesis, this imposes $2n-2$ conditions on $Q'\times(C^{n-1})^\circ\times X^{n-1}$. We have one additional, independent condition on the first factor of $C$, namely that $p_1$ lies in the support of the twisted divisor. We therefore have
    \begin{equation*}
        \dim(\im(\epsilon))\le \dim(Q'\times \Pi)-(2n-1) = \dim(Q\times \Pi)-2n< \dim(\cJ^\circ),
    \end{equation*}
    which is again a contradiction.
    
    If $\pi(u)$ fails to satisfy (BPF2) at $p$, then we apply the twisting operation (T2) instead of (T1). We obtain a map $\epsilon:\cJ^\circ\to Q'\times \Pi$, where $Q'\subset \wtQ_{(d-1)\H-(k-1)\E}(C,X)$. If $p\neq p_i$ for all $i$, then the the fibers of $\epsilon$ have dimension 1, corresponding to choices of $p\in C$. On the other hand, the image of $\epsilon$ is contained in each $\cI(p_i,x_i)$, and we obtain a contradiction for dimension reasons.

    If instead $p=p_1$, then $\epsilon$ is injective. The image of $\epsilon$ lies in $\cI(p_i,x_i)$ for $i=2,\ldots,n$, and in addition lies in the codimension 1 locus $\cI(p_1,L'_1)$, where $L'_1\subset X$ is the unique line of class $\H-\E$ through $x_1$. A dimension count yields a contradiction once more.
\end{proof}

\begin{proof}[Proof of Proposition \ref{prop:stratum_inc_expdim}]
    Let $(u,\{p_i\},\{\Lambda_i\})\in \cJ\subset Q\times \Pi$ be a general point of an irreducible component. By Reduction \ref{red:points}, we may assume that the linear spaces $\Lambda_i$ are points $x_1,\ldots,x_n$, so $n_0=n$. By Reduction \ref{red:bpf}, we may restrict to the pullback of the bpf locus $G_{d,k}(C)^\circ$. We divide into three cases for $Q$.

    \underline{Case 1:} $Q\in \{Q^{(2,3)},Q^{(2,2)},E^\circ\}$. We claim that, for any $u\in Q$ with $\pi(u)$ bpf, the subvariety of $C\times X^\circ$ for which $u\in\Inc(p,x)$ has codimension 2. We conclude immediately that $\dim(\cJ)\le \dim(Q)+n=\dim(Q\times\Pi)-2n$.
    
    Indeed, if $Q\in \{Q^{(2,3)},Q^{(2,2)}\}$, then $[u_1(p_i):u_2(p_i)]\in \P^1$ must equal $g(x_i)$; this is one condition on $X$. If $u_0,u_3$ do not both vanish at $p_i$, then we get an additional condition at $X$, namely that $u(p_i)=x_i$. Otherwise, we get the additional condition on $C$ that $p_i$ be such a common vanishing point. If instead $Q=E^\circ$, then write $u=((\alpha,u_1),\phi)$. All three cases of Proposition \ref{prop:inc_exceptional} impose two conditions on $C\times X$; this is obvious in cases (i) and (ii). In case (iii), we have one condition on $p$ from $v$; we claim that $\phi$ imposes an additional condition on $x$. Indeed, because $\pi(u)$ is bpf, the kernel of the post-composition of $\phi$ with evaluation at $p_i$ must equal $\ell_{x,\alpha}$. This imposes a condition on $x$, because the line $\ell_{x,\alpha}$ varies with $x$.

    \underline{Case 2:} $Q\in \{Q^{(1,2),W},E^W\}$. To the complete quasimap $u$, we associate a non-zero section $v\in H^0(C,\cL(-D))$ and a subspace $V_2\subset W$ as follows:
    \begin{itemize}
    \item If $Q=Q^{(1,2),W}$, then write $u=[u_0u_1:u_0u_2:u_3]$, and take any $v\neq0$ in the 1-dimensional span of $u_1,u_2$. Let $V_2=\langle v,u_3\rangle$.
    \item If $Q=E^W$, then take $v$ to be that in the data $u=((\alpha,v),\phi)$. Let $V_2=\im(\phi)$, viewed as a 2-dimensional subspace of $W$ containing $v$. (Note that $\ker(\phi)=g^{*}\Omega_{\P^1}$.)
    \end{itemize}
    We have $V_2\neq V$ in both cases. Define $D_2$ to be the maximal divisor on which $v$ vanishes as a section of $\cL(-D)$ and every element of $V_2$ vanishes as a section of $\cL$. Write $k_2=\deg(D_2)$.

    Let $Q_{k_2}\subset Q$ be the locally closed subset where the construction above produces a divisor $D_2$ of degree exactly $k_2$. We claim that $Q_{k_2}\subset Q$ has codimension $k_2$, the expected. Indeed, for a point $u\in Q_{k_2}$, write $k_1,D_1,V_1$ for $k,D,V$. That $\pi(u)$ is bpf implies that $W,D_j,V_j$ give a point of $G^\circ_{d,(k_1,k_2)}(C)$. We therefore obtain a dominant map $\delta:Q_{k_2}\to G^\circ_{d,(k_1,k_2)}(C)$. Writing $t$ for the codimension of $Q\subset \wtQ_\beta(C,X)$, the relative dimension of $\delta$ is $4-t$. By Proposition \ref{prop:g_x2_expdim}, we have
    \begin{equation*}
        \dim(Q_{k_2})=\dim(G^\circ_{d,(k_1,k_2)}(C))+(4-t)=\dim(G^\circ_{d,k}(C))+(6-t)-k_2=\dim(Q)-k_2,
    \end{equation*}
    as claimed.

    After shrinking, we may view $\cJ$ as the closed subscheme of $Q_{k_2}\times (C^n)^\circ \times (X^\circ)^n$ cut out by the conditions $\cI(p_i,x_i)$. Suppose that $u\in Q_{k_2}$ underlies a point of $\cJ$. For any given $i$, there are two cases. Either $p_i\in D_2$ everywhere on $\cJ$, in which case $u$ imposes one condition on the corresponding factors $C\times X$ (namely that $p_i\in D_2$), or $p_i\notin D_2$ generically. In the latter case, $u$ imposes two conditions on $C\times X$, by a similar analysis to case 1. The first case occurs for at most $k_2$ indices among $i=1,\ldots,n$. It follows that
    \begin{equation*}
        \dim(\cJ) \le \dim(Q_{k_2}\times (C^n)^\circ \times (X^\circ)^n)-(2n-k_2) = \dim(Q\times (C^n)^\circ \times (X^\circ)^n)-2n,
    \end{equation*}
    as needed.

    \underline{Case 3:} $Q\in \{Q^{(1,2),V},E^V\}$. To the complete quasimap $u$, associate $v\in H^0(C,\cL(-D))$ as in Case 2. That is, if $Q=Q^{(1,2),V}$, then $v$ spans $\langle u_1,u_2\rangle$, and if $Q=E^V$, then $v$ is simply the element of $\P(\cV)$ underlying $u$. Let $D'\subset C$ be the (scheme-theoretic) intersection of the vanishing locus of $v\in H^0(C,\cL(-D))$ with $D$, let $k'$ be its degree, and define $\overline{D}=D-D'$. Define the flag $L=\langle v\rangle\subset V\subset W$.

    Let $Q_{k'}\subset Q$ be the locally closed subset where $D'$ has degree exactly $k'$. We claim that $Q_{k'}\subset Q$ has codimension $k'$, the expected. Indeed, the above construction gives rise to a dominant map $\delta:Q_{k'}\to G^{'\circ}_{d,(k',k-k')}(C)$ of relative dimension 3. (The divisor $D'$ becomes the divisor called $D$ in Definition \ref{def:g'}.) By Proposition \ref{prop:g_x'1_expdim}, we have
    \begin{equation*}
        \dim(Q_{k'})=\dim(G^{'\circ}_{d,(k',k-k')}(C))+3=\dim(G^\circ_{d,k}(C))+4-k'=\dim(Q)-k'.
    \end{equation*}

    Viewing, after shrinking, $\cJ\subset Q_{k'}\times (C^n)^\circ \times (X^\circ)^n$ as cut out by the conditions $\cI(p_i,x_i)$, a point $u\in Q_{k'}$ underlying a general point of $\cJ$ imposes one condition on $(p_i,x_i)\in C\times X$ if $p_i\in\overline{D}$ and two otherwise. It follows that
    \begin{equation*}
        \dim(\cJ) \le \dim(Q_{k'}\times (C^n)^\circ \times (X^\circ)^n)-(2n-k') = \dim(Q\times (C^n)^\circ \times (X^\circ)^n)-2n.
    \end{equation*}
\end{proof}

\begin{proof}[Proof of Theorem \ref{main_transversality}]
Every component of $\cI$ has codimension at most the expected in $\wtQ_\beta(C,X)$. By Proposition \ref{prop:stratum_inc_expdim}, every component $\cJ\subset \cI$ is contained generically in $Q^{(2,3)}$ and is of expected dimension. By Reduction \ref{red:bpf}, a general point $u\in \cJ$ has $\pi(u)\in G_{d,k}(C)^\circ$. It follows that $u\in\cM^\circ_\beta(C,X)$ as long as $u_0\neq0$. However, repeating the argument in the proof of Proposition \ref{prop:stratum_inc_expdim} shows that $\cI$ again has expected dimension upon restriction to divisor on $Q^{(2,3)}$ where $u_0=0$.

We also have that $\cI$ is generically smooth. Indeed, by Corollary \ref{cor:maps_expdim}, $\cM^\circ_\beta(C,X)$ is generically smooth, and for any $u\in \cM^\circ_\beta(C,X)$, the locus of $(p_i,x_i)\in C\times X$ for which $u\in\Inc(p_i,x_i)$ is isomorphic to $C$. Theorem \ref{main_transversality} follows by restricting $\cI$ to general $p_i,\Lambda_i$.
\end{proof}

If $2n_0+n_1+n'_1=\dim(\wtQ_\beta(C,X))$, then Theorem \ref{main_transversality} shows that the integral
\begin{equation}\label{tev_integral_dim2}
    \int_{\wtQ_\beta(C,X)}[\Inc(p,x)]^{n_0}[\Inc(p,L)]^{n_1}[\Inc(p,L')]^{n'_1}
\end{equation}
counts the number of non-degenerate maps $f:C\to X$ in class $\beta$ with $f(p_i)=x_i$ for $i\in[1,n_0]$, $f(p_i)\in L_i$ for $i\in[n_0+1,n_0+n_1]$, and $f(p_i)\in L'_i$ for $i\in[n_0+n_1+1,n]$. 

These integrals directly computable. It is immediate from their descriptions as degeneracy loci on $Q^+_\beta(C,X)$ that
\begin{align*}
[\Inc^+(p,L)]=h_{Q^+},&\quad[\Inc^+(p,L')]=h_{\cV^2}- \eta,\\
[\Inc^+(p,x)]&=h_{Q^+}(h_{\cV^2}- \eta).
\end{align*}
Here, $h_{Q^+},h_{\cV^2}$ denote the relative hyperplane classes on the projective bundles $Q^+,\P(\cV^2)$, respectively, and $\eta\in H^2(\Sym^k(C))$ is the class of locus $\{D|D-p\ge 0\}$. By Proposition \ref{prop:inc_lines}, the classes $[\Inc(p,L)],[\Inc(p,L')]$ are simply the pullbacks of the respective classes from $Q^+$. One can compute moreover that
\begin{equation*}
[\Inc(p,x)]=h_{Q^+}(h_{\cV^2}-\eta)- j_{*}(h_{\cV}-\eta),
\end{equation*}
where $j:E_\beta(C,X)\hookrightarrow \wtQ_\beta(C,X)$ is the inclusion, and $h_\cV$ is the pullback of the relative hyperplane class from $\P(\cV)$. Therefore, \eqref{tev_integral_dim2} is reduced to a tautological integral on $G^{2,0}_{d,k}(C)$, which is computable from the degeneracy locus description of $G^{2,0}_{d,k}(C)$.

In practice, the degeneration methods of \cite{lian_pr} may be more robust in computing $\Tev^{\Bl_q(\P^2)}_{g,n,\beta}$, even more so in higher dimension. We defer the computation to future work, though we mention the relationship to our earlier work on $\Tev^{\Bl_q(\P^r)}_{g,n,\beta}$ in a ``large degree'' regime in \S\ref{sec:other_spaces}.

\section{Remarks toward higher dimension}\label{sec:higher_dim}

In this section, we discuss, without proofs, how to extend the ideas of the previous two sections toward proving Conjecture \ref{conj:main} for all blow-ups $X=\Bl_{\P^s}(\P^r)$, highlighting in the end what is missing.

\subsection{Complete bi-collineations}

The objects underlying complete quasimaps to $\P^r$ \cite{lian_pr} are complete collineations \cite{vainsencher, thaddeus}. The objects underlying complete quasimaps to  $X=\Bl_{\P^s}(\P^r)$ are \emph{complete bi-collineations}.

Fix two inclusions $V\subset W$ and $U_1\subset U_2$ of vector spaces. We assume for simplicity that $\dim(U_1)=\dim(V)$ and $\dim(U_2)=\dim(W)$. The main example of interest is when  $V\subset W$ underlies an ILS in $G^{r,r-s-1}_{d,k}(C)$, and $U_1\subset U_2$ is the inclusion $H^0(X,\H-\E)\subset H^0(X,\H)$ induced by a choice of defining section $1_E$ (which is fixed).

\begin{definition}\label{def:bi-coll}
    A \emph{complete bi-collineation} $\phi:(U_1\subset U_2)\to (V\subset W)$ is a collection $\phi^0,\phi^1,\ldots,\phi^\ell$ of commutative squares of linear maps
    \begin{equation*}
        \xymatrix{
        U_1^j \ar[r]^{\phi^j_1} \ar[d] & V^j \ar[d]\\
        U_2^j \ar[r]^{\phi^j_2} & W^j 
        }
    \end{equation*}
    such that:
    \begin{itemize}
        \item for $j=0$, the vertical arrows are given by the given inclusions $U_1\subset U_2$ and $V\subset W$,
        \item for $j>0$, the vertical arrow $U_1^j\to U_2^j$ is given by the map $\ker(\phi^{j-1}_1)\to \ker(\phi^{j-1}_2)$ induced by $\phi^{j-1}$, and the vertical arrow $V^j\to W^j$  is given by the map $\rm{coker}(\phi^{j-1}_1)\to \rm{coker}(\phi^{j-1}_2)$ induced by $\phi^{j-1}$,
        \item for all $j$, the maps $\phi^j_1,\phi^j_2$ are not both zero, and
        \item for $j=\ell$, both maps $\phi^\ell_1,\phi^\ell_2$ are of full rank.
    \end{itemize}
Two complete bi-collineations $\phi,\phi'$ are considered equivalent if, for all $j$, the square $\phi'_j$ is obtained from $\phi_j$ by multiplying both horizontal maps $\phi^j_i$ by a scalar $t_j\in \C^*$.

The integer $\ell$ is referred to as the \emph{length} of the complete bi-collineation.
\end{definition}
Complete bi-collineations are a natural extension of complete collineations \cite{vainsencher,thaddeus}, which are sequences of single linear maps $\phi^j:U^j\to V^j$, with $U^j=\ker(\phi^{j-1}),V^j=\coker(\phi^{j-1})$. We will use the term ``bi-collineation'' without the modifier ``complete,'' for brevity.

The sequence $\phi^j$ of squares in a bi-collineation is guaranteed to terminate, exactly once a pair of full rank maps $\phi^\ell_1,\phi^\ell_2$ (not both zero) is reached. Indeed, the quantity 
\begin{equation*}
    \dim(U_1^j)+\dim(U_2^j)=\dim(V^j)+\dim(W^j)
\end{equation*}
is strictly decreasing in $j$. The vertical arrows, by definition, are determined at each step: the only interesting data are the horizontal arrows. We only label the arrows which are not already determined. The vertical maps $U_1^j\to U_2^j$ remain injective for all $j$, but the same is not necessarily true of $V^j\to W^j$.

We describe the points of $\wtQ_\beta(C,X)$ by describing the fibers of the iterated blow-up $b_Q:\wtQ_\beta(C,X)\to Q^+_\beta(C,X)$. Fix an augmented quasimap $u\in Q^+_\beta(C,X)$, and let $V\subset W$ the underlying ILS. Write $U_1\subset U_2$ for the inclusion $H^0(X,\H-\E)\subset H^0(X,\H)$ induced by $1_E$. The sections underlying $u_j$ give rise to a commutative square
    \begin{equation}\label{Q+_square}
        \xymatrix{
        U_1 \ar[r]^{\phi^+_1} \ar[d] & V \ar[d]^{\lambda}\\
        U_2 \ar[r]^{\phi^+_2} & W
        }
    \end{equation}
where the vertical arrow $U_1\to U_2$ is the given inclusion, but the vertical arrow $\lambda:V\to W$ is a \emph{scalar multiple}, possibly zero, of the given inclusion. 

Indeed, $\phi_1^+$ sends $y_j\in U_1$ (\S\ref{sec:maps}) to $u_j\in V$ for $j=1,\ldots,r-s$. The map $\lambda\in\mathbb{C}$ is such that $u_0=\lambda\cdot 1_D$, and $\phi_2^+$ sends $1_Ey_j\in U_2$ to $u_0u_j\in W$ for $j=0,\ldots,r-s$ and $y_j$ to $u_j$ for $j=r-s+1,\ldots,r+1$. Accordingly, $u$ determines $\phi_1^+,\lambda,\phi_2^+$ only up to the $(\C^*)^2$-action given in Data \ref{data:map_to_X}. Fix a choice of square \eqref{Q+_square}.

Let $U_1^\perp=\langle y_{r-s+1},\ldots,y_{r+1}\rangle\subset U_2$ be the complement to $U_1\subset U_2$ spanned by torus-invariant sections cutting out divisors of class $\H$. Given a square \eqref{Q+_square}, define
\begin{equation*}
    \phi^0_2:U_2=U_1\oplus U_1^{\perp}\to W
\end{equation*}
by sum of the composition of $\phi^+_1:U_1\to V$ with the inclusion $V\subset W$, and the restriction of $\phi^+_2$ to $U_1^{\perp}$. Write also $\phi^0_1=\phi^+_1$. We obtain a commutative square
\begin{equation}\label{Q+_modified_square}
    \xymatrix{
    U_1 \ar[r]^{\phi_1^0} \ar[d] & V \ar[d] \\
    U_2 \ar[r]^{\phi_2^0} & W
    }
\end{equation}
where the vertical arrow $V\to W$ is now the given inclusion (and hence unlabelled). The map $\phi_2^0$ depends on the choice of square \eqref{Q+_square}, but the resulting description of complete quasimaps below is isomorphic given a different choice. 

\begin{ingredient}\label{prop:wtQ_points}
    Every blow-up center of $b_Q:\wtQ_\beta(C,X)\to Q^+_\beta(C,X)$ is smooth over $G^{r,r-s-1}_{d,k}(C)$. Let $u\in Q^+_\beta(C,X)$ be a point, to which we associate the squares \eqref{Q+_square} and \eqref{Q+_modified_square}. Then, a point in the fiber $b_Q^{-1}(u)$ is given by the data of a bi-collineation $\phi:(U_1\subset U_2)\to  (V\subset W)$, where $\phi_0$ is the square \eqref{Q+_modified_square}.
\end{ingredient}

The above is given as an ``Ingredient'' toward Conjecture \ref{conj:main} to indicate that it deserves a careful proof, but we do not provide one here.

The bi-collineation $\phi=\{\phi_j\}_{j=0}^{\ell}$ has length $\ell=0$ if and only if the underlying complete quasimap $u$ has bi-rank $(r-s,r+1)$. The data of $\phi^1$ can be identified with a normal vector to the stratum of augmented quasimaps of bi-rank $(\rank(\phi_1^0),\rank(\phi_2^0))$, by the the same calculation as in Proposition \ref{normal_bundle_canonical}. The further data $\phi_2,\phi_3,\ldots$ comprising $\phi$ can be shown to arise from the iterated blow-ups, as in \cite[\S 2]{vainsencher}. More precisely, the bi-collineation $\phi$ arises from the blow-ups of proper transforms of the strata of bi-ranks
\begin{equation*}
    \left(\sum_{i=0}^{j-1}\rank(\phi^j_1),\sum_{i=0}^{j-1}\rank(\phi^j_2)\right),j=0,\ldots,\ell-1.
\end{equation*}
The data of $\phi^j$ is identified with a normal vector to the $j$-th blow-up center.

\begin{example}
    Take $X=\Bl_{q}(\P^2)$. For augmented quasimaps of bi-ranks $(2,2)$, $(1,2)$, and $(1,1)$, respectively, the squares $\phi^1$ take the following forms (from left to right):
    \begin{equation*}
        \xymatrix{
        0 \ar[r]^{\phi^1_1} \ar[d] & 0 \ar[d] \\
     U_1^\perp \ar[r]^{\phi^1_2} & W/V
        }
        \hspace{0.5in}
        \xymatrix{
        U_1^1 \ar[r]^{\phi^1_1} \ar[d]_{\sim} & V^1 \ar[d] \\
        U_2^1 \ar[r]^{\phi^1_2} & W^1
        }
        \hspace{0.5in}
                \xymatrix{
        g^{*}\Omega_{\P^1} \ar[r]^(0.55){\phi^1_1} \ar[d] & V/v \ar[d] \\
        \Omega_X(\log E)|_\alpha \ar[r]^(0.55){\phi^1_2} & W/v
        }
    \end{equation*}
    In the bi-rank $(2,2)$ case, $\phi^1_2$ must be a non-zero map between vector spaces of dimension 1, so $\phi^1$ adds no data; accordingly, the $(2,2)$-stratum is already a divisor on $Q^+_\beta(C,X)$. 
    
    In the bi-rank $(1,2)$ case, all four vector spaces that appear have dimension 1. $\phi^1_1$ determines $\phi^1_2$, so must be non-zero. On the locus $Q^{(1,2),W}$ where $\rm{im}(\phi_2^0)\neq V$, the induced map $V^1\to W^1$ is non-zero, so $\phi^1_2$ is also non-zero. On the locus $Q^{(1,2),V}$ where $\rm{im}(\phi_2^0)=V$, the induced map $V^1\to W^1$ is zero, so $\phi^1_2=0$. Thus, we have one additional square 
        \begin{equation*}
        \xymatrix{
        0 \ar[r]^{\phi^2_1} \ar[d] & 0 \ar[d] \\
      U_2^1=U_2^2 \ar[r]^{\phi^2_2} & W^2=W^1
        }
        \end{equation*}
        where $\phi^2_2\neq 0$. In both cases, the data of $\phi^j$ for $j>0$ is determined by $\phi^0$; indeed, the bi-rank $(1,2)$ stratum is again a divisor. $\phi$ has length 2 over $Q^{(1,2),V}$ because $Q^{(1,2),V}\subset \overline{Q^{2,2}}$.

    Finally, in the bi-rank $(1,1)$ case, we recover Proposition \ref{normal_bundle_canonical}. Indeed, at a point $(\alpha,v)\in X\times\P(\cV)$ of the bi-rank $(1,1)$ locus of $Q^+_\beta(C,X)$ (Proposition \ref{prop:Z_as_product}), $(U_1^1\subset U_2^{1})$ is identified by construction with $(g^{*}\Omega_{\P^1}\subset \Omega_X(\log E)|_\alpha )$, and $\phi^1_2$ determines $\phi^1_1$. If $\phi^1_2$ does not have full rank, then the further squares $\phi^j$ are determined up to equivalence.
\end{example}

\subsection{Incidence loci and Conjecture \ref{conj:main}}

We next describe the points of $\Inc(p,\Lambda)\subset\wtQ_\beta(C,X)$ in terms of the bi-collineations of Ingredient \ref{prop:wtQ_points}. We adopt the definitions of \S\ref{sec:incidence_loci}. Consider the \emph{dual} of the diagram \eqref{linear_spaces} defining $\Lambda$:
\begin{equation}\label{linear_spaces_dual}
\xymatrix{
   (\C^{(r-s-1)-\ell_1})^\vee \ar[d]_{\rho^\vee} \ar[r]^(0.4){\theta_1^{\vee}} &  H^0(X,\cO_X(\H-\E))=U_1 \ar[d]_{1_E} \\
   (\C^{r-\ell_2})^\vee \ar[r]^(0.4){\theta_2^\vee} & H^0(X,\cO_X(\H))=U_2
    }
\end{equation}
Write $S_1=(\C^{(r-s-1)-\ell_1})^\vee$ and $S_2=(\C^{r-\ell_2})^\vee$, so that $S_1=S_2\cap U_1\subset U_2$.

Working in the setting of Ingredient \ref{prop:wtQ_points}, fix an augmented quasimap $u\in \wtQ_\beta(C,X)$ and an associated square \eqref{Q+_square}. In particular, we identify $\lambda\in\mathbb{C}$ as the constant multiple of the given inclusion $V\subset W$ underlying the map $\lambda:V\to W$. Consider the map
\begin{equation*}
    (\lambda,1):U_1\oplus U_1^\perp \to U_1\oplus U_1^{\perp}=U_2
\end{equation*}
which multiplies by $\lambda$ in the first factor and is the identity in the second. 

Define
\begin{equation*}
    S_2^0=\langle(\lambda,1)^{-1}(S_2) , (S_1 \oplus 0)\rangle\subset U_1\oplus U_1^{\perp}.
\end{equation*}
Concretely, if $S_2\subset U_1\oplus U_1^{\perp}$ is spanned $(r-s-1)-\ell_1$ vectors in the coordinates underlying $U_1$ and $(s+1+\ell_1)-\ell_2$ additional vectors in all coordinates, then pulling back by $(\lambda,1)$ has the effect of multiplying the coefficients of coordinates underlying $U_1$ by $\lambda$. Adding $(S_1 \oplus 0)$ does nothing if $\lambda\neq0$, but if $\lambda=0$, then adding $(S_1\oplus 0)$ restores the vectors in the $U_1$-coordinates that vanish upon multiplying by $\lambda$. Write also $S_1^0=S_1\subset U_1$. By construction, we have $S_1^0=S_2^0\cap U_1$. 

Viewing $V\subset H^0(C,\cL(-D))$, let $V(-p)=H^0(C,\cL(-D-p))\cap V$ to be the subspace of sections vanishing at $p$, which is either a hyperplane or all of $V$. Define similarly $W(-p)=H^0(C,\cL(-p))\cap W\subset H^0(C,\cL)$. 

Let $\phi\in b_Q^{-1}(u)$ be a bi-collineation as in Ingredient \ref{prop:wtQ_points}. For $j=0,\ldots,\ell$, and $i=1,2$, define:
\begin{itemize}
    \item $S_i^j=S_i^0\cap U_i^j$, for $i=1,2$, and 
    \item $V^j(-p)=\im(V(-p))\subset V^j$ and $W^j(-p)=\im(W(-p))\subset W^j$.
\end{itemize}
Consider the following two conditions on $\phi^j$ (cf. \cite[Definition 3.8]{lian_pr}):
\begin{enumerate}
    \item[$(\dagger)^j_1$] $\phi^j_1(S_1^j)\subset V^j(-p)$,
    \item[$(\dagger)^j_2$] $\phi^j_2(S_2^j)\subset W^j(-p)$.
\end{enumerate}

\begin{ingredient}\label{prop:inc_set_theoretic_general}
    We have $\phi\in \Inc(p,\Lambda)$ if and only if, for all $j=0,\ldots,\ell$, and for both $i=1,2$, at least one of the following is true:
    \begin{enumerate}
        \item $(\dagger)^i_j$ holds, or 
        \item the intersection $S_i^j=S_i\cap U_i^j$ is not transverse in $U_i$, that is, we have
        \begin{equation*}
    \dim(S_i\cap U_i^j)>\max(\dim(S_i)+\dim(U_i^j)-\dim(U_i),0)
\end{equation*} 
\end{enumerate}
(Different conditions between (1) and (2) may hold for different values of $i,j$.)
\end{ingredient}
When $j=0$, the intersection $S_i=S_i\cap U_i$ is trivially transverse for $i=1,2$, so $(\dagger)^0_1$ and $(\dagger)^0_2$ must both be satisfied. This is equivalent to $u\in \Inc^+(p,\Lambda)$. Tracing through Proposition \ref{prop:inc_exceptional} and its proof yields exactly the condition of Ingredient \ref{prop:inc_set_theoretic_general} in the case $X=\Bl_q(\P^2)$. In the case where the underlying augmented quasimap has bi-rank $(1,1)$ and $S_2^1=S_2^0\cap U_2^1$ has dimension 1, $S_2^1$ plays the role of the line $\ell_{x,\alpha}$ in Proposition \ref{prop:inc_exceptional}.

Ingredient \ref{prop:inc_set_theoretic_general} is entirely analogous to \cite[Proposition 3.9]{lian_pr}. In fact, adopting the notation of \cite{lian_pr}, Ingredient \ref{prop:inc_set_theoretic_general} asserts that $\phi\in \Inc(p,\Lambda)$ if and only if the complete collineation given by the maps $\phi^j_1$ (after removing redundant maps) lies in $\Inc(S_1,V(-p))\subset\Coll(U_1,V)$ and the complete collineation determined by the maps $\phi^j_2$ lies in $\Inc(S_2,W(-p))\subset\Coll(U_2,W)$. In order to verify if $\phi\in \Inc(p,\Lambda)$ in practice, one only has to check the condition $(\dagger)^i_j$ for at most one value of $j$ (for each $i=1,2$), see \cite[Lemma 3.10]{lian_pr}.

From here, one can hope to follow the same strategy of \cite[\S 4]{lian_pr} and \S\ref{sec:dim2_proof} of this work to prove Conjecture \ref{conj:main}. Namely, in the natural incidence correspondence, complete bi-collineations $\phi$ should impose the expected number of conditions on pairs $(p_i,\Lambda_i)$ if $\phi\in\Inc (p_i,\Lambda_i)$. However, this is only true if the images of the morphisms $\phi^j_i$ do not have ``unexpected vanishing'' along the points $p_i$. In the proof of Theorem \ref{main_transversality} in dimension 2, we have used (in cases 2 and 3) in a crucial way that the loci $Q_{k_2},Q_{k'}$ where the underlying ILS contain sections vanishing along many of the $p_i$ have expected codimension. 

For complete quasimaps to $\P^r$ (for $r$ arbitarary), the analogous statement is essentially afforded by the \emph{pointed} Brill-Noether theorem \cite[Lemma 4.4]{lian_pr}, in which Schubert conditions are imposed on linear series. However, an analog in the setting of ILS here is missing. We state the question imprecisely:
\begin{question}\label{question:pointed_BN_secant}
    Let $p_1,\ldots,p_n\in C$ be general points. Do loci on $G^{r,r-s-1}_{d,k}(C)$ of ILS containing flags of sections vanishing along sub-divisors of $p_1+\cdots+p_n$ have expected codimension?
\end{question}
In the same way that the needed transversality in dimension 2 came ultimately from Theorem \ref{thm:BN_surface} for the toric surfaces $X_2$ (Proposition \ref{prop:g_x2_expdim}) and $X'_1$ (Proposition \ref{prop:g_x'1_expdim}), it is plausible that an affirmative answer to Question \ref{question:pointed_BN_secant} could come from a Brill-Noether theorem for maps from curves to higher-dimensional toric varieties, and thereby provide the missing tool toward a proof of Conjecture \ref{conj:main}. We have raised the question of whether there could be such a Brill-Noether theorem in \cite[Question 1.5]{cl_bn_surface}.

\section{Relation to other moduli spaces}\label{sec:other_spaces}

In this section, we match, in a certain ``large $\beta$'' range, intersections on the moduli spaces of augmented quasimaps $Q^+=Q^+_\beta(C,\Bl_{\P^s}(\P^r))$ (and complete quasimaps $\wtQ_\beta(C,\Bl_{\P^s}(\P^r))$) to those on two other moduli spaces to which it maps. Set, as in \S\ref{sec:moduli_spaces}, $X=\Bl_{\P^s}(\P^r)$, and write $\beta=d\H^\vee+k\E^\vee$, where $d\ge k\ge 0$. For simplicity, we only deal with incidence conditions at points $x\in X$.

\subsection{Quasimaps}\label{sec:quasimaps_comparison}

Let $\barQ=\barQ_\beta(C,X)$ be the Ciocan-Fontanine--Kim space of quasimaps, \S\ref{sec:quasimaps}, and let $\psi:Q^+\to \barQ$ be the forgetful map defined in \S\ref{sec:augmented}.

Let $x\in X$ be a point not lying on the exceptional divisor, which we view as cut out by $r-s-1$ equations in $\alpha_1,\ldots,\alpha_{r-s}$ and $s+1$ additional equations in $\alpha_0\alpha_1,\ldots,\alpha_{r+1}$, as in \S\ref{sec:incidence_loci}. Let $p\in C$ be a point. Then, we may define $\overline{\Inc}(p,x)\subset \barQ$ by the vanishing of the corresponding linear combinations of the universal sections $\overline{u}_i$, in a parallel way to the definition of $\Inc^+(p,x)\subset Q^+$ by \eqref{Q+_degen1} and \eqref{Q+_degen2}. 

Namely, we have maps
\begin{equation}\label{Q-bar_degen1}
    \cO_{C\times \barQ}\to \nu_{*}(\barP_{d}\otimes\barP_{k}^{-1})^{r-s} \to \nu_{*}((\barP_{d}\otimes\barP_{k}^{-1})|_p)^{r-s-1}.
\end{equation}
and
\begin{equation}\label{Q-bar_degen2}
    \cO_{C\times \barQ}\to \nu_{*}(\barP_k\otimes(\barP_d\otimes\barP_k^{-1}))^{r-s}\oplus \nu_{*}(\barP_{d})^{s+1}\to \nu_{*}(\barP_d|_p)^{s+1}
\end{equation}
coming from the universal sections
\begin{align*}
    \overline{u}_1,\ldots,\overline{u}_{r-s}:&\cO_{C\times\barQ}\to \barP_d\otimes\barP_k^{-1},\\
    \overline{u}_0\overline{u}_1,\ldots,\overline{u}_{r+1}:&\cO_{C\times\barQ}\to \barP_d
\end{align*}
and the appropriate equations in these sections. We define the subscheme $\overline{\Inc}(p,x)\subset \barQ$ by the vanishing of \eqref{Q-bar_degen1} and \eqref{Q-bar_degen2}. By construction, the degeneracy loci of \eqref{Q-bar_degen1} and \eqref{Q-bar_degen2} pull back to \eqref{Q+_degen1} and \eqref{Q+_degen2}, respectively, so we have $\psi^{-1}\overline{\Inc}(p,x)=\Inc^+(p,x)$ as subschemes of $Q^+$.

In contrast to $Q^+$, the space of quasimaps $\barQ$ may have components of larger than expected dimension (necessarily not dominated by $Q^+$), see Example \ref{eg:bad_components}. More drastically, it may be the case that $\barQ$ contains components whose generic point has $\cL_k$ not effective, in which case $u_0=0$ everywhere. On the pathological components, the incidence loci $\overline{\Inc}(p,x)$ may also have smaller than expected codimension. The defining maps \eqref{Q-bar_degen1} and \eqref{Q-bar_degen2} nevertheless gives rise to a natural virtual class
\begin{equation*}
    [\overline{\Inc}(p,x)]^{\vir}=c_1((\barP_{d}\otimes\barP_{k}^{-1})|_p)^{r-s-1}\cdot c_1((\barP_d|_p))^{s+1} \in H^{2r}(\barQ)
\end{equation*}
which pulls back to $[\Inc^+(p,x)]\in H^{2r}(Q^+)$. One can make sense of virtual integrals 
\begin{equation*}
    \int_{[\barQ]^{\vir}}([\overline{\Inc}(p,x)]^{\vir})^n,
\end{equation*}
against the virtual fundamental class of $\barQ$, but there is no obvious relationship in general to intersection numbers on $Q^+$.

\subsection{Naive quasimaps}\label{sec:naive_quasimaps}

We have previously given a partial computation of $\Tev^{\Bl_q(\P^r)}_{g,n,\beta}$ in \cite{cl2} on a certain moduli space $\P$, whose construction we recall. From the point of view of the present work, one could view $\P$ as a moduli space of ``naive quasimaps,'' though this term was not used in \cite{cl2}. While the setting of the previous work was for blow-ups of projective spaces at points, the construction works equally well for blow-ups at linear spaces of any dimension.

We assume, as in \cite{cl2}, that $d-k>2g-1$, though the construction can be suitably adapted otherwise. In this case, we define $\P\to\Sym^k(C) \times \Pic^d(C)$ to be the projective bundle whose fiber over $(\cL,D)$ is $\P( H^0(\cL(-D))^{r-s}\oplus H^0(C,\cL)^{s+1})$. More precisely, let $\nu:C \times  \Sym^k(C) \times \Pic^d(C) \to \Sym^k(C) \times \Pic^d(C)$ be the projection, and as usual let $\cP_d,\cD$ denote a Poincar\'{e} line bundle (trivialized along a point) and a universal divisor. Then, we define $\P=\P(\cE)$, where
\begin{equation*}
\mathcal{E}=\nu_*(\mathcal{P}_d(-\cD))^{r-s} \oplus \nu_*(\mathcal{P}_d)^{s+1}
\end{equation*}
is a vector bundle on $\Sym^k(C)\times \Pic^d(C)$. Then, $\P$ is of expected dimension and a general point corresponds to a map $f:C\to X$ in class $\beta$.

We will denote the sections underlying a point of $\P$ by $f_1,\ldots,f_{r-s}\in H^0(\cL(-D))$ and $f_{r-s+1},\ldots,f_{r+1}\in H^0(C,\cL)$. There is a map $\delta:Q^+\to\P$ given by remembering $\cL,D$, and the tuple of sections 
\begin{equation*}
    [f_1:\cdots:f_{r+1}]=[u_0 u_1: \cdots, u_0u_{r-s}:u_{r-s+1}:\cdots:u_{r+1}].
\end{equation*}
More precisely, the push-forward of
\begin{equation*}
(u_0 u_1,\ldots , u_0 u_r , u_{r+1}): \cO_{C \times Q^+} \to \left(\cO_{Q^+}(1) \otimes (\mathcal{P}_d(-\mathcal{D}))^{r-s}\right) \oplus \left(\cO_{Q^+}(1) \otimes (\mathcal{P}_d)^{s+1}\right)
\end{equation*}
under $\nu:C \times Q^+ \to Q^+$ yields the injection
\begin{equation}\label{eqn: map for rho}
\cO_{Q^+}(-1) \hookrightarrow \nu_*(\mathcal{P}_d(-\mathcal{D}))^{r-s} \oplus \nu_*(\mathcal{P}_d)^{s+1}
\end{equation}
which defines $\delta$. Tautologically, we have $\delta^*(\cO_{\P}(1))=\cO_{Q^+}(1)$. Under $\delta$, the sections $u_0u_1,\ldots,u_0u_{r-s}$ coming from $Q^+$ are viewed as elements of $H^0(C,\cL(-D))$ after twisting back down by $1_D$, but this only recovers the $(r-s)$-tuple $(u_1,\ldots,u_{r-s})$ if $u_0\neq0$. 

Once more, let $p\in C$ and $x\in X$ be points, where we again view $x\notin E$ as cut out by linear equations in the coordinates $\alpha_j$ underlying $X$. Then, we define the subscheme $V(p,x)\subset \P$ by the vanishing of the corresponding $r-s-1$ sections in $H^0(C,\cL(-D))$ and $s+1$ sections in $H^0(C,\cL)$ at $p$. That is, $x\in X$ gives rise to a tautological map
\begin{equation*}
    \cO_{\P}(-1)\to \cE\to \nu_*(\mathcal{P}_d(-\cD)|_p)^{r-s-1} \oplus \nu_*(\mathcal{P}_d|_p)^{s+1}
\end{equation*}
whose degeneracy locus is $V(p,x)$, generically parametrizing the locus where $f(p)=x$. 
The associated cycle class is
\begin{equation*}
    [V(p,x)]=(c_1(\cO_{\P}(1))-\eta)^{r-s-1}\cdot c_1(\cO_{\P}(1))^{s+1}\in H^{2r}(\P),
\end{equation*}
where $\eta\in H^2(\Sym^k(C))$ is the class of the locus $\{D|D-p\ge0\}$.

The pullback $\delta^{-1}V(p,x)$ is \emph{not} equal to $\Inc^+(p,x)$. Indeed, whereas the incidence locus $\Inc^+(p,x)\subset Q^+$ imposes $r-s-1$ linear conditions on $u_1,\ldots,u_{r-s}$, the pullback $\delta^{-1}V(p,x)$ instead imposes the same equations on
\begin{equation*}
   \frac{u_0u_1}{1_D},\ldots,\frac{u_0u_{r-s}}{1_D},
\end{equation*}
which only recovers the same conditions if $u_0\neq0$. Therefore, $\delta^{-1}V(p,x)$ is given by the union of $\Inc^+(p,x)$ and an additional subscheme supported on the locus $\cD_0\subset Q^+$ where $u_0=0$. We see on the level of cycle classes that
\begin{equation*}
    \delta^{*}(c_1(\cO_{\P}(1))-\eta)=c_1(\cO_{Q^+}(1))-\eta=c_1(\cO_{\P(\cV^{r-s})}(1))+[\cD_0]-\eta,
\end{equation*}
which differs from the corresponding factor in 
\begin{equation*}
    [\Inc^+(p,x)]=(c_1(\cO_{\P(\cV^{r-s})}(1))-\eta)^{r-s-1}\cdot c_1(\cO_{Q^+}(1))^{s+1}\in H^{2r}(Q^+)
\end{equation*}
by the term $[\cD_0]$.

\subsection{The enumerative range}

We have the following diagram of moduli spaces:
\begin{equation*}
    \xymatrix{ 
    & Q^+ \ar[ld]_{\psi} \ar[rd]^{\delta} & \\
    \barQ & & \P
    }
\end{equation*}
We work in the setting of Definition \ref{def:tev} for $X=\Bl_{\P^s}(\P^r)$. The three quantities
\begin{equation}\label{3_integrals}
   \int_{[\barQ]^\vir}\prod_{i=1}^n[\overline{\Inc}(p_i,x_i)]^{\vir},\quad\int_{Q^+}\prod_{i=1}^n[\Inc^+(p_i,x_i)],\quad\int_{\P}\prod_{i=1}^n[V(p_i,x_i)].
\end{equation}
are intersection numbers of compactified loci of maps where $f(p_i)=x_i$, but in general have no immediate relationship. As explained in \S\ref{sec:quasimaps_comparison}, the first integrand pulls back to the second, but there is no reason to expect that $\psi_{*}[Q^+]=[\barQ]^\vir$, as $\psi$ may not be dominant. As explained in \S\ref{sec:naive_quasimaps}, the pullback $\delta^{-1}V(p_i,x_i)$ is in general strictly larger than $\Inc^+(p_i,x_i)$.

The pathologies go away if the loci where $u_0\neq0$ are avoided. Let $\barQ^{\star}\subset\barQ$ and $Q^{+\star}\subset Q^+$ be the open loci of (augmented) quasimaps where $u_0\neq 0$. Similarly, let $\P^{\star}\subset\P$ be the open locus where $f_1,\ldots,f_{r-s}$ are not all zero. Assume, as we have in the definition of $\P$, that $d-k>2g-1$. Assume further that $Q^+$ is non-empty, or equivalently that
\begin{align*}
\nonumber    d-g+1&\ge r+1,\\
    (d-k)-g+1&\ge r-s.
\end{align*}
 ($Q^+$ is always non-empty in cases of enumerative interest to us.) Then, $\barQ^\star,Q^{+\star},\P^{\star}$ are all smooth and irreducible of expected dimension, and the restrictions of the morphisms $\psi,\delta$ to $Q^{+\star}$ are birational, and surject onto $\barQ^{\star},\P^{\star}$, respectively. In fact, we have an isomorphism $\iota:\barQ^{\star}\to\P^{\star}$ defined by 
  \begin{equation*}
     \iota(u)=[u_0u_1:\cdots:u_0u_{r-s}:u_{r-s+1}:\cdots:u_{r+1}]
 \end{equation*}
 such that $\iota\circ \psi=\delta$. Upon restriction to these open subsets, $V(p,x)$ pulls back under $\iota,\delta$ to precisely $\overline{\Inc}(p,x),\Inc^+(p,x)$, respectively.

The basic observation is that, if $n>d$, then the intersections in question lie in the open loci $\barQ^\star,Q^{+\star},\P^{\star}$.

\begin{lemma}\label{intersections_match}
    Suppose that $n>d$ (which in particular implies $d-k>2g-1$) and that $Q^+$ is non-empty. Let $p_1,\ldots,p_n\in C$ and $x_1,\ldots,x_n\in X$ be general points, and assume \eqref{dim_constraint}. Then, we have
    \begin{equation*}
         \bigcap_{i=1}^n\overline{\Inc}(p_i,x_i) \subset \barQ^{\star},\quad \bigcap_{i=1}^n\Inc^+(p_i,x_i) \subset Q^{+\star},\quad\bigcap_{i=1}^n V(p_i,x_i)\subset \P^{\star}.
    \end{equation*}
    
    In particular, these intersections pull back to each other under $\psi,\delta,\iota$, and we have
    \begin{equation*}
        \int_{[\barQ]^\vir}\prod_{i=1}^n[\overline{\Inc}(p_i,x_i)]^{\vir}=\int_{Q^+}\prod_{i=1}^n[\Inc^+(p,x)]=\int_{\P}\prod_{i=1}^n[V(p,x)].
    \end{equation*}
\end{lemma}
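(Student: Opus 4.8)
The plan is to first establish the three containments, which carry the real content, and then deduce the equality of integrals formally from the structure of the maps on the open ``star'' loci. The key point is that, because every relevant intersection avoids the locus $\{u_0=0\}$, all three computations take place where $\psi,\delta,\iota$ identify the spaces.

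The heart of the argument is the claim that each intersection avoids the bad locus $\{u_0=0\}$ (for $\P$, the locus $\{f_1=\cdots=f_{r-s}=0\}$). I would argue directly in terms of the underlying sections, so that a single computation applies verbatim to all three spaces. Fix a point on the bad locus, so that $u_0u_1=\cdots=u_0u_{r-s}=0$. Since $x_i\notin E$ and $b^{-1}(P)=E$, the point $b(x_i)\in\P^r$ lies off the center $P=\{Y_1=\cdots=Y_{r-s}=0\}$, so some coordinate $Y_j$ with $1\le j\le r-s$ is nonzero at $b(x_i)$. The $r-s-1$ equations of \eqref{Q+_degen1} (equivalently \eqref{Q-bar_degen1}) cutting out $g(x_i)$ are then automatically satisfied, while the $s+1$ equations of \eqref{Q+_degen2} (equivalently \eqref{Q-bar_degen2}) may be taken to pair this nonzero $Y_j$ against the last $s+1$ coordinates $Y_{r-s+1},\ldots,Y_{r+1}$. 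On the bad locus $Y_j$ evaluates to $0$, so each such equation degenerates to the vanishing at $p_i$ of one of the sections $u_{r-s+1},\ldots,u_{r+1}$. Thus the single incidence condition at $p_i$ already forces all of $u_{r-s+1},\ldots,u_{r+1}$ to vanish at $p_i$. The delicate step is precisely this bookkeeping — correctly seeing that the incidence condition collapses to the full vanishing of the last $s+1$ sections on the bad locus — and this is where I would be most careful.

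With that in hand the hypothesis $n>d$ closes the argument immediately: intersecting all $n$ incidence conditions forces $u_{r-s+1},\ldots,u_{r+1}\in H^0(C,\cL(-p_1-\cdots-p_n))$, sections of a line bundle of degree $d-n<0$, hence all zero. Together with $u_0=0$ (resp. $f_1=\cdots=f_{r-s}=0$) this makes every section vanish, contradicting (NZ2) (resp. the projective-bundle condition defining $\P$). Hence each intersection lies in the corresponding star locus. As a consistency check I would also note that $\cD_0=\{u_0=0\}=Q^+-Q^{+\star}=\delta^{-1}(\P-\P^\star)$, and since $\Inc^+(p_i,x_i)\subseteq\delta^{-1}V(p_i,x_i)$ by \S\ref{sec:naive_quasimaps}, the containment for $\P$ re-derives $\bigcap_i\Inc^+(p_i,x_i)\subset Q^{+\star}$.

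Finally, to match the integrals I would localize each intersection number to the star loci and transport it through the maps. On $Q^{+\star}$ one has $[\Inc^+(p_i,x_i)]=\delta^*[V(p_i,x_i)]$ and $[\Inc^+(p_i,x_i)]=\psi^*[\overline{\Inc}(p_i,x_i)]^{\vir}$, the correction terms being supported on $\cD_0$. Because the set-theoretic intersections lie in the star loci, the refined intersection products defining the three numbers live in $A_0$ of these opens; applying the projection formula to the birational surjections $\delta\colon Q^{+\star}\to\P^\star$ and $\psi\colon Q^{+\star}\to\barQ^\star$ (with $\delta_*[Q^{+\star}]=[\P^\star]$ and $\psi_*[Q^{+\star}]=[\barQ^\star]$, both being degree one) then identifies the three $0$-cycle degrees. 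The one non-formal input is that $[\barQ]^{\vir}$ restricts to the ordinary fundamental class $[\barQ^\star]$ on the smooth open $\barQ^\star$ of expected dimension — the standard compatibility of virtual classes with smooth opens of the correct dimension — which allows the virtual integral over $\barQ$ to be computed on $\barQ^\star$ and compared through the isomorphism $\iota\colon\barQ^\star\to\P^\star$.
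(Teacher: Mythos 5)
Your proposal is correct and follows essentially the same route as the paper: the containments are proved by observing that on the locus $u_0=0$ (resp.\ $f_1=\cdots=f_{r-s}=0$) each incidence condition collapses to the vanishing of $u_{r-s+1},\ldots,u_{r+1}$ at $p_i$, so $n>d$ forces these degree-$d$ sections to vanish identically, contradicting (NZ2); the equality of integrals then follows by localizing to the star loci, where $\psi,\delta,\iota$ are birational/isomorphisms, the integrands agree under pullback (the $[\cD_0]$ corrections being supported away from the intersections), and $[\barQ]^{\vir}$ restricts to the ordinary fundamental class on the smooth open $\barQ^\star$. Your more explicit verification that the $s+1$ residual equations degenerate to the full vanishing of the last $s+1$ sections is a welcome expansion of a step the paper states in one sentence, but it is not a different argument.
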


\begin{proof}
A point of $\barQ$ or $Q^+$ with $u_0=0$ lying in the intersection in question would need to have $u_{r-s+1},\ldots,u_{r+1}$ vanish at all of $p_1,\ldots,p_n$. If $n>d$, then this requires all of $u_0,u_{r-s+1},\ldots,u_{r+1}$ to be identically zero, which is not allowed. Similarly, a point of $\P$ with $f_1=\cdots=f_{r-s}=0$ lying in the intersection in question would need to have $f_{r-s+1},\ldots,f_{r+1}$ vanish at all of $p_1,\ldots,p_n$, which is impossible for the same reason. This proves the first claim, as well as the compatibility of the intersections under pullback.

The equality of integrals on $Q^+,\P$ also follows immediately, because $\delta$ is a birational map of irreducible varieties, and the integrands are related by pullback up to a class supported on a subscheme of $Q^+$ disjoint from the intersection. The integrals on $\barQ$ and $\P$ also match, because the integrands are represented by intersections occurring entirely in the isomorphic open subsets $\barQ^{\star}$ and $\P^{\star}$, and given by cycle classes which are related by pullback. The virtual fundamental class $[\barQ]^\vir$ is needed in order for the integral on $\barQ$ to make sense globally, but is simply the usual fundamental class on $\barQ^{\star}$, where the intersection lies.
\end{proof}

In fact, when $n>d+1$, all three intersections enumerate $\Tev^X_{g,n,\beta}$, so Lemma \ref{intersections_match} merely asserts an equality of enumerative counts of curves. We continue to assume that non-degenerate maps $f:C\to X$ exist, so in particular, $d\ge r$ and $n>r+1$.

\begin{proposition}\label{tev_P}
    Suppose that $n>d+1$. Then, the intersection $\bigcap_{i=1}^n V(p_i,x_i)\subset\P$ is transverse and consists precisely of the points enumerated in $\Tev^X_{g,n,\beta}$. In particular, the same is true of all three integrals appearing in Lemma \ref{intersections_match}.
\end{proposition}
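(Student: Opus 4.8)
The plan is to exploit the fact that, in this range, the naive quasimap space $\P$ is \emph{already} smooth, so that neither a blow-up nor a Brill--Noether input is needed. Since $n>d+1>d$, Lemma \ref{intersections_match} applies and in particular gives $d-k>2g-1$; hence $\nu_*(\cP_d(-\cD))$ and $\nu_*(\cP_d)$ are honest vector bundles and $\P=\P(\cE)$ is a projective bundle over the smooth, irreducible variety $\Sym^k(C)\times\Pic^d(C)$, so $\P$ is smooth and irreducible of dimension $rn$. The locus $\cM^\circ_\beta(C,X)\subset\P$ of non-degenerate honest maps is cut out by the open conditions (BPF1), (BPF2) and non-degeneracy, so it is a smooth open subvariety, of dimension $rn$ (a nonempty open of the irreducible $\P$). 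I would reduce the entire statement to the single containment $I:=\bigcap_{i=1}^n V(p_i,x_i)\subset\cM^\circ_\beta(C,X)$; once this is known, transversality and the enumerative count follow formally from generic smoothness.

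First I would prove this containment. By Lemma \ref{intersections_match} (which needs only $n>d$), $I$ lies in the open locus $\P^\star$ where $f_1,\dots,f_{r-s}$ are not all zero. It then remains to check that no point of $I$ meets the remaining strata of $\P\setminus\cM^\circ_\beta(C,X)$: where the sections acquire a base-point, where $f_{r-s+1},\dots,f_{r+1}$ vanish identically (the map into $E$), or where the image is linearly degenerate in $\P^r$. I would rule these out by the relative dimension count of Proposition \ref{prop:inc_exp_dim}, relativizing the $V(p_i,x_i)$ over $\Pi=(C^n)^\circ\times(X^\circ)^n$ and bounding the restriction of the universal intersection to each stratum. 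For instance, the locus $Z\subset\P$ where $f_1,\dots,f_{r-s}$ share a common zero $q$ has codimension $r-s-1$; if $q\neq p_i$ for all $i$ the incidence conditions still cut the full $rn$, forcing $\dim(Z\cap I)\le rn-(r-s-1)-rn=s+1-r<0$ since $s\le r-2$, while the case $q=p_i$ (where $V(p_i,x_i)$ degenerates to $s+1$ conditions but $Z$ gains codimension) is handled identically. The failures of (BPF2), where $f_{r-s+1},\dots,f_{r+1}$ vanish at a point of $\Supp(D)$, are counted the same way. Crucially, the locus of maps into $E$ is excluded precisely because $x_i\in X^\circ$ lies off $E$, so there the $s+1$ conditions remain nontrivial. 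I expect this uniform dimension count — the exact analog, in the large-degree regime, of the Brill--Noether input supplied in dimension $2$ by Propositions \ref{prop:g_x2_expdim} and \ref{prop:g_x'1_expdim} — to be the main obstacle, the delicate point being that after twisting down base-points as in Reduction \ref{red:bpf}, the incidence conditions continue to impose the expected codimension for general $x_i$.

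Granting $I\subset\cM^\circ_\beta(C,X)$, each point of $I$ is an honest map $f$ with $f(p_i)=x_i$; since $n\ge r+1$, the general points $b(x_i)\in\P^r$ are in linearly general position, and as $b(f(C))$ passes through all of them it cannot lie in a hyperplane, so $f$ is automatically non-degenerate (consistent with the containment). On the smooth locus $\cM^\circ_\beta(C,X)$ the degeneracy scheme $V(p_i,x_i)$ agrees with $\ev_{p_i}^{-1}(x_i)$, so $I=\ev^{-1}(x_1,\dots,x_n)$ for the evaluation morphism $\ev\colon\cM^\circ_\beta(C,X)\to X^n$ between smooth varieties of the same dimension $rn$. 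Finally I would invoke generic smoothness in characteristic $0$: for general $x_\bullet$ the fiber $\ev^{-1}(x_\bullet)$ is reduced and $0$-dimensional (or empty, when $\ev$ is not dominant and no such maps exist), which is exactly the asserted transversality, and its points are precisely the non-degenerate maps enumerated by $\Tev^X_{g,n,\beta}$. The corresponding statement for the remaining two integrals is then immediate from Lemma \ref{intersections_match}.
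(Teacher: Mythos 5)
Your framework is sound: in this range $\P$ is indeed a smooth, irreducible projective bundle, the reduction of the whole statement to the containment $I\subset\cM^\circ_\beta(C,X)$ is the right move, and the final passage via generic smoothness of the evaluation map is essentially what the paper does at the end. The gap is in the middle step, and you have in effect flagged it yourself: your stratum-by-stratum count assumes that ``the incidence conditions still cut the full $rn$'' on each bad stratum, but that is exactly the assertion that needs proof, and it fails as a naive codimension count. The dangerous strata are not only the base-point loci you list but the loci where the sections $f_1,\ldots,f_{r+1}$ --- equivalently, the twisted-down map --- become linearly degenerate: on the rank-one locus of Example \ref{eg:excess} each $V(p_i,x_i)$ imposes only one condition instead of $r$, and a degenerate twisted-down map can move in a family of larger than expected dimension (Example \ref{eg:bad_components}), so the expected codimension cannot be read off from Proposition \ref{prop:inc_exp_dim}, whose proof controls $Q^+$ rather than the deeper degeneracies visible in $\P$ after twisting. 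This is precisely the obstruction (Question \ref{question:pointed_BN_secant}) that blocks Conjecture \ref{conj:main} in higher dimension, so it cannot be waved through.

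The paper closes this gap with a specific argument that is absent from your proposal and is where the hypothesis $n>d+1$ (rather than $n>d$) is actually used. One twists down base-points as you suggest; the key observations are: (a) if (T1) is applied at some marked point $p_i$ then (T2) must also be applied there, because $x_i\notin E$ forces $f_{r-s+1},\ldots,f_{r+1}$ to vanish at $p_i$ whenever $f_1,\ldots,f_{r-s}$ do; hence each marked point consumed by twisting drops $d$ by one, and at most $\ell\le n-r-1$ of the $p_i$ can be consumed, since otherwise $b\circ\overline f$ would be a curve of degree at most $n-2-\ell\le r-2$ through $n-\ell$ general points of $\P^r$, which is impossible; (b) therefore $\overline f(p_i)=x_i$ for at least $r+1$ of the general $x_i$, forcing the twisted map $\overline f$ to be non-degenerate; (c) only then does Corollary \ref{cor:maps_expdim} guarantee that $\overline f$ moves in a family of the expected dimension, so the count of conditions versus moduli closes and shows that $f$ was bpf and non-degenerate to begin with. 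Without steps (b)--(c) your dimension count has no mechanism for excluding the degenerate twisted-down families; with them, your concluding generic-smoothness argument goes through as written.
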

\begin{proof}
    Let $f \in \bigcap_{i=1}^n V(p_i,x_i)$ be a point. We perform the twisting procedure of \cite[\S 3.2.1]{cl2} to the sections underlying $f$ to produce a map $\overline{f}:C\to X$ in class $\beta'$, in a simplified form. Consider the operations, parallel to those in Definition \ref{def:twisting}:
    \begin{enumerate}
        \item[(T1)] If $f_1,\ldots,f_{r-s}\in H^0(C,\cL(-D))$ all vanish at $p$, then replace $D$ with $D+p$ and twist $f_1,\ldots,f_{r-s}$ down.
        \item[(T2)] If $p\in\Supp(D)$ and $f_{r-s+1},\ldots,f_{r+1}\in H^0(C,\cL)$ all vanish at $p$, then replace $\cL$ with $\cL(-p)$, replace $D$ with $D-p_i$ and twist all sections accordingly.
    \end{enumerate}

    Suppose that $f$ does not already define a map $C\to X$ in class $\beta$, that is, $f$ is not already bpf. Then, the operations (T1) and (T2) may be applied, in that order, repeatedly until it does. As above, the hypothesis that $n>d+1$ guarantees that not all of $f_1,\ldots,f_{r-s}$ are identically zero. As in the proof of Proposition \ref{prop:inc_exp_dim}, we track the number of linear conditions imposed on $f$, as well as the dimension of the family in which $f$ moves, at every step. Unless $f$ was already bpf, the \emph{expected} number of moduli in the end will be strictly less than the number of linear conditions imposed on $f$, cf. \cite[Lemma 49]{cl2}. We claim that, after the twisting is completed, the resulting map $\overline{f}:C\to X$ is non-degenerate, so by Corollary \ref{cor:maps_expdim}, the expected number of moduli for $f$ is the actual number. The usual incidence correspondence argument shows that $f$ was bpf to begin with, and the transversality claim for $\P$ also follows.

    We now prove the claim that $\overline{f}$ is non-degenerate. First, observe that if (T1) is applied at one of the points $p_i$, for $i=1,\ldots,n$, then (T2) must be applied at $p_i$ afterward. Indeed, in order for $f\in V(p_i,x_i)$, if $f_1,\ldots,f_{r-s}$ vanish at $p_i$, then so do $f_{r-s+1},\ldots,f_{r+1}$.

    Next, let $\ell$ be the number of $p_i$ among $p_1,\ldots,p_n$ at which (T2) is applied. We claim that $\ell\le n-r-1$. Note that (T2) decreases the value of $d$ by 1, and neither operation increases $d$, so $b\circ \overline{f}:C\to\P^r$ has degree at most $d-\ell$. On the other hand, of the remaining $n-\ell$ points $p_i$, \emph{neither} (T1) nor (T2) could have been applied, so it must be the case that $\overline{f}(p_i)=x_i$ for these $p_i$. The image $b(\overline{f}(C))\subset\P^r$ must therefore be a curve of degree at most $d-\ell \le n-2-\ell$ passing through $n-\ell$ general points. If $\ell\ge n-r$, then we obtain a contradiction, because every curve of degree at most $n-2-\ell\le r-2$ in $\P^r$ lies in a linear space of dimension $n-2-\ell$, which cannot contain $n-\ell$ general points.

    Therefore, for at least $r+1$ of the points $p_i$, it is the case that $\overline{f}(p_i)=x_i$. Because the $x_i$ are assumed general, it follows that $\overline{f}$ is non-degenerate, because $r+1$ general points in $\P^r$ lie in in no common hyperplane. As explained above, Corollary \ref{cor:maps_expdim} combined with the usual incidence correspondence argument shows that the intersection number on $\P$ is enumerative.

    We immediately deduce the same transversality for the intersection $\bigcap_{i=1}^n\overline{\Inc}(p_i,x_i)\subset\barQ$ by pulling back by $\iota$. Furthermore, if $n>d+1$, then we see from taking $\ell=0$ in the argument above that $\bigcap_{i=1}^n V(p_i,x_i)$ consists of \emph{non-degenerate} maps, and therefore pulls back under $\delta$ to a transverse intersection on $Q^+$ enumerating non-degenerate maps in class $\beta$ as well.
\end{proof}

Because the intersection $\bigcap_{i=1}^n\Inc^+(p_i,x_i)\subset Q^+$ takes place in the non-degenerate locus, we also obtain the same transversality of incidence loci on the space of \emph{complete} quasimaps to $X$ in the range $n>d+1$. The inequality $n>d+1$ is equivalent to $d>(r-s-1)k+rg$, so this is a transversality statement for ``large'' curve classes $\beta$, in line with the asymptotic enumerativity phenomena for stable maps studied in \cite{lian_pand,bllrst}.

When $s=0$, the integrals $\int_{\P}\prod_{i=1}^n[V(p,x)]$ have been computed explicitly in \cite[\S 3.4]{cl2}\footnote{The computation should be no different when $s>0$.}, and shown to enumerate $\Tev^{X}_{g,n,\beta}$ when $n\ge d+g+1$. The improvement to the bound $n>d+1$ comes from the application of Corollary \ref{cor:maps_expdim} in place of the weaker \cite[Proposition 13]{lian_pand}. The fact that, when $n>d+1$, the intersection of incidence loci $\overline{\Inc}(p_i,x_i)$ on $\barQ$ is transverse and occurs away from the boundary (and in particular computes $\Tev^{X}_{g,n,\beta}$), may be viewed as an asymptotic enumerativity statement for virtual invariants on $\barQ$.

There seems to be no reason that the integrals appearing in Lemma \ref{intersections_match} should agree outside the range $n>d$, when excess intersections begin to disagree on the level of schemes. The only space on which one can reasonably expect tautological intersections to compute enumerative curve counts  $\Tev^{X}_{g,n,\beta}$ for \emph{all} $n,\beta$ is the space of complete quasimaps.

\bibliographystyle{alpha} 
\bibliography{bib}

\end{document}